\documentclass[12pt]{amsart}

\usepackage{amssymb}
\usepackage{amsmath}
\numberwithin{equation}{section}
\numberwithin{figure}{section}
%\numberwithin{section}{part}
\setcounter{tocdepth}{1}
\usepackage{float}

\usepackage{color}
\usepackage{graphicx}
\usepackage{diagbox}
\usepackage{array}

\setlength{\textwidth}{15cm}
\setlength{\oddsidemargin}{.3in}
\setlength{\evensidemargin}{.3in}

\DeclareMathOperator{\sgn}{sgn}
\DeclareMathOperator{\im}{im}
\usepackage[vcentermath,enableskew]{youngtab}
\usepackage{ytableau}
\usepackage{amssymb}
\usepackage{rotating}
\usepackage{young}
\usepackage{enumerate}
\usepackage{hyperref}

\renewcommand{\L}{\cal L}
\newcommand{\no}{\noindent}

\newcommand{\be}{\begin{equation}} 
\newcommand{\bea}{\begin{eqnarray}}
\newcommand{\ee}{\end{equation}}
\newcommand{\beas}{\begin{eqnarray*}}
\newcommand{\eea}{\end{eqnarray}}
\newcommand{\eeas}{\end{eqnarray*}}
\newcommand{\non}{\nonumber}
\newcommand{\cal}{\mathcal}
\newcommand{\lie}{\mathrm{Lie}}

  \newcommand\subsetsim{\,\,\,\small{\mathrel{\substack{
  \textstyle\subset\\[-0.2ex]\textstyle\sim}}}\,\,\,}

\def\C{{\mathbb C}}

\def\z3{{\mathbb Z_3}}

\def\L{{\cal L}}

\def\sg{\mathfrak S}

\newcommand\mdoubleplus{\mathbin{+\mkern-10mu+}}
\def\conc{\mdoubleplus}
%\numberwithin{equation}{section}

\newtheorem{theorem}{Theorem}[section]
\newtheorem{definition}[theorem]{Definition}

\newtheorem{corollary}[theorem]{Corollary}
\newtheorem{conjecture}[theorem]{Conjecture}
\newtheorem{lemma}[theorem]{Lemma}
\newtheorem{proposition}[theorem]{Proposition}
\newtheorem{remark}[theorem]{Remark}
\newtheorem{ex}[theorem]{Example}

%\newtheorem{notation}{Notation}[section]
%\newtheorem{corollary}{Corollary}[section]

%\normalsize

\title[$n$-ary generalization of the Lie representation]{On an $n$-ary generalization of the Lie representation  and tree Specht modules}
\author[Friedmann]{Tamar Friedmann}
\address{Department of Mathematics, Colby College}
\email{tfriedma@colby.edu}

\author[Hanlon]{Phil Hanlon}
\address{Department of Mathematics, Dartmouth College}
\email{philip.j.hanlon@dartmouth.edu}

\author[Wachs]{Michelle L. Wachs$^2$}
\address{Department of Mathematics, University of Miami}
\email{wachs@math.miami.edu}
\thanks{$^{2}$Supported in part by NSF grants DMS 1502606 and DMS 2207337}

\begin{document}

\maketitle

\begin{center}
\dedicatory{\em Dedicated to the memory of Adriano Garsia.}
\end{center}

\begin{abstract} We continue our study, initiated in our prior work with Richard Stanley, of the representation of the symmetric group on the multilinear component of an $n$-ary generalization of the free Lie algebra  known as the free Filippov $n$-algebra with $k$ brackets.   Our ultimate aim is to determine the multiplicities of the irreducible representations in this representation.  This had been done for the ordinary Lie representation ($n=2$ case) by Kraskiewicz and Weyman.  The $k=2$ case was handled in our prior work, where the representation was shown to be isomorphic to $S^{2^{n-1}1}$. In this paper, for general $n$ and $k$, we obtain decomposition results that enable us  to determine the multiplicities in the $k=3$ and $k=4$ cases.  In particular we prove that in the $k=3$ case, the representation is isomorphic to $S^{3^{n-1}1} \oplus  S^{3^{n-2}21^2}$.  Our main result shows that  the multiplicities stabilize in a certain sense when $n$ exceeds $k$. As an important tool in proving this, we present  two types of  generalizations of the notion of Specht module that involve trees.
\end{abstract}

\tableofcontents
\section{Introduction} \label{intro}

There are various generalizations of the notion of Lie algebra that involve replacing the binary Lie bracket with an $n$-ary  bracket. One of these was introduced by Filippov  in \cite{Fi} and another was introduced by Hanlon and Wachs in \cite{HW}.  
Both generalizations are of interest in 
elementary particle physics, see e.g. \cite{CZ, DI, Fr, Ji}.
For both the Filippov generalization and the Hanlon-Wachs generalization, the representation of the symmetric group on the multilinear component of the free $n$-ary Lie algebra generalizes an extensively studied representation known as the Lie representation; see  \cite{Ba, BS, Ga, HS, Kl, KW, Re, Wa} for some earlier work on the Lie representation and \cite{Su1, Su2} for  some more recent work.   The representation for the  Hanlon-Wachs generalization  was first studied in \cite{HW}. More recently, a study of  the representation for the  Filippov generalization was initiated   by Friedmann, Hanlon, Stanley, and Wachs in \cite{FHSW1}. 
In this paper, we continue this study, and in particular, answer a question raised in \cite[Question 4.1]{FHSW1} in the affirmative.

Throughout this paper,  all vector spaces are taken over  a   field  of characteristic $0$, say $\C$.  Fix $n \ge 2$.
A {\em Filippov algebra} is a vector space $\L$ equipped with an $n$-linear  map (called a bracket)
\[ [\cdot, \cdot , \hskip .7cm , \cdot] : \times ^n \L \rightarrow \L ~\]
that satisfies the following antisymmetry relation for all $\sigma$ in the symmetric group $\sg_n$:
\begin{equation} \label{antsym} [x_1,\dots,x_n] = \sgn(\sigma) [x_{\sigma(1)},\dots, x_{\sigma(n)}] \end{equation}
and the following generalization of the Jacobi identity:
\begin{align}
\label{type1} &[[x_1, x_2, \ldots,  x_n], x_{n+1},\ldots , x_{2n-1}]
\\ \non&=\sum_{i=1}^{n} [x_1, x_{2}, \ldots , x_{i-1},[ x_{i}, x_{n+1}, \ldots , x_{2n-1}], x_{i+1}, \ldots , x_{n}],
\end{align}
for $x_i \in \L$.  When we need to specify $n$, we say Filippov $n$-algebra.  Clearly  Filippov $2$-algebras are the same as ordinary Lie algebras.
In \cite{FHSW1}, 
Filippov $n$-algebras are referred to as {\em Lie algebras of the nth kind} (LAnKe's for short, see \cite{Fr}).

As defined in \cite{FHSW1}, the {\em free Filippov $n$-algebra} (or {\em free LAnKe}) on a set $X= \{ x_1,x_2, \dots,x_m\}$ is a vector space generated by all possible $n$-ary bracketings of elements of $X$, where the only
possible relations existing among these bracketings are consequences
of $n$-linearity of the bracketing, the antisymmetry of the bracketing  (\ref{antsym}) and the generalized Jacobi identity  (\ref{type1}).

The multilinear component of the free Filippov $n$-algebra on $X$ is the subspace spanned by $n$-bracketed permutations of $X$.  Every such $n$-bracketed permutation on $X$ has the same number of brackets, which depends on $m$, the number of generators.  Indeed, if the number of brackets is $k$ then   $m=(n-1)k +1$.

The object we studied in \cite{FHSW1} and continue to study in this paper is 
the representation of the symmetric group $\sg_{(n-1)k +1}$ on the
multilinear component of the free Filippov $n$-algebra on $(n-1)k +1$ generators. 
We denote this representation by $\rho_{n,k}$ and note that $\rho_{2,k}$ is the classical Lie representation $\lie_{k+1}$. It follows from the antisymmetry of the bracket that when $k=1$, $\rho_{n,k}$ is the sign representation $\sgn_n$, which is the same as the Specht module $S^{1^n}$. A result of Kraskiewicz and Weyman \cite{KW}  on $\lie_{m}$ gives  the decomposition of $\rho_{n,k}$ into irreducibles when $n=2$.  Recall that the {\it major index} of a standard Young tableau $T$ is the number of entries $j $ of $T$ for which  $j+1$ is in a lower row  of $T$ than $j$.

\begin{theorem}[Kraskiewicz and Weyman \cite{KW}] \label{liek}
 For each partition $\lambda \vdash m \ge 2$, the multiplicity of the Specht module $S^\lambda$ in $\lie_m$ is equal to the number  of standard Young tableaux of shape $\lambda$ and  major index congruent to $i \bmod m$, where $i$ is any fixed positive integer relatively prime to $m$.  
  \end{theorem}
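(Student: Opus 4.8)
To prove the theorem, the plan is to combine two classical ingredients: Klyachko's description of the Lie module as an induced representation \cite{Kl}, and Stanley's $q$-analogue of the hook-length formula for the major-index generating function of standard Young tableaux. Fix an $m$-cycle $c\in\sg_m$, put $C_m=\langle c\rangle$, let $\omega$ be a primitive $m$-th root of unity, and let $\zeta$ be the faithful linear character of $C_m$ determined by $\zeta(c)=\omega$. Klyachko's theorem states that $\lie_m\cong\ind_{C_m}^{\sg_m}\zeta$, which on the level of Frobenius characteristics is the identity $\operatorname{ch}(\lie_m)=\frac{1}{m}\sum_{d\mid m}\mu(d)\,p_d^{\,m/d}$. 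First I would invoke Frobenius reciprocity to write, for any $i$ with $\gcd(i,m)=1$,
\[
 [\lie_m:S^\lambda]=\big\langle\operatorname{Res}_{C_m}\chi^\lambda,\ \zeta^{\,i}\big\rangle_{C_m}=\frac{1}{m}\sum_{l=0}^{m-1}\omega^{-il}\,\chi^\lambda(c^{\,l}).
\]
Because $\chi^\lambda$ is rational-valued, $\operatorname{Res}_{C_m}\chi^\lambda$ is a rational representation of $C_m$, so this multiplicity is independent of the choice of $i$ coprime to $m$; thus the left-hand side of the asserted equality is automatically $i$-independent, which accounts for half of the statement.

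On the combinatorial side I would apply the root-of-unity filter to the polynomial $f^\lambda(q):=\sum_{T\in\operatorname{SYT}(\lambda)}q^{\operatorname{maj}(T)}$, which yields
\[
 \#\{\,T\in\operatorname{SYT}(\lambda):\operatorname{maj}(T)\equiv i \pmod{m}\,\}=\frac{1}{m}\sum_{l=0}^{m-1}\omega^{-il}\,f^\lambda(\omega^{\,l}).
\]
Comparing the two displays, the theorem reduces to the single identity
\[
 f^\lambda(\omega^{\,l})=\chi^\lambda(c^{\,l})\qquad\text{for all }l=0,1,\dots,m-1;
\]
equivalently, the eigenvalues of the $m$-cycle $c$ acting on $S^\lambda$ are exactly $\{\,\omega^{\operatorname{maj}(T)}:T\in\operatorname{SYT}(\lambda)\,\}$, counted with multiplicity.

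To prove this key identity, fix $l$ and set $e=m/\gcd(l,m)$, so $\omega^{\,l}$ is a primitive $e$-th root of unity and $c^{\,l}$ has cycle type $(e^{m/e})$. For the left-hand side I would substitute $q=\omega^{\,l}$ into the $q$-hook-length formula
\[
 f^\lambda(q)=q^{\,b(\lambda)}\,\frac{[m]_q!}{\prod_{u\in\lambda}[h(u)]_q},
\]
where $b(\lambda)=\sum_{r\ge 1}(r-1)\lambda_r$ and $h(u)$ is the hook length of the cell $u$; numerator and denominator both vanish at $\omega^{\,l}$, so one cancels the factors $[se]_q$ in a limit, and the surviving value is a sign times a product of hook-length terms over the cells of the $e$-quotient $(\lambda^{(0)},\dots,\lambda^{(e-1)})$ of $\lambda$, being zero unless the $e$-core of $\lambda$ is empty. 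For the right-hand side I would iterate the Murnaghan--Nakayama rule $m/e$ times, removing border strips of size $e$: by Littlewood's theory of $e$-cores and $e$-quotients, $\chi^\lambda(e^{m/e})$ also vanishes unless the $e$-core of $\lambda$ is empty, and otherwise equals $\pm\binom{m/e}{|\lambda^{(0)}|,\dots,|\lambda^{(e-1)}|}\prod_{t}f^{\lambda^{(t)}}(1)$ --- the same signed product obtained on the left after recognizing the surviving product via the ordinary hook-length formula applied to each $\lambda^{(t)}$. Matching the two computations, signs included, establishes the identity, and then
\[
 [\lie_m:S^\lambda]=\frac{1}{m}\sum_{l=0}^{m-1}\omega^{-il}\chi^\lambda(c^{\,l})=\frac{1}{m}\sum_{l=0}^{m-1}\omega^{-il}f^\lambda(\omega^{\,l})=\#\{\,T\in\operatorname{SYT}(\lambda):\operatorname{maj}(T)\equiv i \pmod{m}\,\}.
\]

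I expect the key identity $f^\lambda(\omega^{\,l})=\chi^\lambda(c^{\,l})$ to be the main obstacle. Its two sides are produced by genuinely different mechanisms --- evaluation of a rational function of $q$ at a root of unity on one side, iterated border-strip removal on the other --- and reconciling them requires both a careful accounting of the $0/0$ cancellations in the $q$-hook-length formula and a matching of the resulting sign against the sign generated by the Murnaghan--Nakayama recursion; both of these are most cleanly organized through the combinatorics of $e$-cores and $e$-quotients. (A different route, the one taken by Kraskiewicz and Weyman in \cite{KW}, instead works with an explicit word basis of $\lie_m$ and tracks the action of a long cycle on it directly; the argument sketched above is the one I would pursue.)
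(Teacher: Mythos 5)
The paper does not prove this theorem; it is quoted from Kraskiewicz and Weyman \cite{KW} and used as a black box, so there is no ``paper's proof'' to compare against. Your proposal is a correct and, to experts, standard route that differs from Kraskiewicz--Weyman's original argument (which, as you note, manipulates an explicit word basis of $\lie_m$). Your reduction is clean: Klyachko's isomorphism $\lie_m\cong\ind_{C_m}^{\sg_m}\zeta$, Frobenius reciprocity, and the discrete Fourier/root-of-unity filter correctly reduce the theorem to the single character-value identity $f^\lambda(\omega^{\,l})=\chi^\lambda(c^{\,l})$, and the Galois argument that $\langle\operatorname{Res}_{C_m}\chi^\lambda,\zeta^i\rangle$ is constant over $i$ coprime to $m$ (because $\operatorname{Res}_{C_m}S^\lambda$ is realizable over $\Q$) is sound. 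I checked the key identity against small cases ($\lambda=(2,1),\,m=3$; $\lambda=(2,2),\,m=4$; $\lambda=(1^m)$) and it holds, including signs.

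The only place where you would have real work to do is exactly the step you flag: matching the surviving value of the $q$-hook-length quotient at a primitive $e$-th root of unity against the sign produced by iterated Murnaghan--Nakayama removal of $e$-ribbons, with both sides organized by the $e$-core and $e$-quotient of $\lambda$. This is a genuine computation, but it is a theorem in the literature rather than a gap: the evaluation of the major-index generating function at roots of unity is due in this form to D\'esarm\'enien and (in the language of characters of regular elements) to Springer, and the matching multinomial/sign formula for $\chi^\lambda$ on rectangular cycle types goes back to Littlewood and is treated in detail in James--Kerber. Two small points of hygiene if you wrote this out in full: (i) the claim ``numerator and denominator both vanish'' should be made precise --- the order of vanishing of $[m]_q!$ at a primitive $e$-th root of unity is $m/e$, and the denominator has order of vanishing equal to the number of hooks divisible by $e$, which equals $m/e$ exactly when the $e$-core is empty, and is strictly smaller otherwise, forcing $f^\lambda(\omega^l)=0$; (ii) once the identity is established for all $l$, you actually get the stronger conclusion that $\operatorname{Res}_{C_m}\chi^\lambda=\sum_{T}\zeta^{\operatorname{maj}(T)}$ as $C_m$-characters, so the multiplicity statement holds for every residue $i$, with the coprimality hypothesis only needed to invoke Klyachko on the Lie side.
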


Table~\ref{decomptable}  summarizes  the decompositions of $\rho_{n,k}$ into irreducibles that were previously established or will be established in this paper.
The partitions $\lambda$ in the table stand for Specht modules $S^\lambda$.  

\begin{table}[!t] \label{decomptable}

\begin{minipage}{\linewidth}

  \begin{center}\begin{tabular}{c|c|c|c|c}
  $n \backslash k$ & $1$  & $2$ &  $3$  & $4$ 
  \\[-2\medskipamount] & & & &  \\  \hline  & & &  \\[-2\medskipamount]      
$2$ & $1^2$ & $21$ & $31 \oplus 21^2$ & $41\oplus  32 \oplus   31^2 \oplus    2^21 \oplus 21^3$ \\   
$3$ & $ 1^3$   & $2^21$ & $3^21 \oplus 321^2$ & $4^21\oplus  432 \oplus   431^2 \oplus    42^21 \oplus 421^3   $ 
\\ & & & & $ \oplus  3^21^3 \oplus 32^3$ 
\\  
$4$ &  $1^4$  & $2^31$ & $3^31 \oplus 3^2 21^2$ & $4^31\oplus  4^232 \oplus   4^231^2 \oplus    4^2 2^21 \oplus 4^2  21^3   $ 
\\ & & & & $ \oplus  43^21^3 \oplus 432^3$ 
\\  \vdots & & & \\
$n$ & $1^n$ & $2^{n-1}1$ & $3^{n-1}1 \oplus 3^{n-2}21^2$ & $4^{n-1}1\oplus  4^{n-2}32 \oplus   4^{n-2}31^2 \oplus    4^{n-2} 2^21 \oplus 4^{n-2}  21^3   $ 
\\ & & & & $ \oplus  4^{n-3}3^21^3 \oplus 4^{n-3}32^3$ 

\end{tabular} \end{center}
\end{minipage}

\vspace{.1in}
\caption{Decomposition of $\rho_{n,k}$ for $k \le 4$.}
\label{knowtable}
\end{table}

In \cite{FHSW1}, the $k=2$ column of the table was established with the following result.
\begin{theorem}[\cite{FHSW1}]\label{k=2}  For all $n\ge 2$, the
 $\sg_{2n-1}$-module $\rho _{n,2}$ is isomorphic to the Specht module $S^{2^{n-1}1}$,
  whose dimension is the $n^{th}$ Catalan number $\frac{1}{n+1}{2n\choose n}$.  
\end{theorem}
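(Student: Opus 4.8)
The plan is to exhibit an explicit $\sg_{2n-1}$-equivariant isomorphism between $\rho_{n,2}$ and the Specht module $S^{2^{n-1}1}$ by first cutting the free LAnKe on $2n-1$ generators down to a spanning set of a manageable size, then matching that spanning set with the standard polytabloid basis of $S^{2^{n-1}1}$. Concretely, a generic element of the multilinear component with $k=2$ brackets has the form $[[x_{a_1},\dots,x_{a_n}],x_{b_1},\dots,x_{b_{n-1}}]$ where $\{a_1,\dots,a_n\}\sqcup\{b_1,\dots,b_{n-1}\}=[2n-1]$. First I would use the antisymmetry relation \eqref{antsym} in each of the two bracket slots to normalize each such element, up to sign, so that the "inner" entries $a_1<\cdots<a_n$ and the "outer" entries $b_1<\cdots<b_{n-1}$ are increasing; this already shows the module is spanned by $\binom{2n-1}{n}$ elements indexed by $n$-subsets of $[2n-1]$. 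Then I would invoke the generalized Jacobi identity \eqref{type1} to produce linear relations among these normalized generators: expanding $[[x_{a_1},\dots,x_{a_n}],x_{b_1},\dots,x_{b_{n-1}}]$ via \eqref{type1} expresses it in terms of generators whose inner block contains a fixed distinguished element (say the largest label $2n-1$, or equivalently some fixed element sits outside), so that a spanning set of size at most $\binom{2n-2}{n-1}=\dim S^{2^{n-1}1}$ remains.

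Next I would set up the comparison with $S^{2^{n-1}1}$. A column-standard tableau of shape $2^{n-1}1$ is determined by the content of its first column (an $n$-element subset of $[2n-1]$ containing, in standard form, the entry in the bottom box), with the second column receiving the complementary $n-1$ entries in increasing order; so Young's rule gives $\dim S^{2^{n-1}1}=\binom{2n-1}{n}-\binom{2n-1}{n+1}=$ the $n$th Catalan number, matching the dimension claim. I would define a map $\varphi$ sending the normalized generator with inner block $A=\{a_1<\cdots<a_n\}$ to the polytabloid $e_{T_A}$, where $T_A$ is the tableau with first column $A$ and second column $[2n-1]\setminus A$. The main work is to check that this $\varphi$ is well-defined and equivariant: well-definedness requires that every instance of the Jacobi relation \eqref{type1} is sent to a genuine relation among polytabloids in $S^{2^{n-1}1}$, and this is exactly the place where the Garnir relations of the Specht module must be matched against the $n$-term Jacobi expansions. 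Equivariance under $\sg_{2n-1}$ is then a matter of tracking signs coming from \eqref{antsym} against the sign-twisted action on polytabloids, which is routine once the combinatorial indexing is fixed.

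The main obstacle — and the heart of the argument — is the well-definedness of $\varphi$, i.e. showing that the Jacobi relations \eqref{type1} and the Garnir relations generate the \emph{same} subspace of the free vector space on $\binom{2n-1}{n}$ symbols. One direction (Jacobi relations lie in the span of Garnir relations) can be attacked by a direct, if slightly intricate, expansion: rewrite a single Jacobi relation, using antisymmetry to reorder blocks, as an alternating sum over the ways of swapping one inner entry with one outer entry, and recognize this as the Garnir relation for the two columns of $T_A$ with a one-box overlap. For the reverse inclusion it suffices, by a dimension count, to know that the Jacobi relations cut the spanning set down to dimension \emph{at most} the Catalan number; combined with a lower bound on $\dim\rho_{n,2}$ (e.g. from a concrete linearly independent family of bracketings, or from a known character/Frobenius-characteristic computation), this pins the dimension exactly and forces $\varphi$ to be an isomorphism. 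An alternative, cleaner route that avoids the explicit Garnir bookkeeping is to compute the Frobenius characteristic of $\rho_{n,2}$ directly: build a filtration or a short exact sequence of $\sg_{2n-1}$-modules from the "peel off the outer bracket" map, identify the relevant pieces as induced sign representations (reflecting antisymmetry in the two blocks), and apply the Pieri/Littlewood–Richardson rule to see that all terms cancel except $s_{2^{n-1}1}$. I expect the first (explicit-basis) approach to be the one that generalizes best to the later $k=3,4$ results, so I would carry that one out in full and relegate the character computation to a remark.
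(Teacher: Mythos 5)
The strategic outline—normalize via antisymmetry, map bracketed words to polytabloids of shape $2^{n-1}1$, match Garnir relations to Jacobi relations—is the right one and is the same family of argument the paper uses (in the proof of Theorem~\ref{noncombth}(2), which recovers this statement by observing there are no non-combs when $k=2$). However, your proposal has a computational error that undermines the dimension count you lean on, and it leaves the injectivity step essentially unproved.

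The error: you claim the spanning set cut down by Jacobi to subsets $A\ni 2n-1$ has size $\binom{2n-2}{n-1}=\dim S^{2^{n-1}1}$. That equality holds only for $n=2$. In general $\dim S^{2^{n-1}1}=C_n=\frac{1}{n+1}\binom{2n}{n}$, and $\binom{2n-2}{n-1}=nC_{n-1}>C_n$ for $n\ge 3$ (e.g.\ $n=3$: $6$ versus $5$). So the reduction you describe does not yet bring the spanning set down to Catalan size, and the upper bound $\dim\rho_{n,2}\le C_n$ is not established by it. Meanwhile, the ``lower bound on $\dim\rho_{n,2}$'' you say you would supply separately is actually automatic once $\varphi$ is well defined, since $\varphi$ surjects onto $S^{2^{n-1}1}$; the genuine missing piece is the upper bound (equivalently, injectivity of $\varphi$), and your proposal offers no mechanism for it once the arithmetic is corrected. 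Two standard ways to close this gap: (i) exploit the irreducibility of $S^{2^{n-1}1}$, as the paper does—once you have a well-defined nonzero $\sg_{2n-1}$-map in either direction between $\rho_{n,2}$ and the irreducible $S^{2^{n-1}1}$, its kernel is $0$ or everything, and ruling out the latter is a much softer task than a full dimension computation; or (ii) define maps in both directions on normalized generators, check that Garnir relations go to Jacobi relations and vice versa, and observe the two maps are mutually inverse on generators. For either route, note that the single-swap structure you want for matching Garnir relations comes from the alternative Jacobi identity \eqref{filipeq}, not directly from \eqref{type1}; passing from \eqref{type1} to \eqref{filipeq} uses more than antisymmetry (it is the equivalence proved in \cite{DI}), so that substitution should be made explicit rather than absorbed into ``reorder blocks.''
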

In this paper we establish the $k=3$ column of the table with the following result.
\begin{theorem} \label{k=3} For all $n\ge 2$, as
 $\sg_{3n-2}$-modules,   $$\rho _{n,3} \cong S^{3^{n-1}1} \oplus S^{3^{n-2}21^2}.$$
 Consequently, by the hook length formula, $\dim \rho _{n,3} = \displaystyle \frac{4}{ \prod_{i=1}^3 (n+ i)} {3n \choose n,n,n}$.
\end{theorem}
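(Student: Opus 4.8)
The plan is to proceed by induction on $n$, using $k=3$ throughout, with the base case $n=2$ supplied by Theorem \ref{liek}: the Kraskiewicz--Weyman formula gives $\lie_4 \cong S^{31} \oplus S^{21^2}$, which is exactly $S^{3^{n-1}1}\oplus S^{3^{n-2}21^2}$ when $n=2$. The heart of the argument will be a recursion relating $\rho_{n,3}$ to $\rho_{n-1,3}$. To set this up, I would first exploit the generalized Jacobi identity \eqref{type1} to obtain a spanning set for the multilinear component with a controlled shape: every $n$-bracketed permutation on $3n-2$ letters can be rewritten, modulo \eqref{antsym} and \eqref{type1}, as a sum of ``left-combed'' brackets $[[[x_{i_1},\dots],\dots],\dots]$ in which the innermost bracket contains a fixed distinguished letter (say the largest). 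This is the standard move that, in the $n=2$ case, reduces $\lie_m$ to a free module over $\sg_{m-1}$; here it should express $\rho_{n,3}$ in terms of data on $3n-3 = 3(n-1)$ letters. I expect the cleanest formulation to come via the tree Specht modules announced in the abstract: realize $\rho_{n,3}$ as a quotient (or subquotient) of an induced module $\operatorname{Ind}_{H}^{\sg_{3n-2}} (\text{something built from } \rho_{n-1,3} \text{ and sign characters})$, where $H$ is a Young subgroup reflecting the two-level bracket structure.

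The next step is to identify the induced/restricted module precisely. Writing $3n-2 = (3n-5) + 3$ or $= (n) + (n) + (n-2)$ according to the three inner brackets, I would compute $\operatorname{Res}$ and $\operatorname{Ind}$ using the Littlewood--Richardson rule and Pieri's rule applied to the inductive hypothesis $\rho_{n-1,3}\cong S^{3^{n-2}1}\oplus S^{3^{n-3}21^2}$ tensored with sign representations coming from the antisymmetry \eqref{antsym} within each bracket. The relations \eqref{type1} cut this induced module down; the key combinatorial claim will be that the space of relations is exactly large enough to kill every constituent except $S^{3^{n-1}1}$ and $S^{3^{n-2}21^2}$, each with multiplicity one. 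To pin down multiplicities I would use a dimension count: by the hook length formula $\dim S^{3^{n-1}1} + \dim S^{3^{n-2}21^2}$ should match an independently computed $\dim \rho_{n,3}$, and the stated closed form $\frac{4}{\prod_{i=1}^3(n+i)}\binom{3n}{n,n,n}$ can be verified to equal that sum by a direct hook-length computation. Pairing the dimension identity with an upper bound on each multiplicity (from the spanning set) and a lower bound (from explicit non-vanishing highest-weight vectors, or from restriction to $\sg_{3n-3}$ and the inductive hypothesis) forces equality.

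The main obstacle, I expect, is controlling the relation space coming from \eqref{type1}: unlike the $n=2$ Jacobi identity, the generalized identity has $n$ terms on the right-hand side, and showing that iterating it produces neither too few relations (which would leave extra irreducible constituents) nor dependencies among relations (which would make the quotient too small) requires a genuinely new combinatorial argument. This is presumably where the two flavors of tree Specht modules enter: one should give a basis for the relation space indexed by certain labelled trees, and a straightening/Garnir-type argument (in the spirit of the classical Specht module theory, hence the dedication) should show this basis is exact. A secondary, more technical point is bookkeeping the sign twists: each bracket contributes a sign character on its block, so the partitions appearing after applying Pieri/Littlewood--Richardson are conjugates of the naive ones, and one must track this carefully to land on $3^{n-1}1$ and $3^{n-2}21^2$ rather than their transposes. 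Once the $k=3$ relation space is understood, the multiplicity-one conclusion and the dimension formula follow formally.
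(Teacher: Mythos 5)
Your proposal has the right outer shape --- present $\rho_{n,3}$ as a quotient of a large induced module and cut it down by the relations coming from \eqref{type1}, then squeeze the answer between an upper bound (spanning set) and a lower bound --- but the central step, showing that the Jacobi relations kill exactly the right constituents, is asserted rather than carried out, and that step \emph{is} the theorem. The paper's argument for $k=3$ (Section~\ref{ncsec}) is not an induction on $n$ and does not use the tree Specht modules: those are built for the Stabilization Theorem in Sections~\ref{treequotsec}--\ref{trspecsec}, and the direct $k=3$ proof is given precisely to avoid them. Instead, the paper first splits off the comb part once and for all: Theorem~\ref{noncombth} gives $\rho_{n,k}\cong S^{k^{n-1}1}\oplus\tilde\rho_{n,k}$ where $\tilde\rho_{n,k}$ is spanned by non-combs, so only $\tilde\rho_{n,3}$ needs to be identified. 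Then the ambient non-comb module is computed as $\tilde V_{n,3}\cong \sgn_2[\sgn_n]\bullet\sgn_{n-2}$, the composition product is expanded via $\sgn_2[\sgn_n]\cong\bigoplus_{i\ \mathrm{odd}} S^{2^{n-i}1^{2i}}$, and the decisive tool is Lemma~\ref{indlem}: the relations are the image of a fixed operator $\varphi_d = dn_2 I - \sum(i,j)$ on $S^{\lambda_1}\bullet S^{\lambda_2}$, whose action on each irreducible is a scalar computed from content sums, with $\ker\varphi_d = S^{\lambda_1\conc\lambda_2}$ if and only if $\lambda_1$ has exactly $d$ columns and is compatible with $\lambda_2$. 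Applied summand by summand this yields $\tilde V_{n,3}/R\cong S^{3^{n-2}21^2}$ outright, with no Littlewood--Richardson or Pieri bookkeeping of the kind you anticipate.

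Two specific gaps in your plan. First, the dimension count is circular as described: you propose to match $\dim S^{3^{n-1}1}+\dim S^{3^{n-2}21^2}$ against an ``independently computed'' $\dim\rho_{n,3}$, but no independent formula for $\dim\rho_{n,3}$ exists prior to the decomposition; the closed form $\frac{4}{\prod_{i=1}^3(n+i)}\binom{3n}{n,n,n}$ is a \emph{corollary}, not an input. Second, your lower bound is underspecified: you need to rule out that the relations collapse one of the two summands to zero, and ``explicit non-vanishing highest-weight vectors'' is not easy to produce here. The paper handles this indirectly: $S^{k^{n-1}1}$ survives because $\psi$ in Theorem~\ref{noncombth}(2) is shown injective by a column-count argument using Pieri's rule, and $\tilde\rho_{n,3}\ne 0$ follows from Theorem~\ref{noncombth}(3), which pushes down to $n=2$ and invokes Kraskiewicz--Weyman. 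Without something playing the role of Lemma~\ref{indlem} to compute the quotient exactly, and without an argument for nonvanishing, your proposal does not close.
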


We also establish the decomposition for $\rho_{n,4}$ given in the table.
 One can see from the table that for $k\in \{1,2,3,4\}$ and $n\ge k$,    increasing $n$ by 1 adds a row of length $k$ to the top of each Young diagram indexing the Specht module in the decomposition of $\rho_{n, k}$.
 More precisely, for $k \in \{1,2,3,4\}$,  we have 
\begin{equation}\label{introbetaeq} \rho_{n,k} \cong \beta_{n,k} \iff n \ge k,\end{equation} 
where $\beta_{n,k}$ is the $\sg_{k(n-1)+1}$-module whose decomposition into irreducibles is obtained by adding a row of length $k$ to the top of each Young diagram in the decomposition of $\rho_{n-1,k}$.  Here we set $\beta_{1,k} = \rho_{1,k} = S^1$.

The question of whether  (\ref{introbetaeq}) is true in general was raised in \cite{FHSW1}.  In this paper we obtain an affirmative answer to one direction with our main result, which is given in the following theorem,  and we present a conjecture which  refines the other direction.  Our main result indicates  that $\rho_{n,k}$ stabilizes in a certain sense when $n$ exceeds $k$ or is equal to $k$.

\begin{theorem}[Stabilization Theorem] \label{introstabth} Let $n \ge k \ge 1$.  Then   as $\sg_{k(n-1)+1}$-modules, 
\begin{equation}\label{introstabeq} \rho_{n,k} \cong \beta_{n,k}.\end{equation} \end{theorem}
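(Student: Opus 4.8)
The plan is to prove the Stabilization Theorem by setting up a map that realizes $\beta_{n,k}$ inside $\rho_{n,k}$ and then comparing dimensions (or characters). The key structural observation motivating the definition of $\beta_{n,k}$ is that adding a row of length $k$ to the top of every diagram in the decomposition of $\rho_{n-1,k}$ is exactly the operation of inducing $\rho_{n-1,k}$ (as an $\sg_{k(n-2)+1}$-module) tensored with the trivial representation of $\sg_k$ up to $\sg_{k(n-1)+1}$, and then picking out a single well-defined constituent — or, better, it is the image of a Pieri-type/branching operator. Concretely, I would first establish the combinatorial identity that $\beta_{n,k}$, so defined, is the $\sg_{k(n-1)+1}$-submodule of $\ind_{\sg_k \times \sg_{k(n-2)+1}}^{\sg_{k(n-1)+1}} (S^k \boxtimes \rho_{n-1,k})$ obtained by intersecting the kernels of all the "column-symmetrizer" raising operators; equivalently, $\beta_{n,k}$ is characterized by its Frobenius characteristic $s_k^{\perp}$-being-adjoint to $\rho_{n-1,k}$, i.e. $\mathrm{ch}\,\beta_{n,k} = s_k \cdot \mathrm{ch}\,\rho_{n-1,k}$ minus the lower-order terms coming from non-row shapes — I would pin down the exact statement using the tree Specht modules introduced later in the paper, which are presumably built precisely to carry this induction.

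Second, I would construct an explicit $\sg_{k(n-1)+1}$-equivariant map $\Phi_{n,k}\colon \beta_{n,k} \to \rho_{n,k}$. The natural candidate: given a generator of the multilinear component of the free Filippov $(n-1)$-algebra on $k(n-2)+1$ letters (a bracketed permutation with $k$ brackets, each of arity $n-1$), produce an element of $\rho_{n,k}$ by "inserting" the $k$ new letters $y_1,\dots,y_k$ — one into each of the $k$ brackets, raising its arity from $n-1$ to $n$ — and antisymmetrizing over the placement of the $y_i$'s. One must check that this descends to a well-defined linear map, i.e. that it respects the defining relations of $\rho_{n-1,k}$ (antisymmetry and the generalized Jacobi identity \eqref{type1}); the Jacobi check is where the hypothesis $n \ge k$ should enter, since with $k$ brackets and arity gap $1$ there is "enough room" for the inserted letters not to collide, making the $(n-1)$-ary Jacobi relations lift to consequences of the $n$-ary ones. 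I would then argue $\Phi_{n,k}$ is injective, either by exhibiting a left inverse (a "deletion" map that removes one chosen letter from each bracket) or by a leading-term argument with respect to a suitable monomial order on bracketed permutations.

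Third, having an injection $\beta_{n,k} \hookrightarrow \rho_{n,k}$, I would close the gap by showing $\dim \rho_{n,k} \le \dim \beta_{n,k}$ when $n \ge k$. This is the direction that uses the bracketing relations to bound the multilinear component from above, and I expect to do it by induction on $k$: use the generalized Jacobi identity \eqref{type1} to rewrite any bracketed permutation with an iterated (non-"comb") bracketing structure in terms of ones where, say, the first bracket is a "leaf" bracket, reducing the count to a span indexed by data that the inductive hypothesis (applied to $\rho_{n-1,k}$ and to $\rho_{n, k-1}$, both already stabilized since $n-1 \ge k-1$ and $n \ge k-1$) controls. The combinatorics here should match the known base cases $k \le 4$ from Table~\ref{knowtable} and the closed form in Theorem~\ref{k=3}.

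The main obstacle I anticipate is the upper bound / spanning step: controlling the multilinear component of the free Filippov $n$-algebra is genuinely hard because the generalized Jacobi identity \eqref{type1} is an $n$-term relation with overlapping supports, and naive rewriting does not obviously terminate or produce a basis of the predicted size. I expect the tree Specht modules to be the crucial device — they should provide a model in which the analogue of a "standard tableau / Lyndon basis" exists, so that one can both write down an explicit spanning set of size $\dim\beta_{n,k}$ and verify linear independence by pairing against the image of $\Phi_{n,k}$. Making the insertion map $\Phi_{n,k}$ well-defined (the Jacobi compatibility) is a secondary but still delicate point, and it is precisely there that the inequality $n \ge k$ is not merely convenient but necessary, consistent with the conjectured failure of \eqref{introbetaeq} when $n < k$.
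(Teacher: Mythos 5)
Your high-level architecture (realize $\beta_{n,k}$ inside $\rho_{n,k}$, then bound $\rho_{n,k}$ from above, with tree Specht modules doing the heavy lifting in the hard direction) is in the right spirit, but several of the concrete steps you propose don't match what is actually needed and, as stated, wouldn't go through.

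First, the realization of $\beta_{n,k}$. Your proposed ``insertion'' map $\Phi_{n,k}\colon \beta_{n,k}\to\rho_{n,k}$ isn't well-posed: $\beta_{n,k}$ is defined only abstractly by prescribing its decomposition into irreducibles, so there is nothing to apply an insertion to. What you actually have in hand is $\rho_{n-1,k}$, an $\sg_{k(n-2)+1}$-module, and the operation ``add a row of length $k$ to the top of every constituent'' is \emph{not} multiplication of Frobenius characteristics by $s_k$ (that would produce many extra constituents via Pieri). The paper goes the other direction: it defines $\hat\rho_{n,k}$ as the span of bracketed words of type $1^{k(n-2)+1}k$ (one letter repeated $k$ times), shows via a $b$-\emph{removing} map that $\hat\rho_{n,k}\cong\rho_{n-1,k}$ as $\sg_{k(n-2)+1}$-modules (Lemma~\ref{hatlem2}), and separately shows via restriction to $\sg_{k(n-2)+1}\times\sg_B$ followed by trivial-symmetrization over $\sg_B$ that $\hat\rho_{n,k}\cong\bigoplus c_\lambda \hat S^\lambda$ with the sum over $\lambda$ with exactly $k$ columns (Lemma~\ref{hatlem1}). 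Combining these gives Theorem~\ref{decomposition_thm}: $\rho_{n,k}\cong\beta_{n,k}\oplus\gamma_{n,k}$ with $\gamma_{n,k}$ having $\le k-1$ columns, \emph{for all} $n,k$ with no constraint. So the ``injectivity of $\Phi_{n,k}$'' you worry about is handled, but by a quite different and cleaner mechanism.

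Second, and more consequentially, you locate the role of $n\ge k$ in a Jacobi-compatibility check for the insertion map, and this is wrong. Since the decomposition $\rho_{n,k}\cong\beta_{n,k}\oplus\gamma_{n,k}$ holds unconditionally, the only thing $n\ge k$ must buy is $\gamma_{n,k}=0$, i.e.\ that every irreducible in $\rho_{n,k}$ has exactly $k$ columns (Theorem~\ref{rhocolth}). The hypothesis enters at a single combinatorial point (Lemma~\ref{mindeglem}): if $n\ge k$ then every internal node of every $n$-ary tree shape $T\in\mathbb T_{n,k}$ has a leaf child, which is exactly what is needed to turn $\mu_T$ into a $\hat T$-partition and hence to compare $\rho_T/(\rho_T\cap\rho_{D(T)})$ with a tree Specht module $S^{\hat T,\mu_T}$. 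Your induction scheme for the upper bound (double induction on $k$ via $\rho_{n,k-1}$ and $\rho_{n-1,k}$) also doesn't match the actual engine: the paper filters $\rho_{n,k}$ by the submodules $\rho_T$ indexed by tree shapes, inducts on the lexicographic order of depth vectors $\delta(T)$, uses the generalized Jacobi identity to push non-increasing shapes to smaller depth vector (Lemma~\ref{depveclem}), and handles the increasing case by the Quotient Lemma, which is where the two kinds of tree Specht modules and Lemma~\ref{indlem} enter. That is a genuinely different inductive structure, and without it the ``naive rewriting does not obviously terminate'' concern you raise is not resolved by your sketch.
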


We also prove the following theorem, which is used in the proof of the Stabilization Theorem and also gives some information on what happens above the $n=k$ diagonal.
\begin{theorem}  \label{decomposition_thm} Let $n,k \ge 1$.  Then as $\sg_{k(n-1)+1}$-modules,
\begin{equation} \label{gambet} \rho_{n,k} \cong   \beta_{n,k} \oplus \gamma_{n,k}, \end{equation}
for some  $\sg_{k(n-1)+1}$-module $\gamma_{n,k}$ all of whose irreducibles  have Young diagrams with at most $k-1$ columns.  
\end{theorem}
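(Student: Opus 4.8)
The plan is to prove the decomposition $\rho_{n,k} \cong \beta_{n,k} \oplus \gamma_{n,k}$ by exhibiting, inside the multilinear component of the free Filippov $n$-algebra on $k(n-1)+1$ generators, an explicit $\sg_{k(n-1)+1}$-submodule isomorphic to $\beta_{n,k}$, together with a complementary submodule whose irreducible constituents are forced to have narrow Young diagrams. The starting point is the recursive description of $\beta_{n,k}$: its decomposition is obtained from that of $\rho_{n-1,k}$ by prepending a row of length $k$ to every Young diagram, which by the branching rule / Pieri-type considerations means $\beta_{n,k}$ is (a submodule of) an induction product of $\rho_{n-1,k}$ with the trivial module on $k$ new letters, suitably corrected. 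Concretely I would try to realize $\beta_{n,k}$ as the image of a natural map that takes a bracketed expression in the free Filippov $(n-1)$-algebra and inserts one fixed ``extra'' generator into each of the $k$ brackets, turning $(n-1)$-ary brackets into $n$-ary ones. The key algebraic input is that the generalized Jacobi identity \eqref{type1} for the $n$-ary bracket restricts, on expressions of this special form, to the generalized Jacobi identity for the $(n-1)$-ary bracket, so such a map is well defined on the free Filippov algebra and is $\sg$-equivariant for the action permuting the remaining $k(n-2)+1$ letters; inducing up to $\sg_{k(n-1)+1}$ then produces a copy of $\beta_{n,k}$.

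The next step is to show that this copy of $\beta_{n,k}$ is a direct summand and to identify the complement $\gamma_{n,k}$. Because we work in characteristic $0$, every submodule is a summand, so the content is entirely in controlling the constituents of the complement. Here I would argue that any irreducible $S^\lambda$ appearing in $\rho_{n,k}$ but not accounted for by $\beta_{n,k}$ must have $\lambda$ with at most $k-1$ columns, i.e. $\lambda_1 \le k-1$. The mechanism: $\rho_{n,k}$ is a quotient of the full space of $n$-bracketed permutations modulo antisymmetry and the generalized Jacobi relations, and the antisymmetry relation \eqref{antsym} alone already forces any $\lambda$ with $\lambda_1 \geq $ (number of brackets containing a ``repeated-column'' pattern)$+1$ to be killed unless it is supported on the $\beta_{n,k}$ part; one tracks columns because each $n$-ary bracket, being totally antisymmetric in its $n$ arguments, contributes a column of length $n$, and a tableau with a long first row would require two entries from the same bracket to lie in the same row, contradicting antisymmetry — except precisely when those entries come from the ``inserted generator'' positions that build the length-$k$ row of $\beta_{n,k}$. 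Making this column bound precise is where I would invoke the standard correspondence between the symmetric group action on bracketed expressions and tableau combinatorics, likely via the tree-Specht-module technology the introduction promises.

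The main obstacle, I expect, is proving that the submodule we construct is \emph{exactly} $\beta_{n,k}$ rather than merely containing it or being contained in it — equivalently, pinning down the multiplicities on the nose. Showing the insertion map is injective (so no collapse occurs and we genuinely get $\beta_{n,k}$ and not a proper quotient) requires checking that the only relations among the inserted expressions are the images of relations in $\rho_{n-1,k}$, and this is a delicate compatibility claim between the two generalized Jacobi identities; a dimension count alone will not suffice because we do not yet know $\dim \rho_{n,k}$ in general. I would handle this by constructing an explicit splitting or a retraction (for instance, a ``deletion'' operation that removes the marked generator and detects whether an element lies in the image), and by verifying on a spanning set of bracketed permutations that deletion followed by insertion is the identity on the image. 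Once injectivity and the column bound on the complement are both in hand, the decomposition \eqref{gambet} follows immediately from complete reducibility, with $\gamma_{n,k}$ defined as the orthogonal (or any) complement of the constructed copy of $\beta_{n,k}$ and its constituents constrained by the column argument above.
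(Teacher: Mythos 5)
There is a genuine gap, and it sits at the pivotal step of your outline. You propose to insert a single fixed extra generator $b$ into each of the $k$ brackets of an element of $\rho_{n-1,k}$, obtaining an $n$-bracketed expression in which $b$ appears $k$ times, and then to ``induce up to $\sg_{k(n-1)+1}$'' to obtain $\beta_{n,k}$. The first part of this is sound and is in fact the paper's Lemma~\ref{hatlem2}: the space $\hat\rho_{n,k}$ of such $b$-bracketings is isomorphic as an $\sg_{k(n-2)+1}$-module to $\rho_{n-1,k}$, because \eqref{type1} restricts to the $(n-1)$-ary Jacobi identity on $b$-bracketings. But the induction step fails. If $\rho_{n-1,k}\cong\bigoplus c_{\lambda^-}S^{\lambda^-}$, then inducing $S^{\lambda^-}\otimes 1_{\sg_k}$ from $\sg_{k(n-2)+1}\times\sg_k$ to $\sg_{k(n-1)+1}$ yields, by Pieri's rule, the sum of all $S^\lambda$ with $\lambda/\lambda^-$ a horizontal strip of size $k$ -- not just the single $\lambda$ obtained by adding a row of length $k$ on top. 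So ``inducing up'' produces a module strictly larger than $\beta_{n,k}$, and there is no natural sub- or quotient module that picks out $\beta_{n,k}$ without further work. The retraction you sketch in the last paragraph cannot fix this, because the insertion map does not land in the multilinear component $\rho_{n,k}$ in the first place (its image has $b$ repeated $k$ times), so there is no submodule of $\rho_{n,k}$ for it to be a retraction onto.

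Your backup argument -- that antisymmetry alone bounds the columns of the complement by $k-1$ -- is also not sufficient. Antisymmetry (via Pieri's rule applied to $V_{n,k}\cong\sgn_n\bullet\sgn_{n-1}^{\bullet(k-1)}$, as in Proposition~\ref{lekprop}) gives the bound of at most $k$ columns on \emph{all} of $\rho_{n,k}$; it does not distinguish the $\beta_{n,k}$ part from a putative complement, and it certainly does not pin down the multiplicities of the width-$k$ irreducibles. The paper's actual proof goes in the opposite direction: it \emph{restricts} rather than \emph{induces}. One restricts $\rho_{n,k}$ to $\sg_{k(n-2)+1}\times\sg_B$ with $|B|=k$ and applies the symmetrizer $\alpha=\sum_{\sigma\in\sg_B}\sigma$. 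On the Filippov-algebra side this yields $\hat\rho_{n,k}\cong\rho_{n-1,k}$ (the specialization/retraction you had in mind), and on the Specht-module side it yields $\hat S^\lambda$, which is $\cong S^{\lambda^-}$ when $\lambda$ has exactly $k$ columns, vanishes when $\lambda$ has fewer, and does not occur when $\lambda$ has more (by Proposition~\ref{lekprop}). Matching the two decompositions of $\hat\rho_{n,k}\cong\rho_{n-1,k}$ gives, on the nose, that the multiplicity of each $k$-column $S^\lambda$ in $\rho_{n,k}$ equals its multiplicity in $\beta_{n,k}$, and the theorem follows by complete reducibility. You had the right $b$-insertion picture and the right observation about the Jacobi identity, but the mechanism needed is a weight-space (restriction plus symmetrizer) argument, not induction; without that switch the multiplicity identification cannot go through.
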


The following conjecture refines Conjecture~4.2 of  \cite{FHSW1}, which addresses the other direction of (\ref{introbetaeq}). 
\begin{conjecture} \label{lowconj} If $n \le j < k$ then there is an irreducible in $\gamma_{n,k}$ that has exactly $j$ columns.  \end{conjecture}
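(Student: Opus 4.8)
The goal is to prove Conjecture~\ref{lowconj}, which asserts that for $n \le j < k$ the module $\gamma_{n,k}$ from Theorem~\ref{decomposition_thm} contains an irreducible whose Young diagram has exactly $j$ columns.

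\medskip

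\noindent\textbf{Proof proposal.}
The plan is to isolate, inside $\rho_{n,k}$, an explicit summand that is forced to live in $\gamma_{n,k}$ (because its irreducibles have at most $k-1$ columns) and whose diagram widths we can control.  I would proceed as follows.

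First, I would set up the relevant branching/generating machinery.  By Theorem~\ref{decomposition_thm} we have $\rho_{n,k} \cong \beta_{n,k} \oplus \gamma_{n,k}$, where every irreducible constituent of $\beta_{n,k}$ has a Young diagram whose top row has length exactly $k$ (hence at least $k$ columns), while every irreducible of $\gamma_{n,k}$ has fewer than $k$ columns.  Thus for any partition $\lambda \vdash k(n-1)+1$ with strictly fewer than $k$ columns, the multiplicity of $S^\lambda$ in $\rho_{n,k}$ equals its multiplicity in $\gamma_{n,k}$.  So it suffices to exhibit, for each $j$ with $n \le j < k$, a partition $\lambda$ with exactly $j$ columns that appears with positive multiplicity in $\rho_{n,k}$.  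The natural candidates are the ``staircase-type'' partitions suggested by the $n=2$ pattern in Table~\ref{knowtable}: for $n=2$ one expects hook-like or near-hook shapes, and for general $n$ the analogous shapes obtained by stacking $n-1$ copies of a row on top — i.e.\ shapes of the form $(\text{something})^{n-1}$ with a short overhang, chosen so that the number of columns is exactly $j$.

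Second, I would compute the multiplicity of these candidate $\lambda$ in $\rho_{n,k}$ using whatever decomposition tools are developed in the body of the paper for the $k=3,4$ cases and for the Stabilization Theorem — presumably an inductive decomposition of $\rho_{n,k}$ via restriction to $\sg_{k(n-1)}$ or via the tree Specht modules mentioned in the abstract, together with the $n=2$ base case given by Kraskiewicz--Weyman (Theorem~\ref{liek}).  Concretely, for $n \le j < k$ I would try to reduce the multiplicity computation to the classical Lie case $\lie_{k+1}$ (the $n=2$ row): the condition $n \le j$ should be exactly what is needed to ``thread'' $n-1$ rows through while still leaving a Lie-type computation on $k+1$ letters with a shape of $j$ columns, and Theorem~\ref{liek} then guarantees a standard Young tableau of the right shape and major index, giving positive multiplicity.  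The arithmetic condition ``major index $\equiv i \bmod m$ with $\gcd(i,m)=1$'' should be verifiable for the chosen shapes by exhibiting one explicit standard filling.

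Third, I would check that the produced $\lambda$ does in fact have exactly $j$ columns (not fewer, not more) and that $j<k$ guarantees it cannot be absorbed into $\beta_{n,k}$, so it genuinely contributes to $\gamma_{n,k}$; this is where the hypothesis $j<k$ is used.  The hypothesis $j \ge n$ is used to guarantee that the shape with $j$ columns and the required number of boxes $k(n-1)+1$ actually exists with $\le n-1$ rows beyond the first, equivalently that the ``stacking'' is feasible.

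\medskip

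\noindent\textbf{Main obstacle.}
The hard part will be the explicit multiplicity computation: unlike the Stabilization Theorem, which only needs to know that the ``wide'' part of $\rho_{n,k}$ is exactly $\beta_{n,k}$, here we need a positive lower bound on a specific narrow constituent, and the inductive decomposition in Theorem~\ref{decomposition_thm} only controls $\gamma_{n,k}$ qualitatively (``at most $k-1$ columns''), not its precise content.  I expect one must either (i) track a particular constituent through the induction on $n$, showing it survives at each step because the branching rule sends a $j$-column shape on $k(n-2)+1$ letters to a $j$-column shape on $k(n-1)+1$ letters, bottoming out at the Kraskiewicz--Weyman computation for $\lie_{k+1}$; or (ii) realize the candidate constituent via an explicit element of the multilinear component of the free Filippov $n$-algebra (a specific bracketing) that is annihilated by the relevant symmetrizers for all larger-column shapes. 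Option (i) seems more promising given the paper's inductive framework, and the key technical lemma to establish is that the branching map $\rho_{n,k} \downarrow$ respects the column count of the narrow part, so that a $j$-column irreducible present at level $n-1$ forces one at level $n$.
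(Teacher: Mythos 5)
The statement you are addressing is Conjecture~\ref{lowconj}, which the paper does \emph{not} prove. It is presented as open; in Section~7 the authors remark only that the equivalent conjecture holds for $n=2$ (via Theorem~\ref{liek}, left as an exercise) and for $1 \le k \le 4$ (by inspecting Table~\ref{knowtable}). Your reduction is correct and in line with the paper's own remark: by Theorem~\ref{decomposition_thm}, the multiplicity of any $j$-column irreducible with $j<k$ in $\rho_{n,k}$ equals its multiplicity in $\gamma_{n,k}$, so it suffices to exhibit a $j$-column constituent of $\rho_{n,k}$; and bottoming out at Kraskiewicz--Weyman for the $n=2$ base is the natural move. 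But what you have written is a plan, not a proof. You yourself flag the multiplicity computation as ``the hard part'' and do not carry it out, and no explicit shapes, fillings, or induction steps are verified. There is thus a genuine gap; the conjecture remains unestablished.

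There is also a concrete reason to doubt the inductive route you favor in your option~(i). The ``key technical lemma'' you propose --- that restriction from level $n$ to level $n-1$ respects the column count of the narrow part, so a $j$-column constituent at level $n-1$ forces one at level $n$ --- is not supplied by the paper's machinery, and cannot be extracted from it. The branching used in Lemmas~\ref{hatlem2} and~\ref{hatlem1} (the ingredients of Theorem~\ref{decomposition_thm}) applies the symmetrizer $\alpha=\sum_{\sigma\in\sg_B}\sigma$ to the restriction of $\rho_{n,k}$, and the proof of Lemma~\ref{hatlem1} explicitly shows that $\hat S^{\lambda}=0$ whenever $\lambda$ has fewer than $k$ columns. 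In other words, this branching is blind to $\gamma_{n,k}$: it kills precisely the constituents you need to track, and the resulting identity $\rho_{n-1,k}\cong\bigoplus c_\lambda S^{\lambda^-}$ ranges only over $k$-column $\lambda$. So the inductive relation between $\rho_{n,k}$ and $\rho_{n-1,k}$ in the paper says nothing about the narrow part, and your proposed key lemma would require an entirely different mechanism --- for instance your option~(ii), an explicit Filippov bracketing generating a cyclic submodule with a controlled $j$-column constituent, or a different symmetrizer that does not annihilate the narrow constituents. Until such a mechanism is produced and verified, the argument does not close, and the statement stays a conjecture.
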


Note that the decompositions for $\rho_{n,2}$ and $\rho_{n,3}$, given in Theorems~\ref{k=2} and~\ref{k=3}, respectively,  are immediate consequences of the Stabilization Theorem.    The decomposition for $\rho_{n,4}$ given in Table~\ref{knowtable} also follows immediately from the Stabilization Theorem once the decomposition for the  entries $\rho_{2,4}$ and  $\rho_{3,4}$ are determined.  The former can be computed by applying Theorem~\ref{liek} and  the latter is computed in this paper by using Theorem~\ref{decomposition_thm}, the decompositions for $\rho_{2,4}$ and $\rho_{3,3}$, and a computer calculation for $\dim \rho_{3,4}$.

As a tool in proving  the Stabilization Theorem, we  initiate a study of two types of generalizations of the notion of Specht module in which Stanley's  $P$-partitions \cite[Sec. 3.15.1]{St1}, where $P$ is a tree,  assumes the role of ordinary partitions in the definition of Specht module. These tree Specht modules are the same as ordinary Specht modules  when $P$ is a path.

\begin{remark}  Theorems~\ref{k=3} and~\ref{decomposition_thm} were announced in our extended abstract  \cite{FHSWfpsac}  and in our  paper \cite{FHSW1}, with a reference to our current paper (which was in preparation at the time of publication of \cite{FHSW1}). Proofs of these theorems were posted in the first version of this paper  (arXiv:2402.19174v1).

The decomposition for $\rho_{n,4}$  given in Table~1 was proposed in   \cite[Proposition 4.7]{FHSW1} and the proof for $n=3$  was added  in the second version of this paper (arXiv:2402.19174v2) posted on March 19, 2024. 
The proof for general $n$ was added as a consequence of the $n=3$ case and Theorem~\ref{introstabth} in the third version (arXiv:2402.19174v3) posted on October 13, 2024.

Speculation on Theorem~\ref{introstabth} (and Theorem~\ref{rhocolth})  was raised in  \cite[Section 4]{FHSW1} and a proof  was announced  in a talk by one of the authors at the  June 2024 conference
``The Many Combinatorial
Legacies of Richard P. Stanley";  the slides are available at

 {\small https://www.math.harvard.edu/event/math-conference-honoring-richard-p-stanley/}.

\noindent Proofs of these theorems,  as well as the material on tree Specht modules, were added in the third version (arXiv:2402.19174v3) posted on  October 13, 2024.

Motivated by  \cite{FHSW1} and   an earlier version of  the current paper, Maliakas and Stergiopoulou \cite{MS} have recently posted  preprints (arXiv:2401.09405, arXiv:2410.06979) presenting  proofs of Theorems~\ref{k=3} and~\ref{rhocolth}, which appear to be very different from our approach.
\end{remark}

This paper is organized as follows. In Section~\ref{specsec}, after recalling a presentation for Specht modules appearing in \cite{Fu}, we obtain a result on the kernel of a certain linear operator on the induction product of Specht modules. In addition to this result playing an important role in subsequent sections,   a new presentation for Specht modules is obtained as a byproduct.     In Section~\ref{generalsec}, we prove Theorem~\ref{decomposition_thm} and we obtain  preliminary results on  the submodules of $\rho_{n,k}$ spanned by ``combs''  and  ``non-combs".
These results  are used in a proof of Theorem~\ref{k=3} (that doesn't rely on the Stabilization Theorem), which is given in Section~\ref{ncsec}. The proof of the  Stablilization Theorem is divided into two parts with Part 1 in Section~\ref{treequotsec} and Part 2 in Section~\ref{trspecsec}.  Our main tools in proving the Stabilization Theorem are the tree Specht modules of the first kind   introduced in Section~\ref{treequotsec} and the tree  Specht modules of the second kind  introduced in Section~\ref{trspecsec}.  In Section~\ref{fursec}, we establish the decomposition of $\rho_{n,4}$ given in Table~\ref{knowtable} and we discuss Conjecture~\ref{lowconj}.

\section{A linear operator on the induction product of Specht modules} \label{specsec} In this section, we obtain a result on a linear operator  that will play an important role in subsequent sections.  A more immediate application of this result  is a presentation for Specht modules that we have not seen in the literature before except for the special case of partitions with two columns \cite{BF}, where different techniques were used.

For ease of notation, throughout this paper, we will use the notation $A \bullet B$, for $A$ an $\sg_n$-module and $B$ an $\sg_m$-module, to denote the {\em induction product}
$$A \bullet B = (A \otimes B) \uparrow_{\sg_n \times \sg_m}^{\sg_{n+m}} ,$$ 
which is an associative operation that is distributive over direct sum.

\subsection{Preliminaries on Specht modules} \label{prelimsubsec} The Specht modules $S^\lambda$, where $\lambda$ is a partition of $n$, give a complete set of  irreducible 
representations of the symmetric group $\sg_n$ over a field  of characteristic $0$, say $\mathbb C $.  They can be constructed as  
subspaces of the regular representation $\mathbb C \sg_n$  or  as presentations given in terms of generators and relations, 
known as Garnir relations.   We will need the latter type of construction in this paper.

Let $\lambda= (\lambda_1 \geq\dots \geq \lambda_l)$ be a partition of $n$.  A {\em Young tableau} of shape $\lambda$ is a filling of the Young diagram associated with $\lambda$ with distinct entries from the set $[n]:=\{1,2,\dots,n\}$.  Let $\mathcal T_\lambda$ be the set of  Young tableaux of shape
$\lambda$.

To construct the Specht module as a presentation, one can use column tabloids and Garnir relations. Let $M^\lambda$ be the vector space (over $\C$) generated by  
$\mathcal T_\lambda$ subject only to column relations, which are of
the form $t+s$, where $s\in \mathcal T_\lambda$ is obtained from $t\in \mathcal T_\lambda$ by switching two
entries in the same column.  
Given $t \in \mathcal T_\lambda$, let $\bar t$ denote the coset of $t$
in $M^\lambda$. These cosets, which are called {\it column tabloids}, generate $M^{\lambda}$.
A Young tableau is {\em column strict} if the entries of each of its columns increase from top to bottom.  Clearly,
$\{\bar t  :  t \mbox{ is a column strict Young tableau of shape } \lambda\}$ is a basis for $M^\lambda$.

We will use a presentation of $S^\lambda$ discussed in Fulton \cite[page 102 (after Ex.~15)]{Fu}. Suppose $\lambda$ has $m$ columns, labeled from left to right. The generators are the column tabloids $\bar t$, where $t \in \mathcal T_\lambda$.
The Garnir relations are of the form\footnote{In our previous papers \cite{FHSWfpsac, FHSW1, FHW}, we  denote $g^t_{c}$ by  $g^t_{c,1}$.}
\begin{equation} \label{gareq} g^t_{c}:=
\bar t-\sum_s \bar s,\end{equation} where  $c \in \mathcal [m-1]$, $t \in  \mathcal T_\lambda$, and the sum is over all $s\in \mathcal
T_\lambda$ obtained from $t\in \mathcal T_\lambda$ by exchanging any
entry of  column $c$  with the top entry of the next
column. 

\vspace{.1in}\noindent {\bf Example:} For
$$t= \ytableausetup
{mathmode, boxsize=1em}
\begin{ytableau}
\scriptstyle 1 & \scriptstyle 5 & \scriptstyle 7 \\
 \scriptstyle 2 &  \scriptstyle 6  \\
 \scriptstyle 3 \\
\scriptstyle 4
\end{ytableau}\,\,,$$
we have
$$ g^t_{1} =
\ytableausetup
{mathmode, boxsize=1em}
\overline{\begin{ytableau}
\scriptstyle 1 & \scriptstyle 5 & \scriptstyle 7 \\
 \scriptstyle 2 &  \scriptstyle 6  \\
 \scriptstyle 3 \\
\scriptstyle 4
\end{ytableau}}
\,\,- \,\,
\ytableausetup
{mathmode, boxsize=1em}
\overline{\begin{ytableau}
\scriptstyle 5 & \scriptstyle 1  & \scriptstyle 7 \\
 \scriptstyle 2 &  \scriptstyle 6  \\
 \scriptstyle 3 \\
\scriptstyle 4
\end{ytableau}}
\,\,- \,\,
\ytableausetup
{mathmode, boxsize=1em}
\overline{\begin{ytableau}
\scriptstyle 1 & \scriptstyle 2  & \scriptstyle 7\\
 \scriptstyle 5 &  \scriptstyle 6  \\
 \scriptstyle 3 \\
\scriptstyle 4
\end{ytableau}}
\,\,- \,\,
\ytableausetup
{mathmode, boxsize=1em}
\overline{\begin{ytableau}
\scriptstyle 1 & \scriptstyle 3 & \scriptstyle 7 \\
 \scriptstyle 2 &  \scriptstyle 6  \\
 \scriptstyle 5 \\
\scriptstyle 4
\end{ytableau}}
\,\,- \,\,
\ytableausetup
{mathmode, boxsize=1em}
\overline{\begin{ytableau}
\scriptstyle 1 & \scriptstyle 4 & \scriptstyle 7 \\
 \scriptstyle 2 &  \scriptstyle 6  \\
 \scriptstyle 3 \\
\scriptstyle 5
\end{ytableau}} \,\,.
$$

Permutations  $\sigma$ in $\sg_n$ act on a tableau $t$ by replacing  each entry of $t$ with its image under $\sigma$.  Note that this action respects the column relations and the Garnir relations.
\begin{proposition}[\cite{Fu}] \label{presentprop} For all $\lambda \vdash n$, the Specht module
$S^\lambda$ is generated by the column tabloids  subject only to the Garnir relations
$g^t_{c}$, where $t \in
\mathcal T_\lambda$ and  $c \in [m-1]$, where $m$ is the number of columns of $\lambda$.
\end{proposition}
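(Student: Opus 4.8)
\medskip
\noindent\textbf{Proof proposal.} The plan is to show that the module defined by the stated presentation has $S^\lambda$ as a quotient and also has dimension at most $\dim S^\lambda$, so that the two coincide. Precisely, let $N^\lambda\subseteq M^\lambda$ be the span of all the Garnir elements $g^t_c$ with $t\in\mathcal T_\lambda$ and $c\in[m-1]$, and set $\tilde S^\lambda:=M^\lambda/N^\lambda$; since $\sg_n$ permutes the $g^t_c$ (up to the column relations already built into $M^\lambda$), the subspace $N^\lambda$ is $\sg_n$-stable and $\tilde S^\lambda$ is an $\sg_n$-module. First I would recall the classical realization of $S^\lambda$ as the quotient of $M^\lambda$ by the span of \emph{all} Garnir-type relations $\bar t-\sum_\pi\bar{\pi t}$: each $g^t_c$ is one such relation (the signed sum over coset representatives of $\sg_A\times\sg_B$ in $\sg_{A\cup B}$, where $A$ is column $c$ of $t$ and $B$ is the single top box of column $c+1$), hence $g^t_c$ vanishes in $S^\lambda$ by Garnir's theorem. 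This gives a surjection $\pi\colon\tilde S^\lambda\twoheadrightarrow S^\lambda$, and it remains to prove $\dim\tilde S^\lambda\le\dim S^\lambda$.

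For that bound I would run a column-straightening argument, showing that the classes $\bar u$ with $u$ a \emph{standard} Young tableau of shape $\lambda$ already span $\tilde S^\lambda$ (there are exactly $\dim S^\lambda$ of them). The delicate point is that the presentation only supplies the ``simple'' Garnir relations $g^t_c$, which pair a whole column with just the top box of the next column, rather than the general Garnir relations $G_{A,B}$ pairing arbitrary subsets of two adjacent columns; so the plan splits into two steps. Step one is a lemma: every general Garnir element $G_{A,B}$ (with $A$ a bottom segment of column $c$, $B$ a top segment of column $c+1$, and $|A|+|B|$ one more than the length of column $c$) lies in $N^\lambda$. I would prove this by induction on $|B|$ — the base case $|B|=1$ forces $A$ to be all of column $c$, which is exactly $g^t_c$ — expanding $G_{A,B}$ by applying a simple Garnir relation to a suitable, generally \emph{non}-column-strict tableau and identifying the leftover terms as smaller $G_{A',B'}$'s; this is precisely why the relations $g^t_c$ are needed for all $t\in\mathcal T_\lambda$ and not only the column-strict ones. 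Step two is the standard straightening: given a column-strict but non-standard $t$, choose a row $r$ and column $c$ with $t(r,c)>t(r,c+1)$, rewrite $\bar t$ using $G_{A,B}$ with $A$ the boxes of column $c$ in rows $\ge r$ and $B$ the boxes of column $c+1$ in rows $\le r$, re-sort columns (tracking signs), and iterate; a suitable inversion/content statistic on column-strict tableaux strictly decreases, so the procedure terminates with $\bar t$ expressed as a $\Z$-combination of standard $\bar u$'s.

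Combining the two halves, $\pi$ is a surjection between vector spaces with $\dim\tilde S^\lambda\le\dim S^\lambda$, hence an isomorphism, which is the assertion; equivalently one may invoke the standard basis theorem for $S^\lambda$ to see that $\pi$ carries the spanning set $\{\bar u:u\text{ standard}\}$ onto a basis and is therefore injective. I expect the main obstacle to be Step one — proving that the simple Garnir relations $g^t_c$ generate all the general Garnir relations $G_{A,B}$, with the correct induction and the bookkeeping of signs from re-sorting columns — after which Step two is the classical straightening and the dimension count is immediate. (This is essentially the reasoning underlying Fulton's presentation \cite{Fu}.)
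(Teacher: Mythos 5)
The paper does not prove Proposition~\ref{presentprop}; it is imported verbatim from Fulton \cite[page 102, after Ex.~15]{Fu}, so there is no ``paper's proof'' against which to compare your attempt. That said, your plan is a reasonable reconstruction of the standard argument: realize the presented module $\tilde S^\lambda := M^\lambda/N^\lambda$ as surjecting onto the classical Specht module (since each $g^t_c$ is the Garnir element for $A=\text{column } c$, $B=\{\text{top box of column } c+1\}$, which vanishes in $S^\lambda$), then bound $\dim\tilde S^\lambda$ by showing the standard column tabloids span via straightening. The genuinely nontrivial content is exactly your ``Step one'' --- that the top-box Garnir elements $g^t_c$, taken over \emph{all} $t\in\mathcal T_\lambda$ (not just column-strict $t$), generate the span of the general Garnir elements $G_{A,B}$ modulo column relations --- and you correctly identify this as the obstacle. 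Your sketch of the induction on $|B|$ is plausible but is left at the level of a plan; the bookkeeping of which $G_{A',B'}$'s appear and with what signs after one application of a $g^t_c$ to a non-column-strict $t$ is where the actual work sits, and as written it is not a complete argument. Since you cannot see the paper, it is also worth flagging a contrast: for the \emph{different} presentation $\tilde S^\lambda$ of Theorem~\ref{newpresth} (whose Garnir relations $\tilde g^t_c$ sum over \emph{all} entries of all previous columns with a scalar correction), the authors avoid straightening entirely and instead compute the eigenvalues of a conjugacy-class-sum operator (Lemma~\ref{indlem}) to identify the kernel. That technique does not obviously apply to Fulton's top-box relations, because $g^t_c$ is not a symmetric sum over a product of Young subgroups, which is why a combinatorial straightening argument of the kind you sketch remains the natural route for Proposition~\ref{presentprop}.
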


We say that a Specht module $S^\lambda$, $\lambda \vdash n$,  is contained in an $\sg_n$-module $V$ if  $S^\lambda$ is isomorphic to a submodule of $V$, and we say that its multiplicity in $V$ is $c_\lambda$ if $c_\lambda$ is the largest integer  such that the direct sum of $c_\lambda$ copies of $S^\lambda$ is isomorphic to a submodule of $V$.

Given a partition $\lambda$, we use the notation $\lambda^*$ to denote the {\em conjugate partition}.  

\subsection{The linear operator}
Given partitions $\lambda_1$ and $\lambda_2$ viewed as Young diagrams, we will say that 
$\lambda_1$ and $\lambda_2$ are {\em compatible} if the  length of the last column of  $\lambda_1$ is greater than or equal to the length of the first column of $\lambda_2$.   In that case, we  let $\lambda_1 \conc \lambda_2$ denote the {\em concatenation} of $\lambda_1$ and $\lambda_2$, i.e., the Young diagram obtained by placing the Young diagram $\lambda_2$ just to the right of the Young diagram $\lambda_1$ and aligning the first rows of both. Note that $\lambda_1\conc \lambda_2$  is the partition obtained by componentwise addition of $\lambda_1$ and $\lambda_2$, and  that $\lambda_1 \conc \lambda_2 \vdash |\lambda_1| + |\lambda_2|$.  Note also that $(\lambda_1\conc \lambda_2)/ \lambda_1 = \lambda_2$.

\vspace{.1in}\noindent {\bf Example:} Let
$$\lambda_1= \ytableausetup
{mathmode, boxsize=1em}
\begin{ytableau}
\phantom{1} &\phantom{1} &\phantom{1} \\ \phantom{1} & \phantom{1} & \phantom{1} \\ \phantom{1} & \phantom{1} & \phantom{1} \\ \phantom{1} & \phantom{1} \\ \phantom{1} & \phantom{1} \\ \phantom{1} \end{ytableau}.
\quad \qquad \lambda_2= \ytableausetup
{mathmode, boxsize=1em}
\begin{ytableau}
\phantom{1} &\phantom{1} &\phantom{1} \\ \phantom{1} & \phantom{1}
\end{ytableau}
\quad \qquad \lambda_3= \ytableausetup
{mathmode, boxsize=1em}
\begin{ytableau}
\phantom{1} &\phantom{1} &\phantom{1} \\ \phantom{1} & \phantom{1} \\ \phantom{1} & \phantom{1} \\\phantom{1} & \phantom{1}
\end{ytableau}\,\,.$$
Then $\lambda_1$ and $\lambda_2$ are compatible but $\lambda_1$ and $\lambda_3$ are not, and so $\lambda_1$ and $\lambda_2$ can be concatenated  but  $\lambda_1$ and $\lambda_3$ cannot be concatenated.  The concatenation is $$\lambda_1 \conc \lambda_2 =  \ytableausetup
{mathmode, boxsize=1em}
\begin{ytableau}
\phantom{1} &\phantom{1} &\phantom{1} &\phantom{1} &\phantom{1} &\phantom{1}
\\ \phantom{1} & \phantom{1} & \phantom{1}& \phantom{1} & \phantom{1} \\ \phantom{1} & \phantom{1} & \phantom{1} \\ \phantom{1} & \phantom{1} \\ \phantom{1} & \phantom{1} \\ \phantom{1} \end{ytableau} \,\,.$$

\begin{lemma} \label{indlem} Let  $\lambda_1$ and $\lambda_2$ be partitions of $n_1$ and $n_2$, respectively, and let $d$ be any integer that is greater than or equal to the number of columns of $\lambda_1$ viewed as a Young diagram. Define the linear operator $ \varphi_d = \varphi^{\lambda_1,\lambda_2}_d$ on the induction product $$ X := S^{\lambda_1} \bullet S^{\lambda_2} $$
by
$$ \varphi_d =  dn_2 I - \sum_{\substack{i \in [n_1] \\ j \in [n_1+n_2] \setminus  [n_1]} } (i,j),$$
where the action of $(i,j)$ is on the right.  Then  the eigenvalues of $\varphi_d$ are nonnegative and
$$\ker( \varphi_d) = \begin{cases} S^{\lambda_1\conc\lambda_2} &\mbox{ if  $\lambda_1$ and $\lambda_2$ are compatible and $\lambda_1$ has $d$ columns} 
  \\ 0 &\mbox{otherwise.}  \end{cases}$$
\end{lemma}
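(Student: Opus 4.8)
The plan is to analyze $\varphi_d$ via its action on a convenient model of $X = S^{\lambda_1}\bullet S^{\lambda_2}$, using the presentation by column tabloids and Garnir relations (Proposition~\ref{presentprop}). First I would realize $X$ concretely: fix the standard embedding $\sg_{n_1}\times\sg_{n_2}\hookrightarrow\sg_{n_1+n_2}$ with $\sg_{n_1}$ acting on $[n_1]$ and $\sg_{n_2}$ on $[n_1+n_2]\setminus[n_1]$, and represent elements of $X$ by column tabloids of a ``skew-like'' shape consisting of a copy of $\lambda_1$ filled from $[n_1]$ adjacent to a copy of $\lambda_2$ filled from the complementary block, modulo the column relations and the Garnir relations \emph{internal} to each of the two pieces (but not across the two pieces). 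The operator $\sum_{i\in[n_1],\,j\notin[n_1]}(i,j)$ is exactly the sum of all transpositions that swap an entry of the $\lambda_1$-part with an entry of the $\lambda_2$-part; crucially it commutes with the full diagonal $\sg_{n_1+n_2}$-action (it is the image of a conjugation-invariant element of the group algebra up to the $I$ term), so $\varphi_d$ is $\sg_{n_1+n_2}$-equivariant, hence acts as a scalar on each isotypic component, and $\ker\varphi_d$ is a submodule. This reduces the problem to: (i) identify which irreducibles $S^\mu$ appear in $X$ (by Pieri / Littlewood--Richardson, since $X = S^{\lambda_1}\bullet S^{\lambda_2}$ has $S^\mu$ with multiplicity the LR coefficient $c^\mu_{\lambda_1\lambda_2}$), and (ii) compute the scalar by which $\varphi_d$ acts on each $S^\mu$-isotypic component, then show this scalar is nonnegative always and is zero exactly in the stated case.

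The key computation is step (ii), the eigenvalue on the $S^\mu$-component. I would compute it using central characters: the element $z:=\sum_{i\in[n_1],\,j\notin[n_1]}(i,j)$ can be written as $\tfrac12\big(C_{n_1+n_2} - C_{n_1} - C_{n_2}\big)$ where $C_N=\sum_{i<j\le N\text{ within the block}}(i,j)$ denotes the sum of \emph{all} transpositions inside a block of size $N$ (so $C_{n_1+n_2}$ runs over all pairs in $[n_1+n_2]$, $C_{n_1}$ over pairs in $[n_1]$, $C_{n_2}$ over pairs in the complement). Each $C_N$ is a central element of $\C\sg_N$ acting on an irreducible $S^\nu$, $\nu\vdash N$, by the scalar $n(\nu^*)-n(\nu)=\sum_{\text{cells }c}\mathrm{ct}(c)$, the sum of contents (equivalently $\binom{N}{2}$ times the normalized character, or $\sum_i\binom{\nu_i}{2}-\sum_j\binom{\nu^*_j}{2}$). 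On the subspace $X$, which lives inside $\mathrm{Ind}$ from $\sg_{n_1}\times\sg_{n_2}$, the elements $C_{n_1}$ and $C_{n_2}$ act by the fixed scalars $b_1:=\sum_c\mathrm{ct}_{\lambda_1}(c)$ and $b_2:=\sum_c\mathrm{ct}_{\lambda_2}(c)$ respectively, while $C_{n_1+n_2}$ acts on the $S^\mu$-component by $b_\mu:=\sum_c\mathrm{ct}_\mu(c)$. Hence on the $S^\mu$-component,
\begin{equation*}
\varphi_d \;=\; d\,n_2\,I - z \;=\; \Big(d\,n_2 - \tfrac12\big(b_\mu - b_1 - b_2\big)\Big) I.
\end{equation*}
So everything reduces to understanding $b_\mu - b_1 - b_2 = \sum_{c\in\mu/\!/}\mathrm{ct}$, the sum of contents of the cells of $\mu$ that are ``added'' to $\lambda_1\sqcup\lambda_2$; concretely, for each $\mu$ with $c^\mu_{\lambda_1\lambda_2}\ne 0$, one fixes an LR filling and $b_\mu-b_1-b_2$ equals the sum, over the cells occupied by $\lambda_2$'s contribution, of their contents \emph{in $\mu$}.

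The heart is then a combinatorial inequality: show that for every $\mu$ with $c^\mu_{\lambda_1\lambda_2}>0$ one has $\tfrac12(b_\mu-b_1-b_2)\le d\,n_2$, with equality iff $\mu=\lambda_1\conc\lambda_2$ and $d$ equals the number of columns of $\lambda_1$. The point is that placing the $n_2$ cells of $\lambda_2$ into columns of index $\ge\ell:=(\text{number of columns of }\lambda_1)$ maximizes the content sum, and the maximum content available is capped: a cell in column $j$ and row $i$ has content $j-i$, and an LR skew filling forces the cells contributed by $\lambda_2$ to sit weakly to the right in a way that bounds the total content above by $n_2\cdot(\text{columns of }\lambda_1) - (\text{something}\ge 0)$, achieved precisely when $\lambda_2$ is appended in one block to the right of $\lambda_1$, i.e. $\mu=\lambda_1\conc\lambda_2$ (which requires $\lambda_1,\lambda_2$ compatible). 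When $d>\ell$, even this maximizing $\mu$ gives a strictly positive eigenvalue, so the kernel is $0$; when $\lambda_1,\lambda_2$ are not compatible, $\lambda_1\conc\lambda_2$ is not a partition and no $\mu$ reaches the bound, again forcing $\ker\varphi_d=0$. I expect this content-maximization argument to be the main obstacle: one must carefully relate the LR/Pieri constraints on where $\lambda_2$'s cells can land to the content sum, and nail down the equality case. A clean way to organize it is induction on $n_2$ using the Pieri rule (adding one box at a time when $\lambda_2$ is a single row or column) together with the general LR rule reduced to iterated Pieri, tracking at each step that the added box has content $\le\ell-1$ with equality forcing it into the next fresh column atop the current shape. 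Once the eigenvalue formula and the inequality (with equality case) are in hand, nonnegativity of the spectrum and the identification $\ker\varphi_d\cong S^{\lambda_1\conc\lambda_2}$ (with multiplicity $c^{\lambda_1\conc\lambda_2}_{\lambda_1\lambda_2}=1$, since the concatenation tableau admits a unique LR filling) follow immediately.
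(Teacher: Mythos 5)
Your overall strategy matches the paper's: rewrite the operator in terms of central elements of $\C\sg_{n_1+n_2}$, $\C\sg_{n_1}$, $\C\sg_{n_2}$, use the content-sum formula for their action on irreducibles, and reduce to a combinatorial inequality governed by the Littlewood--Richardson rule. Two issues, one minor and one substantive.

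The minor one: your identity $z=\tfrac12(C_{n_1+n_2}-C_{n_1}-C_{n_2})$ is off by the factor $\tfrac12$. With $C_N$ defined (as you state) as the sum over \emph{transpositions}, the counts $\binom{n_1+n_2}{2}-\binom{n_1}{2}-\binom{n_2}{2}=n_1n_2$ already match $|z|$, so $z=C_{n_1+n_2}-C_{n_1}-C_{n_2}$ exactly. Consequently the eigenvalue on the $S^\mu$-component is $dn_2-(b_\mu-b_1-b_2)$, not $dn_2-\tfrac12(b_\mu-b_1-b_2)$; this is the paper's $a_\lambda=dn_2-C(\lambda)+C(\lambda_1)+C(\lambda_2)$. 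If you carried your $\tfrac12$ through, the subsequent inequality you would try to prove would be the wrong one.

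The substantive gap is exactly the part you flag as the ``main obstacle,'' and your proposed route through it does not obviously close. You suggest inducting on $n_2$ ``using the Pieri rule\ldots together with the general LR rule reduced to iterated Pieri,'' but the LR rule is not simply iterated Pieri (that reduction requires Jacobi--Trudi and introduces signs, which wreck a monotonicity argument), and your pointwise bound ``the added box has content $\le\ell-1$'' is already wrong for the first box added to $\lambda_1$, which can have content $\ell$. Your claimed upper bound of the form $n_2\ell-(\text{something}\ge 0)$ is also not the right target: the sharp bound is $dn_2+C(\lambda_2)$, and $C(\lambda_2)$ can have either sign. The paper instead fixes a specific LR filling of $\lambda/\lambda_1$ of content $\lambda_2$ and builds from it an explicit cell bijection $\psi$ from $\lambda/\lambda_1$ to the skew shape $\nu/d^{l(\lambda_2)}$ where $\nu=d^{l(\lambda_2)}\conc\lambda_2$, proving the content inequality $c_\lambda(r,s)\le c_\nu(\psi(r,s))$ cellwise and reading off the equality case directly. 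That bijection is the content of the lemma; without it (or an equivalent argument), your proof is not complete. Your remark that $c^{\lambda_1\conc\lambda_2}_{\lambda_1,\lambda_2}=1$ is correct but also needs an argument (the skew shape $(\lambda_1\conc\lambda_2)/\lambda_1$ is not literally a translate of $\lambda_2$ when $\lambda_1$ is non-rectangular), though this is a smaller point.
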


\begin{proof} Rewrite $\varphi_d $ as 
\begin{equation} \label{phieq}  \varphi_d = dn_2 I - \sum_{(i,j) \in \sg_{n_1+n_2} } (i,j) + \sum_{(i,j) \in \sg_{n_1}} (i,j) + \sum_{(i,j) \in \sg_{[n_1+n_2]\setminus [n_1]}} (i,j).
\end{equation}
For any partition $\lambda$,  let $$C(\lambda) = \sum_{(r,s) \in \lambda} c_\lambda(r,s),$$ where $c_\lambda(r,s)= s-r$ for the cell in row $r$ and column $s$ of $\lambda$. 
A well-known result from the representation theory of the symmetric groups (see \cite[Section I7, Example 7]{Ma}) states that the operator $\sum_{(a,b) \in \sg_n} (a,b)$ acts as multiplication by the scalar $C(\lambda)$ on the irreducible $S^\lambda$.  
Applying  this result to the summands of (\ref{phieq}),  we have  that $\varphi_d$ acts as the scalar 
 $$a_\lambda :=dn_2 - C(\lambda) +C(\lambda_1) + C(\lambda_2)
 $$ 
 on every instance of the irreducible $S^\lambda$, $\lambda \vdash n_1+n_2$, contained in $X$.

The irreducibles $S^\lambda$ contained in $X$ and their multiplicities are given by the Littlewood-Richardson rule, see  \cite{Fu, Ma, St2}.  In particular, any such $\lambda$ must contain $\lambda_1$ and the multiplicity of $S^\lambda$ in $X$ is the number of Littlewood-Richardson fillings of the skew shape $\lambda/\lambda_1$ of type $\lambda_2$.  It follows from the containment of $\lambda_1$ in $\lambda$ that 
$$a_\lambda =dn_2+C(\lambda_2)-\sum_{(r,s)\in \lambda/\lambda_1} c_\lambda(r,s).$$
Let $l(\lambda_2)$ be the length of $\lambda_2$ and consider the partition $d^{l(\lambda_2)}$, which is compatible with  $\lambda_2$, and 
let $\nu = d^{l(\lambda_2)} \conc \lambda_2$. Then
$$\sum_{(r,s) \in \nu/d^{l(\lambda_2)}} c_\nu(r,s) = \sum_{(r,s) \in \lambda_2} (c_{\lambda_2}(r,s)+d) = dn_2+C(\lambda_2).$$ Hence 
\begin{equation} \label{eigeq} a_\lambda =\sum_{(r,s) \in \nu/d^{l(\lambda_2)}} c_\nu(r,s)- \sum_{(r,s)\in \lambda/\lambda_1} c_\lambda(r,s).\end{equation}

 We will show that the eigenvalue $a_\lambda$ is nonnegative by constructing a bijection $\psi$ from the set of cells of $\lambda/\lambda_1$ to the set of cells of $\nu/d^{l(\lambda_2)}$ that satisfies 
\begin{equation} \label{decconteq} c_\lambda(r,s) \le c_\nu(\psi(r,s))\end{equation}
for all $(r,s) \in  \lambda/\lambda_1$. First fix a Littlewood-Richardson filling of $\lambda/\lambda_1$ of type $\lambda_2$.
Since the filling is column strict, for each  $i \in [l(\lambda_2)]$, the entry $i$ of the filling occurs in $(\lambda_2)_i$ distinct columns of $\lambda/\lambda_1$, where $(\lambda)_i$ denotes the $i$th  part of a partition $\lambda$. Let $(r,s)$ be a cell of  $ \lambda/\lambda_1$. Suppose that $(r,s)$ contains the $j$th occurrence (from the left) of entry $i$ of the filling and let $\psi(r,s)=(i, d+j)$.    It is not difficult to see that $\psi$ is a well defined bijection  from the set of cells of $\lambda/\lambda_1$ to the set of cells of $\nu/d^{l(\lambda_2)}$. Next we show that (\ref{decconteq}) holds.

We use the property of  Littlewood-Richardson fillings  that the rows contain no entries larger than the row index.  This yields $r \ge i$.  We claim that $s \le d+j$. Indeed, if $s\le \lambda_1$ then $s\le d < d+j$.  On the other hand, let $s > \lambda_1$.    It follows from column strictness of  Littlewood-Richardson fillings and the property above that  $r=i$ and $i$ is the entry of each cell $(r,m)$, $m= \lambda_1+1,\dots, s$.   This implies that $j \ge s-\lambda_1$, which implies that $d+j \ge d+s-\lambda_1 \ge s$.  Therefore the claim holds in both cases, which implies   $$c_\lambda(r,s) = s-r \le d+j - i = c_\nu(i,d+j) = c_\nu(\psi(r,s)).$$ Thus the inequality
(\ref{decconteq}) holds for all $(r,s)$ in $\lambda/\lambda_1$.  Looking back at the proof of the inequality, observe that equality in (\ref{decconteq}) holds if and only if $s > \lambda_1$, $j= s-\lambda_1$, and $(\lambda_1)_i=d$, for all $i=1,\dots, l(\lambda_2)$, which means that equality holds if and only if $\lambda = \lambda_1 \conc \lambda_2 $ and $\lambda_1$ has $d$ columns.  

Now by (\ref{eigeq}) and (\ref{decconteq}),
$$ a_\lambda =\sum_{(r,s) \in \lambda/\lambda_1} (c_\nu(\psi(r,s))- c_\lambda(r,s) )\ge 0.$$
The above conditions on equality holding in (\ref{decconteq}) allows us to conclude that the eigenvalue $ a_\lambda $ is $0$ if and only if $\lambda = \lambda_1 \conc \lambda_2 $ and $\lambda_1$ has $d$ columns. 
\end{proof}

 Lemma~\ref{indlem} can be used to give a  presentation for Specht modules different from Fulton's presentation given in Proposition~\ref{presentprop}.  Indeed, let $\lambda$ be a partition of $n$ with $m$ columns. For $t \in \mathcal T_\lambda$ and $c \in \{2,\dots,m\} $, define the {\em new} Garnir relation 
\begin{equation} \label{newgareq} \tilde g_c^t := (c-1) l_c \, \bar t - \sum_s \bar s, \end{equation}
where $l_c$ is the length of column $c$ and the sum is over all tableaux $s$ that can be obtained from $t$ by switching any element of column $c$ with any element of any previous column.   Now let
 $$\tilde S^\lambda := M^\lambda / \langle \tilde g^t_c : t \in \mathcal T_\lambda, \, 2 \le c \le m \rangle .$$ 
 
 \begin{theorem} \label{newpresth} For all $\lambda \vdash n$,
 $$\tilde S^\lambda \cong S^\lambda.$$
 \end{theorem}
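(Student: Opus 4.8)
The plan is to realize $\tilde S^\lambda$ as the kernel of a suitable linear operator obtained by iterating the operator $\varphi_d$ of Lemma~\ref{indlem}, column by column. Write $\lambda^* = (\lambda^*_1 \ge \lambda^*_2 \ge \dots \ge \lambda^*_m)$ for the column lengths of $\lambda$, so that column $c$ has length $l_c = \lambda^*_c$. For $c \in \{1,\dots,m\}$ let $\mu^{(c)}$ denote the partition whose diagram consists of the first $c$ columns of $\lambda$; thus $\mu^{(1)} = 1^{l_1}$, $\mu^{(m)} = \lambda$, and $\mu^{(c)} = \mu^{(c-1)} \conc 1^{l_c}$, which is a legitimate concatenation since $l_{c-1} \ge l_c$. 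The idea is to build $S^\lambda$ inductively inside the induction product $S^{1^{l_1}} \bullet S^{1^{l_2}} \bullet \dots \bullet S^{1^{l_m}}$: at the $c$-th stage, having produced a copy of $S^{\mu^{(c-1)}}$, apply $\varphi^{\,\mu^{(c-1)},\,1^{l_c}}_{c-1}$ on $S^{\mu^{(c-1)}} \bullet S^{1^{l_c}}$; since $\mu^{(c-1)}$ has exactly $c-1$ columns and is compatible with $1^{l_c}$ (as $l_{c-1}\ge l_c$), Lemma~\ref{indlem} gives that the kernel of this operator is precisely $S^{\mu^{(c-1)} \conc 1^{l_c}} = S^{\mu^{(c)}}$. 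After $m$ stages we obtain $S^\lambda$.

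The second half of the argument is to match this kernel description with the presentation defining $\tilde S^\lambda$. First I would observe that $M^\lambda$, generated by column tabloids modulo only the column relations, is exactly the induction product $S^{1^{l_1}} \bullet S^{1^{l_2}} \bullet \dots \bullet S^{1^{l_m}}$ of sign representations: each column tabloid is an antisymmetrized product of the entries of the successive columns, and these span the induction product, with the column relations being the only relations. Next I would check that, under this identification, the operator $\varphi^{\,\mu^{(c-1)},\,1^{l_c}}_{c-1}$ restricted to the relevant factor acts on a column tabloid $\bar t$ by $(c-1)l_c\,\bar t - \sum_s \bar s$, where $s$ ranges over tableaux obtained from $t$ by swapping an entry of column $c$ with an entry of one of columns $1,\dots,c-1$: the scalar term $dn_2 I$ with $d = c-1$, $n_2 = l_c$ gives $(c-1)l_c\,\bar t$, and the sum $\sum_{i\in[n_1],\,j} (i,j)$ over transpositions mixing the first $n_1 = l_1+\dots+l_{c-1}$ positions with the last $l_c$ positions produces exactly $\sum_s \bar s$ (here using antisymmetry of the sign factors to absorb signs and the fact that transpositions internal to a single column act trivially on tabloids). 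Hence $\tilde g^t_c$ is precisely $\varphi^{\,\mu^{(c-1)},\,1^{l_c}}_{c-1}(\bar t)$, so the ideal generated by the $\tilde g^t_c$ for fixed $c$ and all $t$ is the image of that operator on the appropriate induction subproduct.

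Finally I would assemble these facts: quotienting $M^\lambda$ by $\langle \tilde g^t_m : t\rangle$ and identifying the quotient with the kernel of $\varphi^{\,\mu^{(m-1)},\,1^{l_m}}_{m-1}$ (using that this operator is self-adjoint with respect to a suitable invariant inner product and has nonnegative eigenvalues, so that image and kernel are complementary), then iterating downward in $c$, collapses $M^\lambda / \langle \tilde g^t_c : \text{all } t, c\rangle$ onto $S^\lambda$. I expect the main obstacle to be the bookkeeping in this last step: one must verify that imposing the relations $\tilde g^t_c$ for all $c$ \emph{simultaneously} yields the same object as the iterated kernel construction — i.e., that the operators for different columns interact correctly and that no relations are lost or overcounted when passing from "kernel of a composite" to "quotient by a sum of ideals." This should follow from the fact that each $\varphi_d$ is semisimple with kernel a single isotypic Specht component, so the successive kernels are canonically nested, but making the dimension count airtight — showing $\dim \tilde S^\lambda = \dim S^\lambda = f^\lambda$ — is where the care is needed, and one likely route is to show both that $\tilde S^\lambda$ surjects onto $S^\lambda$ (the Garnir relations $\tilde g^t_c$ hold in $S^\lambda$, which is a direct computation using Proposition~\ref{presentprop}) and that $\dim \tilde S^\lambda \le f^\lambda$ (via the kernel bound from Lemma~\ref{indlem} applied iteratively).
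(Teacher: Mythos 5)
Your proposal uses the same core machinery as the paper: build $S^\lambda$ column by column inside $\sgn_{l_1}\bullet\cdots\bullet\sgn_{l_m}$, identify the ideal $\langle \tilde g^t_c : t\rangle$ with $\im\varphi_{c-1}$, and invoke Lemma~\ref{indlem}. The paper organizes this as a clean induction on the number of columns, observing that $M^\lambda=M^{\lambda'}\bullet\sgn_{l_m}$ so that quotienting first by the relations for columns $\le m-1$ gives $\tilde S^{\lambda'}\bullet\sgn_{l_m}$, and then applying the induction hypothesis and the splitting $X\cong\ker\varphi_{m-1}\oplus\im\varphi_{m-1}$ (which holds because $\varphi_d$ acts by nonnegative scalars on isotypic components), thereby dispensing with the ``simultaneous vs.\ iterated relations'' and dimension-count concerns you flag but leave unresolved at the end.
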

 
 \begin{proof} The proof  is by induction on the number $m$ of columns of $\lambda$. The result is obvious in the base case $m=1$.
Assume that $m \ge 2$ and that the result is true for all partitions with  fewer columns. Let $\lambda^\prime$ be  the partition obtained from $\lambda$ by removing its last column. Since $M^\lambda = \sgn_{l_{c_1}} \bullet \,\, \cdots \,\, \bullet \sgn_{l_{c_m}} = M^{\lambda^\prime} \bullet \sgn_{l_{c_m}} $, we have $$\tilde S^{\lambda} \cong  ( \tilde S^{\lambda^\prime}  \bullet \sgn_{l_m}) / \langle \tilde g_m^t : t \in \mathcal T_{\lambda}\rangle.
$$
By the induction hypothesis, $\tilde S^{\lambda^\prime} \cong S^{\lambda^\prime}$.  
Since $\lambda^\prime$ is clearly compatible with the partition $1^{l_m}$  and $\lambda = \lambda^\prime \conc 1^{l_m} $,   by Lemma~\ref{indlem}, 
$$\tilde S^{\lambda} \cong   ( S^{\lambda^\prime} \bullet \sgn_{l_m} ) /  \im \varphi_{m-1} \cong S^{\lambda},$$ since $\langle \tilde g_m^t : t \in \mathcal T_{\lambda} \rangle= \im \varphi_{m-1}$.
 \end{proof}

 \begin{remark} \rm{Theorem~\ref{newpresth} in the special case that $\lambda$ has just two columns was obtained in  \cite{BF} using different techniques.} \end{remark}
 
The following  generalization of Lemma~\ref{indlem} is an easy consequence of Lemma~\ref{indlem}.

\begin{lemma} \label{indlem2} For $i=1,2$, let $Y_i$ be an  $\sg_{n_i}$-module. Let $d$ be an integer that is greater than or equal to the number of columns of every irreducible in $Y_1$.  Define the linear operator $ \varphi_d = \varphi^{Y_1,Y_2}_d$ on the induction product $$ X := Y_1 \bullet Y_2 $$  by
$$ \varphi_d :=  dn_2 I - \sum_{\substack{i \in [n_1] \\ j \in [n_1+n_2] \setminus  [n_1]} } (i,j),$$
where the action of $(i,j)$ is on the right.  Then $S^\lambda$ is an irreducible in $\ker \varphi_d$ if and only if  $\lambda$ has the form $\lambda_1\conc\lambda_2$, where 
\begin{enumerate}
\item $S^{\lambda_i}$ is an irreducible of $Y_i$, $i=1,2$ 
\item $\lambda_1$ has $d$ columns and is compatible with $\lambda_2$. 
\end{enumerate}
\end{lemma}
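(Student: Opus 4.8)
The plan is to deduce Lemma~\ref{indlem2} from Lemma~\ref{indlem} by passing to the irreducible constituents of $Y_1$ and $Y_2$. First I would fix decompositions $Y_1 \cong \bigoplus_i S^{\alpha_i}$ and $Y_2 \cong \bigoplus_j S^{\beta_j}$ into Specht modules, each $\alpha_i$ listed with multiplicity and a partition of $n_1$, each $\beta_j$ a partition of $n_2$; by hypothesis every $\alpha_i$ has at most $d$ columns. Since $\bullet$ is distributive over direct sums,
\[ X = Y_1\bullet Y_2 \;\cong\; \bigoplus_{i,j} S^{\alpha_i}\bullet S^{\beta_j}, \]
an internal direct sum of $\sg_{n_1+n_2}$-submodules of $X$.

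The key step is to check that $\varphi_d = \varphi_d^{Y_1,Y_2}$ respects this decomposition and restricts on each summand to the operator of Lemma~\ref{indlem}. Since $\varphi_d$ is a linear combination of the identity and the transpositions $(i,j)$ with $i\in[n_1]$ and $j\in[n_1+n_2]\setminus[n_1]$, and each such transposition is an element of $\sg_{n_1+n_2}$, it carries every $\sg_{n_1+n_2}$-submodule of $X$ into itself; in particular $\varphi_d$ preserves each $S^{\alpha_i}\bullet S^{\beta_j}$. Moreover, because all irreducibles of $Y_1$ have the same size $n_1$ and all irreducibles of $Y_2$ have the same size $n_2$, the constant term $dn_2 I$ and the set of transpositions defining $\varphi_d^{Y_1,Y_2}$ coincide with those defining $\varphi_d^{\alpha_i,\beta_j}$, so the restriction of $\varphi_d^{Y_1,Y_2}$ to $S^{\alpha_i}\bullet S^{\beta_j}$ is exactly $\varphi_d^{\alpha_i,\beta_j}$. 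As $d$ is at least the number of columns of $\alpha_i$, Lemma~\ref{indlem} applies to each factor, giving $\ker\varphi_d^{\alpha_i,\beta_j} = S^{\alpha_i\conc\beta_j}$ when $\alpha_i$ and $\beta_j$ are compatible and $\alpha_i$ has exactly $d$ columns, and $\ker\varphi_d^{\alpha_i,\beta_j} = 0$ otherwise.

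Finally I would reassemble: because $\varphi_d$ preserves each summand, $\ker\varphi_d = \bigoplus_{i,j}\ker\varphi_d^{\alpha_i,\beta_j}$, so $\ker\varphi_d$ is the direct sum of the Specht modules $S^{\alpha_i\conc\beta_j}$ over those pairs $(i,j)$ for which $\alpha_i$ has $d$ columns and is compatible with $\beta_j$. Hence $S^\lambda$ occurs in $\ker\varphi_d$ precisely when $\lambda = \alpha_i\conc\beta_j$ for such a pair, that is, precisely when $\lambda = \lambda_1\conc\lambda_2$ with $S^{\lambda_1}$ an irreducible of $Y_1$, $S^{\lambda_2}$ an irreducible of $Y_2$, $\lambda_1$ having $d$ columns, and $\lambda_1$ compatible with $\lambda_2$, which is the assertion. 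I do not anticipate any real obstacle: the argument is simply ``decompose, apply Lemma~\ref{indlem} summand by summand, recombine,'' and the only point needing a moment's care is the identification of the restriction of $\varphi_d^{Y_1,Y_2}$ with $\varphi_d^{\alpha_i,\beta_j}$, which follows at once from the uniformity of the sizes $n_1$ and $n_2$ across the constituents.
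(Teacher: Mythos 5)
Your proof has the same skeleton as the paper's: decompose $Y_1$ and $Y_2$ into irreducibles, observe that $\varphi_d^{Y_1,Y_2}$ restricts on each induction-product summand $S^{\alpha}\bullet S^{\beta}$ (with $S^\alpha$ a constituent of $Y_1$ and $S^\beta$ of $Y_2$) to the operator $\varphi_d^{\alpha,\beta}$ of Lemma~\ref{indlem}, apply that lemma summand by summand, and reassemble. The one place that needs an argument---and the place where your write-up goes wrong---is the claim that $\varphi_d$ preserves each summand. You justify it by saying that each transposition $(i,j)$, being an element of $\sg_{n_1+n_2}$, ``carries every $\sg_{n_1+n_2}$-submodule of $X$ into itself.'' That reasoning would be valid if $(i,j)$ acted on the left, since a left submodule is by definition closed under the left action. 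But the lemma stipulates that $(i,j)$ acts on the \emph{right}: a right action by a group element commutes with the left module structure and is therefore a $\sg_{n_1+n_2}$-endomorphism of $X$; such a map preserves isotypic components but has no reason to preserve a chosen decomposition into irreducibles, and in particular need not preserve the summands $S^{\alpha}\bullet S^{\beta}$. (In fact the right action of an \emph{individual} transposition $(i,j)$ with $i\in[n_1]$, $j\notin[n_1]$ is not even well-defined on $X$; only the full sum $\sum_{i,j}(i,j)$ is, precisely because it lies in the centralizer of $\sg_{n_1}\times\sg_{n_2}$ in $\C\sg_{n_1+n_2}$.)

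The repair is to use the rewriting (\ref{phieq}) of $\varphi_d$ from the proof of Lemma~\ref{indlem}: $\varphi_d = dn_2 I - z_0 + z_1$, where $z_0 = \sum_{(i,j)\in\sg_{n_1+n_2}}(i,j)$ is central in $\C\sg_{n_1+n_2}$ and $z_1 = \sum_{(i,j)\in\sg_{n_1}}(i,j) + \sum_{(i,j)\in\sg_{[n_1+n_2]\setminus[n_1]}}(i,j)$ lies in $\C(\sg_{n_1}\times\sg_{n_2})$. Because $z_0$ is central, its right action coincides with its left action and so preserves every left submodule. Because $z_1\in\C(\sg_{n_1}\times\sg_{n_2})$, its right action passes through the tensor defining the induction product and becomes the left action of $z_1$ on $Y_1\otimes Y_2 = \bigoplus S^{\alpha}\otimes S^{\beta}$; as $z_1$ is central in $\C(\sg_{n_1}\times\sg_{n_2})$, it acts by a scalar on each irreducible $S^{\alpha}\otimes S^{\beta}$ and hence preserves each $S^{\alpha}\bullet S^{\beta}$. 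With this justification in place, the rest of your argument goes through and recovers the paper's proof.
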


\begin{proof} Let $S^\lambda$ be an irreducible of $X$. Then $S^\lambda$ is in the kernel of $\varphi_d$ if and only if $S^\lambda$  is in the kernel of the restriction of $\varphi_d$ to the induction product $S^{\lambda_1} \bullet S^{\lambda_2}$ for some irreducible $S^{\lambda_1}$ of $Y_1$, and $S^{\lambda_2}$ of $Y_2$. By Lemma~\ref{indlem}, the kernel of this restriction is $S^{\lambda_1\conc \lambda_2}$ if $\lambda_1$ has $d$ columns and is compatible with $\lambda_2$.  Otherwise the kernel is $0$. Hence $\lambda$ has the desired form if and only if $S^\lambda$ is in the
 kernel of $\varphi_d$.
\end{proof}

\section{Decomposition  results} \label{generalsec}

\subsection{Preliminary results on free Filippov Algebras} \label{Filsubsec}

In Appendix 1 of \cite{DI}, a proof that the   generalized Jacobi relations (\ref{type1}) are equivalent to the relations  
\begin{align} \label{filipeq} & [[x_1, x_2, \ldots,  x_n], y_{1},\ldots , y_{n-1}] \\
\nonumber = &\sum_{i=1}^{n} [[x_1, x_{2}, \ldots , x_{i-1},y_{1}, x_{i+1}, \ldots , x_{n}],x_{i},y_{2},\ldots , y_{n-1}]
\end{align}
 is given.
 This gives an alternative useful presentation of $\rho_{n,k}$ for all $n,k$.

An $n$-bracketed word on a set $A$
is called an  $n$-{\it comb} (or just  {\it comb}) if it has  the form
\begin{equation} \label{combeq} [[\cdots [[a_0,a_{1,1},\dots, a_{1,n-1}],a_{2,1},\dots, a_{2,n-1}] ,\dots ], a_{k,1},\dots, a_{k,n-1}],
\end{equation}
where $a_0,a_{i,j} \in A$.

\begin{proposition} \label{coneprop} Let $Lie_{n,k}(X)$ be the component of the free Filippov $n$-algebra on a set $X$ spanned by $n$-bracketed words with $k$ brackets.  Then 
 the combs in $Lie_{n,k}(X)$ span $Lie_{n,k}(X)$.
\end{proposition}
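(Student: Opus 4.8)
The plan is to show that any $n$-bracketed word with $k$ brackets can be rewritten as a linear combination of combs, by induction on a suitable complexity statistic. Recall that an $n$-bracketed word $w$ is built as $[w_1, w_2, \dots, w_n]$, where each $w_i$ is either a generator in $X$ or a shorter $n$-bracketed word. Since a comb is characterized by having all its ``internal'' brackets nested along a single leftmost branch, with all other arguments being generators, a natural measure of how far $w$ is from being a comb is, say, the total number of brackets appearing strictly inside the arguments $w_2, \dots, w_n$ (equivalently, the number of brackets not on the leftmost path), or a lexicographic refinement thereof. A word of complexity $0$ is exactly a comb, which gives the base case.

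The inductive step is where the Filippov relation does the work. Given $w = [w_1, \dots, w_n]$ of positive complexity, first apply the induction hypothesis (on fewer brackets, or on subwords) to replace each argument $w_i$ by a linear combination of combs; by $n$-linearity we may assume each $w_i$ is itself a comb. If $w_1$ is a comb of the form \eqref{combeq} with at least one bracket, then $w$ has the shape $[[u_1, \dots, u_n], w_2, \dots, w_n]$ with $[u_1,\dots,u_n]$ a comb, and we can apply the relation \eqref{filipeq} (with $x_i = u_i$ and $y_j = w_j$) to express $w$ as a signed sum of terms $[[u_1, \dots, u_{i-1}, w_2, u_{i+1}, \dots, u_n], u_i, w_3, \dots, w_n]$. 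In each such term the newly bracketed argument $w_2$ (which was a comb, hence has strictly fewer brackets than $w$) has been pushed one level deeper but the overall structure has the outer bracket's non-leftmost slots filled by the comb-pieces $u_i, w_3, \dots, w_n$, which are combs, and the leftmost slot is again an $n$-fold bracket of combs; I would check that the chosen complexity statistic strictly decreases (or decreases in the lexicographic refinement), so induction applies. If instead $w_1 \in X$ but some $w_i$ ($i \ge 2$) is a comb with a bracket, one first uses antisymmetry \eqref{antsym} to move a bracketed argument into — or, if $w_1$ is already a generator and every $w_i$ with $i\ge 2$ is a single generator, $w$ is already a comb — and then argue as before; here one must be slightly careful that swapping arguments via \eqref{antsym} does not increase complexity, which holds because antisymmetry only permutes the top-level arguments.

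The main obstacle I anticipate is bookkeeping: choosing the complexity statistic so that \emph{every} term produced by a single application of \eqref{filipeq} (together with any antisymmetry normalization and any re-expansion of the inner arguments into combs) is strictly smaller, so that the induction is well-founded. A single lexicographic pair — for instance, (number of brackets not on the leftmost branch, then the depth of the deepest such bracket) — is likely to suffice, but verifying the strict decrease in the term where $w_2$ gets absorbed into the inner bracket requires care, since that operation moves a sub-comb deeper rather than removing brackets outright. Once the right statistic is pinned down, the rest is a routine induction using only $n$-linearity, \eqref{antsym}, and \eqref{filipeq}.
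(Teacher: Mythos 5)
Your overall strategy --- induction using the rewritten Jacobi identity \eqref{filipeq}, reducing to the case where the outer arguments are combs, and applying \eqref{filipeq} to the first two arguments --- is exactly the route taken in the paper. But the specific complexity statistic you propose has a genuine gap, and you were right to be nervous about it. Take $w=[w_1,\dots,w_n]$ with each $w_i$ a comb, $1\le b(w_1)\le b(w_2)$, and write $w_1=[u_1,\dots,u_n]$. For the terms of \eqref{filipeq} with $i\ge 2$, namely $[[u_1,\dots,u_{i-1},w_2,u_{i+1},\dots,u_n],u_i,w_3,\dots,w_n]$, the $b(w_2)$ brackets of $w_2$ were off the leftmost branch before and remain off the leftmost branch after (they have merely been pushed one level deeper, now sitting at a non-leftmost slot of the inner bracket). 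Since $u_i$ is a single letter for $i\ge 2$, the total count of brackets not on the leftmost branch is \emph{unchanged}: it is $b(w_2)+b(w_3)+\cdots+b(w_n)$ both before and after. So your primary coordinate does not strictly decrease. Worse, your proposed tiebreaker (depth of the deepest off-path bracket) \emph{increases} in precisely these terms, so a lexicographic pair with that secondary key runs the wrong way and the induction is not well-founded as stated.

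The paper's fix uses a different measure. First it inducts on the number of brackets $b(w)$. Within that, it defines the \emph{degree} $d(w)=\max_i b(w_i)$ and observes that \eqref{filipeq} strictly \emph{increases} this degree: in every term the inner bracket $[u_1,\dots,u_{i-1},w_2,u_{i+1},\dots,u_n]$ has $1+b(w_2)+\sum_{j\ne i}b(u_j)\ge d(w)+1$ brackets, while all other arguments have at most $d(w)$. Since the degree is bounded above by $b(w)-1$, finitely many applications of \eqref{filipeq} drive the degree up to $b(w)-1$, at which point $n-1$ of the arguments are single letters, and the outer induction on $b(w)$ applies to the one argument with all the brackets. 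In effect, the well-founded statistic is the lexicographic pair $(b(w),\,b(w)-1-d(w))$, and the key computation is that the degree goes up, not that off-path brackets go down. If you replace your statistic with this one, your argument goes through; note also that the preliminary step of first converting each $w_i$ into a comb is not actually needed once you use the degree.
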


\begin{proof} Let
$w$ be a bracketed word in the free Filippov $n$-algebra. We will prove by induction on  the number  $b(w)$ of brackets of $w$ that $w$ is a linear combination of combs.  If $b(w)=0,1$ then $w$ is clearly a comb.  Now let $b(w) \ge 2$ and
$w=[w_1,w_2,\dots,w_n]$.
Define {\it degree} $d(w) :=   \max_{i\in [n]} b(w_i)$.  

We claim that if $w$  has the highest possible degree $b(w)-1$ then $w$ is a linear combination of combs.  Indeed, in this case  all but one $w_i$ is a single letter.  Using antisymmetry of the bracket, we can assume without loss of generality that $b(w_1) = b(w)-1$ and $b(w_i) = 0$ for $i=2,\dots,n$.  By induction $w_1$ is a linear combination of combs, which implies by multilinearity of the bracket that $w$ is a linear combination of combs. 

Now let $d(w)< b(w)-1$. We only need to show that  $w$ is equal to a linear combination of bracketed words of higher degree.  Clearly there are at least two $w_i$ for which $b(w_i) >0$. .  By antisymmetry of the bracket, without loss of generality, we can assume that $0 < b(w_1) \le b(w_2) $ and that $d(w) = b(w_2)$.  Let $w_1 = [u_1,\dots,u_n]$.  Using the alternative generalized Jacobi relation given in (\ref{filipeq}), we have
$$w= \sum_{i=1}^n [[u_1,u_2, \dots, u_{i-1}, w_2, u_{i+1}, \dots, u_n], u_i, w_3,w_4,\dots, w_n].$$
Note that the degree of each term  is greater than or equal to $b(w_2) +1= d(w)+1$.  Thus $w$ is a sum of bracketed words of degree greater than $d(w)$.  
\end{proof}

Let $V_{n,k}$ be the multilinear component of the vector space generated by $n$-combs  on $[k(n-1)+1]$  subject only to the anticommuting relations, but not the generalized Jacobi relations.  Clearly $V_{n,k}$ is  invariant under the action of $\sg_{k(n-1)+1}$ on the multilinear component of the full vector space of bracketed permutations on $[k(n-1)+1]$ subject only to the anticommuting relations.  By Proposition~\ref{coneprop}, $\rho_{n,k}$ is a quotient (or submodule) of $V_{n,k}$. Note that as $\sg_{k(n-1)+1}$-modules,
 \begin{equation} V_{n,k} \cong   \sgn_{n}\bullet \sgn_{n-1}^{\bullet (k-1)}. 
 \end{equation} 
As a consequence, by Pieri's rule, we have the following useful result.

\begin{proposition} \label{lekprop} Let $\lambda$ be a partition of $k(n-1) +1$.  If $S^\lambda$ is  in $\rho_{n,k}$ then the number of columns of $\lambda$, viewed as a Young diagram, is at most $k$.
\end{proposition}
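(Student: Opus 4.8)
The plan is to read off the constraint directly from the presentation $V_{n,k} \cong \sgn_n \bullet \sgn_{n-1}^{\bullet(k-1)}$ together with the fact, recorded just above, that $\rho_{n,k}$ is a quotient of $V_{n,k}$ as $\sg_{k(n-1)+1}$-modules. Since every irreducible of a quotient module is an irreducible of the module itself, it suffices to show that every $S^\lambda$ appearing in $V_{n,k}$ has at most $k$ columns, i.e.\ $\lambda_1^* \le k$ where $\lambda^*$ is the conjugate partition. Equivalently, working with conjugates, it suffices to show that every irreducible of $V_{n,k}$ has at most $k$ \emph{rows}.

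First I would recall Pieri's rule in its column form: for a partition $\mu$ and a single column $1^r$, the induction product $S^\mu \bullet \sgn_r = S^\mu \bullet S^{1^r}$ decomposes as $\bigoplus_\nu S^\nu$, where $\nu$ ranges over partitions obtained from $\mu$ by adding $r$ boxes, no two in the same row (a vertical strip of size $r$). In particular, adding a vertical strip increases the number of columns of a Young diagram by at most one. Now build up $V_{n,k}$ one factor at a time: start with $S^{1^n}$, which has one column, then tensor (induction-product) successively with $\sgn_{n-1} = S^{1^{n-1}}$ a total of $k-1$ times. At each step the number of columns of any constituent can grow by at most $1$, so after all $k-1$ steps every constituent $S^\lambda$ of $V_{n,k}$ has at most $1 + (k-1) = k$ columns. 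Since $\rho_{n,k}$ is a quotient of $V_{n,k}$, the same bound holds for every $S^\lambda$ occurring in $\rho_{n,k}$, which is the claim.

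I do not anticipate a genuine obstacle here — the argument is a routine application of Pieri's rule, and the only thing to be careful about is the bookkeeping on columns versus rows (one should apply the column/conjugate version of Pieri, or equivalently conjugate everything and apply the standard horizontal-strip version, using that conjugation sends $S^{1^r}$ to the trivial module $S^r$ and that $(A \bullet B)^* \cong A^* \bullet B^*$ under the sign twist). One should also note explicitly that "$S^\lambda$ is in $\rho_{n,k}$" in the statement means $S^\lambda$ is a constituent (a submodule, equivalently a quotient, by semisimplicity in characteristic $0$), so that passing from $V_{n,k}$ to its quotient $\rho_{n,k}$ indeed only removes constituents and never creates new ones.
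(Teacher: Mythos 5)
Your argument is correct and is essentially the paper's own proof: the authors note just above the proposition that $\rho_{n,k}$ is a quotient of $V_{n,k} \cong \sgn_n \bullet \sgn_{n-1}^{\bullet(k-1)}$ and then invoke Pieri's rule in exactly the way you spell out. Your write-up just makes explicit the bookkeeping (vertical strips add at most one column per factor) that the paper leaves implicit.
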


\subsection{Proof of Theorem~\ref{decomposition_thm}} Let $\hat \rho_{n,k}$ be the subspace of the free Filippov $n$-algebra on ${[k(n-2)+2]}$ spanned by bracketed words of type $1^{k(n-2)+1}k$, where the type of a bracketed word is the weak composition $(\mu_1,\mu_2,\dots,\mu_{k(n-2)+2})$ with $\mu_i$ equaling the number of $i$'s in the bracketed word.  This means that the letters $1,2,\dots, k(n-2)+1$ each occur once in the bracketed words generating $\hat \rho_{n,k}$  and the letter $b:=k(n-2)+2$ occurs $k$ times. The symmetric group $\sg_{k(n-2)+1}$ acts on $\hat \rho_{n,k}$ by replacing each letter $1,\dots, k(n-2)+1$  in a bracketed word with its image under a permutation in $\sg_{k(n-2)+1}$, leaving $b$ fixed.

We also need  an extended definition of  Specht modules that allows for repeated entries.  We define a {\it tableau} to  be a filling of a shape $\lambda$ with positive integer entries. The
{\it type} of a tableau $t$  is the weak composition whose $i$th entry is the number of $i$’s
appearing in $t$.  For each partition $\lambda$ of $k(n-1)+1$, let   $\hat {\mathcal T}_\lambda$ be the set of tableaux of shape $\lambda$ and type $1^{k(n-2)+1} k$.  The column relations, column tabloids, and Garnir relations on $\hat {\mathcal T}_\lambda$ are the same as on $\mathcal T_\lambda$ given in Section~\ref{prelimsubsec}.  Now let $\hat S^\lambda$ be the vector space generated by column tabloids $\bar t$, where $t\in \hat {\mathcal T}_{\lambda} $, subject to the Garnir relations $g^t_c$.   The symmetric group $\sg_{k(n-2)+1}$ acts on $\bar t$ by replacing each entry $1,\dots, k(n-2)+1$  of $t$ with its image under a permutation in $\sg_{k(n-2)+1}$, leaving $b$ fixed.

 In \cite[Proposition~43]{DI}, the idea of obtaining Filippov $(n-1)$-algebras from Filippov $n$-algebras is discussed.  We use this idea in the proof of the following result.
 
 \begin{lemma} \label{hatlem2} Let $n\ge 2$ and $k \ge 1$. 
  Then 
\begin{equation} \label{rho_eqn} \hat \rho_{n,k} \cong_{\sg_{k(n-2)+1}} \rho_{n-1,k}, \end{equation}
 and for all $\lambda \vdash k(n-1) +1$ with $k$ columns,
\begin{equation} \label{specht_eqn} \hat S^{\lambda}   \cong_{\sg_{k(n-2)+1}}  S^{\lambda^{-}} ,\end{equation}
where if $\lambda$ is the partition $ (\lambda_1 \ge \lambda_2 \ge \dots \ge  \lambda_\ell)$ then $ \lambda^{-}$ is the partition $(\lambda_2  \ge \dots \ge  \lambda_\ell)$.
\end{lemma}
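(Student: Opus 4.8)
The plan is to prove both isomorphisms in Lemma~\ref{hatlem2} by exhibiting explicit $\sg_{k(n-2)+1}$-equivariant maps and using the ``collapsing'' construction that turns a Filippov $n$-algebra into a Filippov $(n-1)$-algebra by absorbing one fixed argument $b$ into each bracket. For \eqref{rho_eqn}, recall from \cite[Proposition~43]{DI} that if $\L$ is a free Filippov $n$-algebra and $b$ is a generator, then the $(n-1)$-ary operation $[y_1,\dots,y_{n-1}]' := [y_1,\dots,y_{n-1},b]$ makes the $b$-free part into (a quotient of) a free Filippov $(n-1)$-algebra. First I would set up the linear map $\hat\rho_{n,k}\to\rho_{n-1,k}$ sending a bracketed word of type $1^{k(n-2)+1}k$ on $[k(n-2)+2]$ to the $(n-1)$-bracketed word on $[k(n-2)+1]$ obtained by reading each $n$-bracket $[\cdots,b]$ as an $(n-1)$-bracket $[\cdots]'$; one has to check this is well defined, i.e.\ that the antisymmetry relations \eqref{antsym} and the generalized Jacobi relations \eqref{type1} for the $n$-algebra map to the corresponding relations for the $(n-1)$-algebra (here the alternative presentation \eqref{filipeq} is the convenient one, since the $n$-ary Jacobi identity \eqref{filipeq} with $y_{n-1}=b$ specialises exactly to the $(n-1)$-ary Jacobi identity \eqref{filipeq} for $[\,\cdot\,]'$). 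Conversely I would build the inverse map by sending an $(n-1)$-bracketed word to the $n$-bracketed word with $b$ appended inside each bracket; equivariance under $\sg_{k(n-2)+1}$ is immediate since $b$ is fixed throughout. Counting brackets shows the word on $[k(n-2)+1]$ has $k$ brackets, consistent with $k(n-2)+1 = k((n-1)-1)+1$, so the target is indeed the multilinear component defining $\rho_{n-1,k}$.

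For \eqref{specht_eqn}, the analogous statement at the level of Specht modules, I would proceed combinatorially. A tableau in $\hat{\mathcal T}_\lambda$ of shape $\lambda$ (with $k$ columns) and type $1^{k(n-2)+1}k$ has the letter $b$ appearing $k$ times; since each column of $\lambda$ has strictly increasing entries modulo column relations (and a column tabloid with two equal entries in a column is zero), in any nonzero column tabloid the $k$ copies of $b$ must lie in $k$ distinct columns, hence one per column. Using the sign relations one can move each $b$ to the bottom of its column, so a basis of $\hat S^\lambda$ is given by column tabloids whose bottom cell of every column contains $b$; deleting that bottom-$b$ row identifies such tabloids with column tabloids of shape $\lambda^-$ filled by $1,\dots,k(n-2)+1$. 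The key point is that this identification intertwines the Garnir relations $g^t_c$ on $\hat S^\lambda$ with the Garnir relations on $S^{\lambda^-}$: when one exchanges an entry of column $c$ with the top entry of column $c+1$, the bottom $b$'s are never involved (the top of a column is not its bottom once $\lambda$ has at least two rows, and the degenerate cases are easy), so the relation descends verbatim after deleting the $b$-row. Then by Proposition~\ref{presentprop} (Fulton's presentation), the quotient on the $\lambda^-$ side is exactly $S^{\lambda^-}$, and the map is plainly $\sg_{k(n-2)+1}$-equivariant since it fixes $b$.

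The main obstacle I anticipate is verifying well-definedness of the ``absorb $b$'' map on $\rho$ at the level of \emph{relations}: one must be careful that the generalized Jacobi relation \eqref{type1} (or \eqref{filipeq}) for the $n$-algebra, when one or more of its arguments is $b$ and $b$ occurs with multiplicity, still maps to a valid consequence of the $(n-1)$-ary Jacobi relation — in particular that no spurious relations are introduced and none are lost (which is what would make the map an isomorphism rather than just a surjection or a map with large kernel). Here I would lean on the fact established in \cite[Appendix~1]{DI} and \cite[Proposition~43]{DI} that \eqref{filipeq} is the ``right'' form of the identity for this collapsing operation, and check degree/bracket-count bookkeeping to confirm the multilinear components match up dimensionally. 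A secondary, more routine obstacle in \eqref{specht_eqn} is handling the edge cases where $\lambda$ has a column of length one (so the bottom cell of that column is also its top cell): there the $b$ sits in a one-cell column and the Garnir relation involving that column needs separate, but straightforward, inspection.
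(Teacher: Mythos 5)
Your proposal matches the paper's proof essentially step for step: both parts are proved by the same ``absorb/strip the repeated letter $b$'' device, with $b$ placed at the end of each bracket (resp.\ bottom of each column), and well-definedness checked by verifying that the antisymmetry and generalized Jacobi (resp.\ column and Garnir) relations correspond under the map; the inverse map is the reinsertion of $b$, and $\sg_{k(n-2)+1}$-equivariance follows because $b$ is fixed. The only cosmetic differences are that you prefer to phrase the Jacobi check via \eqref{filipeq} while the paper works from \eqref{type1} directly, and you correctly flag the length-one-column degeneracy, which the paper resolves the same way (the corresponding Garnir relation is vacuous once two $b$'s land in one column).
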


\begin{proof}[Proof of (\ref{rho_eqn})]    The  $n$-ary bracketed words of type $1^{k(n-2) +1} k$ have $k$ occurences of the letter $b := k(n-1)+2$.  Since there are $k$ brackets, by antisymmetry of the brackets, each bracket of a nonvanishing $n$-ary bracketed word contains exactly one $b$ and the $b$ can be placed at the right end of each bracket.  For example, 
$$[ [ [3,1,b],[5,2,b],b ] ,[6,4,b],b]$$
 is such a $3$-bracketed word of type $1^{5(1)+1}5$.  These bracketed words, which we refer to as {\it $b$-bracketings}, generate $\hat \rho_{n,k}$.  
The antisymmetry relations reduce to   
$$[x_1,\dots,x_{n-1},b]  = \sgn(\sigma) [x_{\sigma(1)},\dots,x_{\sigma(n-1)},b],$$ where $\sigma \in \sg_{n-1}$,
for the $b$-bracketings, and the generalized Jacobi relations  reduce to
\begin{align*}
 &[[x_1, x_2, \ldots,  x_{n-1},b], x_{n+1},\ldots , x_{2n-2},b]
\\ \non&=\sum_{i=1}^{n-1} [x_1, x_{2}, \ldots , x_{i-1},[ x_{i}, x_{n+1}, \ldots , x_{2n-2},b], x_{i+1}, \ldots , x_{n-1},b]
\end{align*}
for the $b$-bracketings, since the $i=n$ term of the summation in (\ref{type1}) is 
$$[x_1,\dots,x_{n-1},[b,x_{n+1},\dots,x_{2n-2},b]],$$ which by antisymmetry is $0$.  These relations imply all relations for $\hat \rho_{n,k}$.  Indeed, one can check that any other relation on these generators that comes from  (\ref{type1}) is trivial.  For instance,  by antisymmetry of the bracket,
(\ref{type1}) with $x_{n+1} =x_{2n-1}= b$ yields the relation $0=0$ and (\ref{type1}) with $x_1 =x_n= b$ yields the relation
$0 = [[x_{n+1},\dots, x_{2n-1}, b], x_2,\dots, x_{n-1},b] -  [[x_{n+1},\dots, x_{2n-1}, b], x_2,\dots, x_{n-1},b]$.

Now let $\varphi: \hat \rho_{n,k} \to \rho_{n-1,k}$ be the linear map defined  by letting $\varphi(w)$ be the $(n-1)$-ary bracketed word obtained from the $b$-bracketing $w$ by removing all the  $b$'s.  For example,
$$\varphi([ [ [3,1,b],[5,2,b],b ] ,[6,4,b],b]) =  [ [ [3,1],[5,2] ] ,[6,4]].$$ We can see that  $\varphi$ respects the antisymmetry relations 
and the generalized Jacobi relations and is therefore a well defined linear map.  Indeed, $\varphi$ takes
$$[x_1,\dots,x_{n-1},b]  - \sgn(\sigma) [x_{\sigma(1)},\dots,x_{\sigma(n-1)},b]$$ to $$ [x_1,\dots,x_{n-1}]  - \sgn(\sigma) [x_{\sigma(1)},\dots,x_{\sigma(n-1)}] $$
for all $\sigma \in \sg_{n-1}$ and $\varphi$ takes 

\begin{align*}
 &[[x_1, x_2, \ldots,  x_{n-1},b], x_{n+1},\ldots , x_{2n-2},b]
\\ \non&-\sum_{i=1}^{n-1} [x_1, x_{2}, \ldots , x_{i-1},[ x_{i}, x_{n+1}, \ldots , x_{2n-2},b], x_{i+1}, \ldots , x_{n-1},b]
\end{align*}

to

\begin{align*}
 &[[x_1, x_2, \ldots,  x_{n-1}], x_{n+1},\ldots , x_{2n-2}]
\\ \non&-\sum_{i=1}^{n-1} [x_1, x_{2}, \ldots , x_{i-1},[ x_{i}, x_{n+1}, \ldots , x_{2n-2}], x_{i+1}, \ldots , x_{n-1}].
\end{align*}
Note that $\varphi$ can easily be inverted.  Hence $\varphi$ is a vector space isomorphism.  It is clear that $\varphi$ respects the action of $\sg_{k(n-2)+1}$; so it is an $\sg_{k(n-2)+1}$-module isomorphism.
\end{proof}

\begin{proof}[Proof of (\ref{specht_eqn})]  The proof is similar to the proof of (\ref{rho_eqn}).  The column tabloids $\bar t$ of shape $\lambda$ and type $1^{k(n-2)+1}k$ have $k$ occurrences of $b := k(n-2)+2$.  By antisymmetry of the columns, since $\lambda$ has $k$ columns, each column of $t$ must have exactly one $b$ and the $b$ can be placed at the bottom of the column.   We call such a $t$ a $b$-tableau.  Let $\mathcal T^b_{\lambda} $ be the subset of $\hat {\mathcal T}_\lambda$ consisting of $b$-tableaux of shape $\lambda$. The column relations on $b$-tableaux are of the form $t+s$, where $s$ is obtained from $t$ by exchanging two non-$b$ entries in the same column. The column tabloids $\bar t$, where $t \in \mathcal T^b_{\lambda}$,  generate   $\hat S^{\lambda}$.  The  Garnir relations for these generators reduce to $g^t_c = \bar t- \sum_{s} \bar s$, where $t \in \mathcal T^b_{\lambda}$ and   the sum is over all $s$ obtained from $t$ by exchanging any  element of column $c$ other than the bottom element $b$  with the top element of the next column (provided the next column has length greater than 1).

Given   a $b$-tableau $t$ of shape $\lambda$, let $t^-$ be the tableau obtained from $t$ by removing the $b$ from the bottom 
of each column. Clearly $t^-$ is a  tableau of shape $\lambda^{-}$. 
Now let $\psi: \hat S^{\lambda} \to 
S^{\lambda^{-}}$ be the linear map defined on the generating set $\{\bar t\mid t \in \mathcal T^b_\lambda\}$ by letting
 $\psi(\bar t) = \overline{t^-}$.  To see that this is a well-defined $\sg_{k(n-2)+1}$-module homomorphism, one  can easily verify that the column relations map to column relations,  and the Garnir relation $g^t_c $ maps to  the Garnir relation $g^{t^-}_c$,  for all $t \in  \mathcal T^b_\lambda$.  The inverse map  is easily obtained by adding a $b$ to the bottom of each column of a tableau of shape $\lambda^{-}$.   Again since relations map to relations under this map, the inverse is well-defined.  Hence $\psi $ is an isomorphism.  Alternatively, one can see that $\psi$ is an isomorphism, by noting that $\psi$ maps 
basis elements of   $\hat S^{\lambda}$  to basis elements of $S^{\lambda^{-}}$.  Indeed,  the column tabloids $\bar t$, where $t$ is a semistandard $b$-tableau of shape $\lambda$, 
 form a basis for $\hat S^{\lambda}$ and   the column tabloids $\bar t$, where $t$ is a standard tableau of shape $\lambda^-$ form a basis for $S^{\lambda^-}$\end{proof}
 
We need another lemma, which combined with the previous lemma yields  Theorem~\ref{decomposition_thm}.

\begin{lemma}  \label{hatlem1} As $\sg_{k(n-2)+1}$-modules
$$\hat \rho_{n,k} \cong \bigoplus_{\substack{ \lambda \vdash k(n-1) +1\\  \lambda \, \mbox{\scriptsize has $k$ columns }  } } c_\lambda \hat S^{\lambda},$$
where $c_\lambda$ is the multiplicity of the irreducible $\sg_{k(n-1) +1}$-module $S^{\lambda}$ in $\rho_{n,k}$.
\end{lemma}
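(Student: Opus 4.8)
The plan is to realize $\hat\rho_{n,k}$ as a quotient of an induction product of sign representations, in exactly the same way that $\rho_{n,k}$ is a quotient of $V_{n,k}$, but now keeping track of the letter $b$ separately. First I would record the analogue of Proposition~\ref{coneprop} in the present ``$b$-decorated'' setting: every $b$-bracketing is, modulo antisymmetry and the reduced generalized Jacobi relations derived in the proof of~(\ref{rho_eqn}), a linear combination of $b$-combs, i.e.\ combs of the form~(\ref{combeq}) in which each bracket carries exactly one $b$ at its right end. This follows by running the degree-induction argument of Proposition~\ref{coneprop} verbatim, noting that the reduced Jacobi relation~(\ref{filipeq})-style identity for $b$-bracketings never creates or destroys $b$'s. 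Consequently $\hat\rho_{n,k}$ is a quotient of the $\sg_{k(n-2)+1}$-module $\hat V_{n,k}$ spanned by $b$-combs subject only to the column/antisymmetry relations.

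The key structural observation is then that $\hat V_{n,k}$ decomposes along shapes: stripping the mandatory $b$ from the bottom of each of the $k$ ``teeth'' of a $b$-comb and recording which multiset of non-$b$ letters sits in each tooth, one sees that $\hat V_{n,k}$ is spanned by $\{\,\overline t : t\in \mathcal T^b_\mu \text{ for some } \mu\vdash k(n-1)+1 \text{ with } k \text{ columns}\,\}$, and that the defining relations of $\hat\rho_{n,k}$ among these spanning vectors are precisely the Garnir relations $g^t_c$ (the reduced Jacobi relations, after removing $b$'s, become the Garnir relations of the $(n-1)$-ary comb module, which under the bijection $t\mapsto t^-$ of Lemma~\ref{hatlem2} are exactly the Garnir relations on column tabloids). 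This is the conceptual heart of the argument and the step I expect to be the main obstacle: matching the Jacobi-type relations on $b$-combs with the Garnir relations on tabloids requires carefully tracking how a single application of the reduced Jacobi identity redistributes letters among the teeth and checking it coincides, term by term, with a Garnir straightening move. I would handle this by transporting everything through the isomorphism $\varphi$ of~(\ref{rho_eqn}): since $\varphi$ is an $\sg_{k(n-2)+1}$-isomorphism $\hat\rho_{n,k}\xrightarrow{\sim}\rho_{n-1,k}$, and since by Proposition~\ref{lekprop} every irreducible of $\rho_{n-1,k}$ has at most $k-1$ columns, we get
\[
\hat\rho_{n,k}\;\cong_{\sg_{k(n-2)+1}}\;\rho_{n-1,k}\;\cong\;\bigoplus_{\mu\vdash k(n-2)+1,\ \mu\text{ has}\le k-1\text{ cols}} m_\mu\, S^{\mu},
\]
where $m_\mu$ is the multiplicity of $S^\mu$ in $\rho_{n-1,k}$.

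It then remains to reconcile this with the claimed indexing. By~(\ref{specht_eqn}) of Lemma~\ref{hatlem2}, $\hat S^\lambda\cong_{\sg_{k(n-2)+1}} S^{\lambda^-}$ for each $\lambda\vdash k(n-1)+1$ with $k$ columns, and the map $\lambda\mapsto\lambda^-$ is a bijection from $\{\lambda\vdash k(n-1)+1 : \lambda \text{ has exactly } k \text{ columns}\}$ onto $\{\mu\vdash k(n-2)+1 : \mu \text{ has at most } k-1 \text{ columns}\}$ (adding a first column of length $k$ to $\mu$ is the inverse). So the displayed decomposition of $\rho_{n-1,k}$ can be rewritten as $\bigoplus_\lambda m_{\lambda^-}\,\hat S^\lambda$, the sum over $\lambda\vdash k(n-1)+1$ with $k$ columns. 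Finally I must identify $m_{\lambda^-}$ with $c_\lambda$, the multiplicity of $S^\lambda$ in $\rho_{n,k}$: this is exactly the recursion content behind Lemma~\ref{hatlem2}'s being useful — and indeed it is what Theorem~\ref{decomposition_thm} asserts, since $c_\lambda$ for $k$-column $\lambda$ is the multiplicity of the corresponding irreducible in the ``$\beta$-part'' of $\rho_{n,k}$, which by definition of $\beta_{n,k}$ is obtained by prepending a row of length $k$ (equivalently a column of length $k$ after conjugating appropriately) to the diagrams appearing in $\rho_{n-1,k}$. I would therefore close the proof by invoking this bookkeeping: $c_\lambda=m_{\lambda^-}$ by the very definition of the multiplicities being compared, completing the chain of isomorphisms. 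The one genuinely nontrivial input, as noted, is the identification of relations in the middle paragraph; everything else is bijective bookkeeping on Young diagrams and transport of structure along $\varphi$ and $\psi$.
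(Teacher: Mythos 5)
Your argument is circular. In the final step you write ``$c_\lambda = m_{\lambda^-}$ by the very definition of the multiplicities being compared,'' but these two numbers are defined independently: $c_\lambda$ is the multiplicity of $S^\lambda$ in $\rho_{n,k}$, while $m_{\lambda^-}$ is the multiplicity of $S^{\lambda^-}$ in $\rho_{n-1,k}$. The equality of these for every $k$-column $\lambda$ is precisely the nontrivial content of Theorem~\ref{decomposition_thm}, which you explicitly invoke to close the gap. But Theorem~\ref{decomposition_thm} is itself proved \emph{from} Lemmas~\ref{hatlem1} and~\ref{hatlem2}; you cannot use it to prove Lemma~\ref{hatlem1}. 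Put differently, (\ref{rho_eqn}) and (\ref{specht_eqn}) do give you $\hat\rho_{n,k}\cong\rho_{n-1,k}\cong\bigoplus_\lambda m_{\lambda^-}\hat S^\lambda$, but a priori this has the \emph{wrong} coefficients; the entire point of Lemma~\ref{hatlem1} is to show that these coefficients are the multiplicities sitting inside $\rho_{n,k}$, and that step still needs an argument independent of the theorem it is meant to prove.

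The paper's proof goes in the other direction and never touches $\rho_{n-1,k}$. Set $B=\{k(n-2)+2,\dots,k(n-1)+1\}$ and $\alpha=\sum_{\sigma\in\sg_B}\sigma$. Restricting $\rho_{n,k}$ to $\sg_{k(n-2)+1}\times\sg_B$ and then applying $\alpha$ --- symmetrizing the last $k$ letters into the single repeated letter $b$ --- produces $\hat\rho_{n,k}\otimes 1_{\sg_B}$, and the same operation applied to an irreducible $S^\lambda$ produces $\hat S^\lambda\otimes 1_{\sg_B}$. Since ``restrict then apply $\alpha$'' is additive, applying it to $\rho_{n,k}\cong\bigoplus_\lambda c_\lambda S^\lambda$ yields $\hat\rho_{n,k}\cong\bigoplus_\lambda c_\lambda\hat S^\lambda$ with the \emph{same} coefficients $c_\lambda$. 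One then observes that the terms with $\lambda$ having fewer than $k$ columns vanish (a tabloid of $\hat S^\lambda$ with two copies of $b$ in a column is zero by column antisymmetry) and terms with more than $k$ columns are already absent by Proposition~\ref{lekprop}. This symmetrization is the mechanism that transfers the multiplicities from $\rho_{n,k}$ to $\hat\rho_{n,k}$ without any appeal to $\rho_{n-1,k}$, and it is the ingredient your write-up is missing.
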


\begin{proof}
Let $B:=\{k(n-2) + 2, k(n-2) + 3,\dots, k(n-1) + 1\}$ and let $\alpha := \sum_{\sigma \in \sg_B} \sigma$.   One can  see that 
$$\alpha ( \rho_{n,k} \downarrow^{\sg_{k(n-1)+1}}_{\sg_{k(n-2)+1}\times \sg_B} )\cong \hat \rho_{n,k} \otimes 1_{\sg_B}$$
and 
$$\alpha  (S^\lambda \downarrow^{\sg_{k(n-1)+1}}_{\sg_{k(n-2)+1} \times \sg_B }) \cong \hat S^\lambda \otimes 1_{\sg_B},
$$
where $1_{\sg_B}$ denotes the trivial representation of $\sg_B$.
Now take the restriction of  both sides of the decomposition 
$$\rho_{n,k} \cong \bigoplus_{\lambda \vdash k(n-1)+1} c_\lambda S^\lambda,$$
from $\sg_{k(n-1)+1}$ to  $\sg_{k(n-2)+1} \times \sg_B$ 
and then take the image under the action of $\alpha$ to get the $\sg_{k(n-2)+1}$-module isomorphism $$\hat \rho_{n,k} \cong \bigoplus_{\lambda \vdash k(n-1)+1} c_\lambda \hat S^\lambda.$$
By Proposition~\ref{lekprop}, $c_\lambda =0$ if $\lambda$ has more than $ k$ columns.  On the other hand $\hat S^\lambda = 0$ if $\lambda$ has fewer than  $k$ columns.  Indeed, if $t\in \hat {\mathcal T}_{\lambda} $ then    entry $b:=k(n-2)+2$ is repeated $k$ times. If $\lambda$ has fewer than  $k $ columns then entry $b$ will appear more than once  in  some column of $t$.  By  antisymmetry of the columns,  the column tabloid $\bar t$ is  equal to $0$. Thus we can limit the summation to $\lambda$ with  exactly $k$ columns.
 \end{proof}

We can now easily prove Theorem~\ref{decomposition_thm}, which we restate here.  First recall that for  $n \ge 2$ and $k \ge 1$, we defined  $\beta_{n,k}$ to be the $\sg_{k(n-1)+1}$-module whose decomposition into irreducibles is obtained from the $\sg_{k(n-2)+1}$-module $\rho_{n-1,k}$ by adding a row of length $k$ to the top of each Young diagram in the decomposition of $\rho_{n-1,k}$.  Also recall that for  $k \ge 1$, we  defined $\rho_{1,k}$ to be $S^{1}$.  For example, 
 \begin{align*} \rho_{1,2} &= S^1 \mbox{ and } \beta_{2,2} = S^{21} \\
 \rho_{2,3} &= S^{31} \oplus S^{21^2} \mbox{ and } \beta_{3,3} = S^{3^21} \oplus S^{321^2}.
\end{align*}

\begin{theorem}[Theorem~\ref{decomposition_thm}]  \label{decomposition_thm_2} Let $n,k \ge 1$.  Then as $\sg_{k(n-1)+1}$-modules,
\begin{equation}  \rho_{n,k} \cong  \beta_{n,k} \oplus \gamma_{n,k} , \end{equation}
for some  $\sg_{k(n-1)+1}$-module $\gamma_{n,k}$ all of whose irreducibles  have Young diagrams with at most $k-1$ columns.  
\end{theorem}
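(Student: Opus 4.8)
The plan is to read the statement off directly from Lemmas~\ref{hatlem1} and~\ref{hatlem2}, which together already pin down the ``$k$-column part'' of $\rho_{n,k}$ in terms of $\rho_{n-1,k}$. The degenerate case $n=1$ is immediate: there $\rho_{1,k}=S^{1}=\beta_{1,k}$ by definition, so one takes $\gamma_{1,k}=0$. Assume $n\ge 2$ from now on.

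Write $c_\lambda$ for the multiplicity of $S^\lambda$ in $\rho_{n,k}$. Chaining Lemma~\ref{hatlem1} with the isomorphisms~(\ref{rho_eqn}) and~(\ref{specht_eqn}) of Lemma~\ref{hatlem2} yields the $\sg_{k(n-2)+1}$-module isomorphism
\[
\rho_{n-1,k}\ \cong\ \hat\rho_{n,k}\ \cong\ \bigoplus_{\substack{\lambda\,\vdash\, k(n-1)+1\\ \lambda\text{ has }k\text{ columns}}} c_\lambda\,\hat S^{\lambda}\ \cong\ \bigoplus_{\substack{\lambda\,\vdash\, k(n-1)+1\\ \lambda\text{ has }k\text{ columns}}} c_\lambda\,S^{\lambda^{-}}.
\]
Next I would reparametrize the index set: a partition $\lambda\vdash k(n-1)+1$ has exactly $k$ columns iff $\lambda_1=k$, and then $\lambda\mapsto\lambda^{-}$ is a bijection onto the partitions $\mu\vdash k(n-2)+1$ with at most $k$ columns, with inverse $\mu\mapsto(k,\mu)$ (here $(k,\mu)$ is a genuine partition precisely because $\mu_1\le k$). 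So the display says that for every such $\mu$ the multiplicity of $S^{\mu}$ in $\rho_{n-1,k}$ equals $c_{(k,\mu)}$, and by Proposition~\ref{lekprop} every irreducible of $\rho_{n-1,k}$ has at most $k$ columns, so this identity accounts for the entire decomposition of $\rho_{n-1,k}$.

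Finally, $\beta_{n,k}$ is by definition obtained from $\rho_{n-1,k}$ by prepending a row of length $k$ to each Young diagram, i.e.\ $\beta_{n,k}\cong\bigoplus_{\mu}(\text{mult.\ of }S^{\mu}\text{ in }\rho_{n-1,k})\,S^{(k,\mu)}$, which by the previous paragraph equals $\bigoplus_{\lambda\text{ has }k\text{ columns}}c_\lambda S^{\lambda}$. Since Proposition~\ref{lekprop} also gives $c_\lambda=0$ whenever $\lambda$ has more than $k$ columns, the full decomposition $\rho_{n,k}\cong\bigoplus_\lambda c_\lambda S^\lambda$ splits as $\rho_{n,k}\cong\beta_{n,k}\oplus\gamma_{n,k}$ with $\gamma_{n,k}:=\bigoplus_{\lambda\text{ has at most }k-1\text{ columns}}c_\lambda S^{\lambda}$, which has the stated form. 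I do not expect a genuine obstacle here: all of the substance lives in Lemmas~\ref{hatlem1} and~\ref{hatlem2}, and what remains is the bookkeeping above. The only points needing a little care are the verification that $\lambda\mapsto\lambda^{-}$ is the claimed bijection (using Proposition~\ref{lekprop} to control column counts on both sides) and checking that the degenerate cases $k=1$ and $n=1$, where $\gamma_{n,k}=0$, are consistent with the phrase ``at most $k-1$ columns''.
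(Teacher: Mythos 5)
Your proposal is correct and follows essentially the same route as the paper: chain Lemma~\ref{hatlem1} with the two isomorphisms of Lemma~\ref{hatlem2} to identify $\rho_{n-1,k}$ with $\bigoplus c_\lambda S^{\lambda^-}$, reparametrize via $\lambda \mapsto \lambda^-$, and close with Proposition~\ref{lekprop}. Your version simply spells out the bookkeeping (the bijection between $k$-column partitions of $k(n-1)+1$ and at-most-$k$-column partitions of $k(n-2)+1$, and the degenerate cases $n=1$, $k=1$) that the paper leaves implicit.
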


\begin{proof} Lemmas~\ref{hatlem1} and~\ref{hatlem2} together imply that as  $\sg_{k(n-2) +1}$-modules,
$$\rho_{n-1,k} \cong \bigoplus_{\substack{ \lambda \vdash k(n-1) +1\\   \lambda \, \mbox{\scriptsize has $k$ columns } } }c_\lambda S^{\lambda^-},$$  
where $c_\lambda$ is the multiplicity of $S^{\lambda}$ in $\rho_{n,k}$.
If we add a part of size $k$ to $\lambda^-$, we get $\lambda$.  Thus
$$ \beta_{n,k} \cong \bigoplus_{\substack{ \lambda \vdash k(n-1) +1\\  \lambda \, \mbox{\scriptsize has $k$ columns } }} c_\lambda S^{\lambda}.$$
The result now follows from
Proposition~\ref{lekprop}.
\end{proof}

\subsection{Another decomposition result}

We say that a bracketed permutation $[w_1,\dots,w_n]$ is a {\em non-comb} if either
\begin{itemize}
\item for some $i\ne j\in [n]$, the bracketed permutations $w_i$ and $w_j$ have more than one letter or
\item for some $i \in [n]$, the bracketed permutation $w_i$ is a non-comb.
\end{itemize}
These are the bracketed permutations that can't be obtained from an $n$-comb (\ref{combeq}) by just using the antisymmetry relations and not the generalized Jacobi relations.
For $ n,k \ge 1$, let $\tilde\rho_{n,k}$  be the submodule of $\rho_{n,k}$ spanned by the non-combs  in $\rho_{n,k}$.

The next result  shows that in order to obtain the decomposition of $\rho_{n,k}$ into irreducibles, we need only focus on the submodule spanned by non-combs.  This  will play a crucial role in the proof of Theorem~\ref{k=3} given in Section~\ref{ncsec}.  
\begin{theorem} \label{noncombth} For all $k,n \ge 1$, let $\tilde\rho_{n,k}$  be the submodule of $\rho_{n,k}$ spanned by  the non-combs  in $\rho_{n,k}$. Then
\begin{enumerate}
\item the multiplicity of $S^{k^{n-1}1}$ in $\rho_{n,k}$ is 1 
\item $\rho_{n,k} \cong S^{k^{n-1}1} \oplus \tilde\rho_{n,k}$
\item $\tilde\rho_{n,k} = 0$ if and only if $k=1,2$.
\end{enumerate}
\end{theorem}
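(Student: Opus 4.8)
The plan is to prove the three statements in the order (1), (2), (3); the proof of (2) rests on (1) and on a multiplicity fact that I expect to be the main obstacle.

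\emph{Part (1).} I would argue by induction on $n$, using Lemmas~\ref{hatlem1} and~\ref{hatlem2} (equivalently, Theorem~\ref{decomposition_thm}). For $k=1$ the statement is immediate ($\rho_{n,1}=S^{1^n}$). For $k\ge 2$, the Young diagram $k^{n-1}1$ has exactly $k$ columns, so by Proposition~\ref{lekprop} and Theorem~\ref{decomposition_thm} the multiplicity of $S^{k^{n-1}1}$ in $\rho_{n,k}$ equals its multiplicity in $\beta_{n,k}$; since adjoining a row of length $k$ to a diagram of $\rho_{n-1,k}$ produces $k^{n-1}1$ exactly when that diagram is $k^{n-2}1$, this multiplicity equals the multiplicity of $S^{k^{n-2}1}$ in $\rho_{n-1,k}$. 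The base case $n=2$ is the assertion that $S^{k1}$ occurs in $\lie_{k+1}$ with multiplicity $1$, which is Theorem~\ref{liek}.

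\emph{Part (2).} The key step is to identify the quotient module $Q:=\rho_{n,k}/\tilde\rho_{n,k}$. Since the combs span $\rho_{n,k}$ (Proposition~\ref{coneprop}), $Q$ is spanned by the images of combs, and under the isomorphism $V_{n,k}\cong\sgn_n\bullet\sgn_{n-1}^{\bullet(k-1)}\cong M^{k^{n-1}1}$ the comb $[[\cdots[a_0,a_{1,1},\dots,a_{1,n-1}],a_{2,1},\dots,a_{2,n-1}],\dots]$ corresponds to the column tabloid with first column $\{a_0,a_{1,1},\dots,a_{1,n-1}\}$ (length $n$) and $c$th column $\{a_{c,1},\dots,a_{c,n-1}\}$ (length $n-1$) for $c\ge 2$. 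I would then apply the generalized Jacobi identity (\ref{filipeq}) to the sub-bracket consisting of the innermost $c+1$ brackets of an arbitrary comb, for each $c=1,\dots,k-1$ (with the inner bracket $[x_1,\dots,x_n]$ taken to be the sub-bracket of the innermost $c$ brackets and $y_1,\dots,y_{n-1}$ the $(c+1)$st block of leaves). When $c\ge 2$ exactly one term on the right-hand side is a non-comb; deleting the non-comb terms (which lie in $\tilde\rho_{n,k}$) and matching the surviving terms against column tabloids, one checks that the resulting relation in $Q$ is precisely the Garnir relation $g^t_c$. By Proposition~\ref{presentprop} these relations present $S^{k^{n-1}1}$, so $Q$ is a quotient of $S^{k^{n-1}1}$, hence $Q=0$ or $Q\cong S^{k^{n-1}1}$.

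It then suffices to show that $S^{k^{n-1}1}$ has multiplicity $0$ in $\tilde\rho_{n,k}$: by Part~(1), $\rho_{n,k}$ contains a unique copy of $S^{k^{n-1}1}$, so by semisimplicity $\rho_{n,k}=\tilde\rho_{n,k}\oplus U$ with $U$ containing that copy, and then $U\cong Q$, which is therefore nonzero and so must be $S^{k^{n-1}1}$, giving $\rho_{n,k}\cong S^{k^{n-1}1}\oplus\tilde\rho_{n,k}$. This multiplicity statement is the main obstacle. As $\tilde\rho_{n,k}$ is a quotient of the $\sg_{k(n-1)+1}$-module $N_{n,k}$ spanned by all non-comb bracketed permutations subject only to antisymmetry, it is enough to show that $S^{k^{n-1}1}$ does not occur in $N_{n,k}$, and I would prove this by induction on $n$. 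For $n=2$, write $N_{2,k}=\bigoplus_\tau\ind_{G_\tau}^{\sg_{k+1}}\epsilon_\tau$, the sum over isomorphism classes of non-comb binary trees $\tau$ with $k$ internal nodes, where $G_\tau$ is the tree-automorphism group acting on the leaves and $\epsilon_\tau$ the character recording the parity of the induced leaf permutation. Every non-comb binary tree has at least two cherries (internal nodes both of whose children are leaves), so for each leaf $\ell$ there is a cherry avoiding $\ell$ and hence $\epsilon_\tau$ restricts nontrivially to the stabilizer of $\ell$; since $\chi^{(k,1)}(g)=\mathrm{fix}(g)-1$, Frobenius reciprocity gives $\langle\epsilon_\tau,\chi^{(k,1)}|_{G_\tau}\rangle=\tfrac1{|G_\tau|}\sum_g\overline{\epsilon_\tau(g)}\bigl(\mathrm{fix}(g)-1\bigr)=0$. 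For the inductive step, deleting the $k$ copies of the repeated letter as in Lemma~\ref{hatlem2} carries non-combs to non-combs, so the span of non-combs in $\hat\rho_{n,k}$ is isomorphic to $\tilde\rho_{n-1,k}$; running the averaging argument of Lemma~\ref{hatlem1} with $\tilde\rho_{n,k}$ in place of $\rho_{n,k}$ then shows the multiplicity of $S^{k^{n-1}1}$ in $\tilde\rho_{n,k}$ equals that of $S^{k^{n-2}1}$ in $\tilde\rho_{n-1,k}$, which is $0$ by induction.

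\emph{Part (3).} If $k=1$ the only bracket is a comb, and if $k=2$ a count of brackets shows every bracketed permutation on $2n-1$ letters is a comb, so $\tilde\rho_{n,k}=0$ in both cases. If $k\ge 3$, by Part~(2) it remains to prove $\rho_{n,k}\not\cong S^{k^{n-1}1}$, which I would do by induction on $n$: for $n=2$, $\dim\rho_{2,k}=\dim\lie_{k+1}=k!>k=\dim S^{k1}$; for $n\ge 3$, by induction $\rho_{n-1,k}$ contains an irreducible other than $S^{k^{n-2}1}$, so by Theorem~\ref{decomposition_thm} the module $\beta_{n,k}$, and hence $\rho_{n,k}$, contains an irreducible other than $S^{k^{n-1}1}$.
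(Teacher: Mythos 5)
Your proof is correct, and the broad skeleton agrees with the paper's: both identify $Q:=\rho_{n,k}/\tilde\rho_{n,k}$ as a quotient of $S^{k^{n-1}1}$ by tracing the Garnir relations to instances of the alternative Jacobi identity (\ref{filipeq}) with the non-comb term discarded, and both then reduce to showing this quotient is nonzero. The genuine divergence is how you rule out $Q=0$. The paper argues by contradiction: if $Q=0$ then $\rho_{n,k}=\tilde\rho_{n,k}$, the $b$-removal isomorphism of Lemma~\ref{hatlem2} propagates this down to $\rho_{2,k}=\tilde\rho_{2,k}$, and that is refuted by a short Pieri's-rule observation (a non-comb $w\in V_{2,k}$ has some bracket with no single-letter children, so the cyclic module $\rho_w$ embeds in $\sgn_1^{\bullet j_1}\bullet\sgn_2^{\bullet j_2}$ with $j_1+j_2\le k-1$, whence $\tilde\rho_{2,k}$ has no irreducible with $k$ columns). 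You instead compute directly, by induction on $n$, that $S^{k^{n-1}1}$ has multiplicity $0$ in $\tilde\rho_{n,k}$: your inductive step reruns the averaging argument of Lemma~\ref{hatlem1} on the non-comb submodule (legitimate, since $b$-removal carries non-combs to non-combs), and your base case $n=2$ is a Frobenius-reciprocity computation on the tree-automorphism decomposition of the non-comb part of $V_{2,k}$. Both are valid; yours is a more explicit multiplicity computation, the paper's a leaner reductio with a lighter $n=2$ step. One small slip worth fixing: in $N_{2,k}\cong\bigoplus_\tau\ind_{G_\tau}^{\sg_{k+1}}\epsilon_\tau$, the character $\epsilon_\tau$ must be the sign the bracketing picks up under the tree automorphism (i.e.\ $(-1)^{\text{number of internal nodes whose children get swapped}}$), not the parity of the induced leaf permutation — these already disagree for the subtree swap of $[[a,b],[c,d]]$, whose leaf permutation $(a\,c)(b\,d)$ is even while the bracket changes sign. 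Your argument survives because you only evaluate $\epsilon_\tau$ on cherry swaps, where the two characters agree, but the definition should be corrected. Parts (1) and (3) are fine; you anchor the induction at $n=2$ via Kraskiewicz--Weyman and a dimension count, whereas the paper anchors Part (1) trivially at $n=1$ and Part (3) via Kraskiewicz--Weyman.
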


\begin{proof} (1) follows immediately from Theorem~\ref{decomposition_thm_2} and  induction on $n$.  It is clearly true when $n=1$. 

We prove (2)
by constructing an $\sg_{k(n-1)+1} $-module isomorphism $$\psi:  S^{k^{n-1}1}  \to  \rho_{n,k} /\tilde \rho_{n,k}.$$
Let $t$ be the tableau of shape $k^{n-1}1$ whose first column from top down is 
$a_0,a_{1,1},\dots, a_{1,n-1}$ and whose  $i$th column, where $2 \le  i \le k$,   from top down is 
$a_{i,1}, a_{i,2}, \dots, a_{i,n-1}$.  We define $\psi$ on generators of $S^{k^{n-1}1} $ by letting
 $$\psi(\bar t)= [[\cdots [a_0,a_{1,1},\dots, a_{1,n-1}],a_{2,1},\dots, a_{2,n-1}] ,\dots ], a_{k,1},\dots, a_{k,n-1}].$$
We claim that this map  induces a well defined surjective $\sg_{k(n-1)+1} $-module homomorphism,
$\psi:  S^{k^{n-1}1}  \to  \rho_{n,k} /\tilde \rho_{n,k}$.  
To see this, first note that the column relations map to antisymmetry relations on brackets.  The Garnir relations $g_c^t$ map to  relations on combs that are the same as the relations  obtained by setting the non-combs equal to $0$ in the alternative Jacobi relation (\ref{filipeq}). Indeed, let $x_1$ in (\ref{filipeq}) be a comb and let $x_2,\dots x_n,y_1,\dots, y_{n-1}$  be single letters, making the left hand side of (\ref{filipeq}) a comb.  Then the summands of the right hand side of (\ref{filipeq}) are all combs except for the $i=1$ term, which is set equal to $0$.  
 Hence $\psi$ is well defined. Since, by Proposition~\ref{coneprop}, $\rho_{n,k} /\tilde \rho_{n,k}$ is spanned  by combs,  $\psi$ is surjective.

We claim that $\ker \psi=0$.  Suppose not. Then since $S^{k^{n-1}1}$ is irreducible, $\ker \psi=S^{k^{n-1}1}$.  It follows that $\rho_{n,k} = \tilde \rho_{n,k}$, which  means that the non-combs span $\rho_{n,k} $.    This implies that the non-combs of type $1^{k(n-2)+1}k$ 
span $\hat \rho_{n,k}$. 
  Since the   $b$-removing $\sg_{k(n-2)+1}$-module isomorphism $\varphi:\hat \rho_{n,k} \to \rho_{n-1,k}$ in the proof of (\ref{rho_eqn}) takes non-combs of  type $1^{k(n-2)+1}k$ to non-combs of type $1^{k(n-2)+1}$, the non-combs span $\rho_{n-1,k}$.  Continuing this argument leads to the conclusion that the non-combs span $\rho_{2,k} = \lie_{k+1}$, 
  which means that $\tilde \rho_{2,k} = \rho_{2,k}$.  
    
However, it follows from Pieri's rule that $\tilde \rho_{2,k}$ and $ \rho_{2,k}$ cannot be equal. Indeed,
   in any bracketed permutation $w$ of $\rho_{2,k}$, there are three kinds of $2$-brackets $[a,b]$, namely, those in which
  \begin{enumerate}
  \item only one of $a$ and $b$ are single letters in $[k+1]$
  \item both $a$ and $b$ are  single letters in $[k+1]$
  \item neither $a$ nor $b$ are single letters in $[k+1]$.
  \end{enumerate}
 For each $i = 1,2,3$, let $j_i(w)$ be the number of brackets in $w$ of the $i$th kind. Then $\sum_{i=1}^3 j_i(w) = k$.  The cyclic submodule $\rho_w$ of $\rho_{2,k}$ spanned by $\{\sigma w : \sigma \in \sg_{k+1}\}$  is isomorphic to a submodule of the induction product $\sgn_{1}^{\bullet j_1(w)} \bullet \sgn_2^{\bullet j_2(w)}$.  Hence by Pieri's rule, all irreducibles in $\rho_w$ have at most $j_1(w)+j_2(w)$ columns.  The bracketed permutation $w$ is a non-comb if and only if $j_3(w) \ge 1$. Thus if $w$ is a non-comb,  $j_1(w) + j_2(w) \le k-1$. Therefore, since $\tilde \rho_{n,k}$ is the sum of the cyclic submodules $\rho_w$, where $w$ is a non-comb,  all irreducibles in $\tilde \rho_{n,k}$ have at most  $k-1$ columns.  It follows that the multiplicity of $S^{k^11}$ in  $\tilde \rho_{2,k}$ is $0$, while by Part (1) of this theorem, the multiplicity of $S^{k^11}$  in $\rho_{2,k}$ is $1$, which means that  $\tilde \rho_{2,k}$ and $ \rho_{2,k}$ cannot be equal.  Thus the claim that $\ker \psi=0$ holds, from which we  conclude that $\psi$ is indeed an isomorphism.

To prove (3) first note that when $k=1$ or $k=2$, there are no non-combs; so $\tilde \rho_{n,k} = 0$.  For the converse, let  $\tilde \rho_{n,k} = 0$. Then by part (2), $\rho_{n,k} = S^{k^{n-1}1}$.  By  Theorem~\ref{decomposition_thm_2}, this implies $\rho_{2,k}=\lie_{k+1} \cong S^{k^11}$.  We use the Kraskiewicz-Weyman decomposition, Theorem~\ref{liek}, to show that $k=1$ or $k=2$.  Indeed,    $\rho_{2,3} = S^{31} \oplus S^{21^2}$. Thus $k\ne 3$. If $k \ge 4$, consider the standard Young tableau of shape $(k-1)2$ whose first row is $1,2,4,5,\dots, k$ and whose second row is $3,k+1$.  Since its major index is $2+k$, which is congruent to $1 \bmod k+1$, we have that $S^{(k-1)^12}$ is in $\rho_{2,k}$. Thus 
$k <4$.
\end{proof}

Note that Part (2) of the  theorem generalizes Theorem~\ref{k=2} since there are no non-combs when $k=2$.

\section{The non-comb submodule for $k=3$} \label{ncsec}

 In this section, we prove Theorem~\ref{k=3} by studying  the  submodule $\tilde\rho_{n,3}$ of $\rho_{n,3}$ spanned by the non-combs and then applying Theorem~\ref{noncombth}. This result can also be derived as an immediate consequence of   results for general $k$, namely,  the Kraskiewicz-Weyman decomposition (Theorem~\ref{liek}) and the Stabilization Theorem (Theorem~\ref{introstabth}), which is proved in the next two sections. We include the proof for $k=3$ here  because the proof of the Stabilization Theorem is substantially more difficult and because the $k=3$ case serves as a warmup for what is to follow.  Also this proof  is what motivated our proof of the Stabilization Theorem.

\begin{theorem}  \label{keq3th} For all $n \ge 2$,
$$\tilde\rho_{n,3} \cong_{\sg_{3n-2}} S^{3^{n-2}21^2}.$$
\end{theorem}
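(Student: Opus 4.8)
The goal is to identify the non-comb submodule $\tilde\rho_{n,3}$ with $S^{3^{n-2}21^2}$. The plan is to induct on $n$, using the $b$-removing isomorphism machinery from Lemma~\ref{hatlem2} and Theorem~\ref{noncombth} to reduce from $\rho_{n,3}$ to $\rho_{n-1,3}$, combined with an explicit analysis of the cyclic submodules spanned by individual non-combs. First I would observe that by Proposition~\ref{lekprop}, every irreducible in $\rho_{n,3}$ has at most $3$ columns, and by the argument at the end of the proof of Theorem~\ref{noncombth}, every irreducible in $\tilde\rho_{n,3}$ has at most $2$ columns. Combined with Theorem~\ref{decomposition_thm_2} (which gives $\rho_{n,3}\cong\beta_{n,3}\oplus\gamma_{n,3}$ with $\gamma_{n,3}$ having at most $2$ columns) and Theorem~\ref{noncombth}(2) (which gives $\rho_{n,3}\cong S^{3^{n-1}1}\oplus\tilde\rho_{n,3}$), it follows that $\tilde\rho_{n,3}\cong (\beta_{n,3}\ominus S^{3^{n-1}1})\oplus\gamma_{n,3}$, and since $\beta_{n,3}$ is built by adding a row of length $3$ to the decomposition of $\rho_{n-1,3}$, induction on the $\rho_{n-1,3}$ decomposition pins down the $3$-column-or-fewer pieces. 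The base case $n=2$ is $\tilde\rho_{2,3}=\rho_{2,3}\ominus S^{31}$, and by the Kraskiewicz--Weyman theorem (Theorem~\ref{liek}), $\rho_{2,3}=\mathrm{lie}_4\cong S^{31}\oplus S^{21^2}$, so $\tilde\rho_{2,3}\cong S^{21^2}=S^{3^{0}21^2}$, as required.

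\textbf{The inductive step.} Assuming $\tilde\rho_{n-1,3}\cong S^{3^{n-3}21^2}$, by Theorem~\ref{noncombth}(2) we have $\rho_{n-1,3}\cong S^{3^{n-2}1}\oplus S^{3^{n-3}21^2}$. Adding a row of length $3$ to each of these two diagrams gives $\beta_{n,3}\cong S^{3^{n-1}1}\oplus S^{3^{n-2}21^2}$. By Theorem~\ref{decomposition_thm_2}, $\rho_{n,3}\cong\beta_{n,3}\oplus\gamma_{n,3}$ where $\gamma_{n,3}$ has only irreducibles with at most $2$ columns, and by Theorem~\ref{noncombth}(2), $\rho_{n,3}\cong S^{3^{n-1}1}\oplus\tilde\rho_{n,3}$. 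Cancelling $S^{3^{n-1}1}$ (using that multiplicities are well-defined, via Proposition~\ref{presentprop} and semisimplicity) yields $\tilde\rho_{n,3}\cong S^{3^{n-2}21^2}\oplus\gamma_{n,3}$. So the entire content of the theorem reduces to showing $\gamma_{n,3}=0$, equivalently that $\rho_{n,3}$ contains no irreducible with at most $2$ columns other than $S^{3^{n-2}21^2}$ --- or, more directly, that $\tilde\rho_{n,3}$ is itself irreducible.

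\textbf{Killing $\gamma_{n,3}$.} To show $\gamma_{n,3}=0$ I would argue at the level of the generating non-combs. A non-comb in $\rho_{n,3}$ (three brackets, $3n-2$ letters) must have $j_3\ge 1$ in the notation of the proof of Theorem~\ref{noncombth}; with only three brackets available, careful bookkeeping with antisymmetry and the alternative Jacobi relation \eqref{filipeq} shows there is essentially one ``shape'' of non-comb up to using Jacobi, namely a bracket of two nested $2$-level combs with the remaining slots filled by single letters, so the cyclic submodule $\rho_w$ for any non-comb $w$ embeds in $\sgn_n\bullet\sgn_n\bullet\sgn_{n-1}$ (or a further quotient thereof after applying the relations), whose irreducibles by Pieri all have at most $2$ columns, the ``full'' $2$-column one being $3^{n-2}21^2$ when the shapes are appropriately aligned. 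The cleanest route is to use the $b$-removing isomorphism $\varphi:\hat\rho_{n,3}\to\rho_{n-1,3}$ from the proof of \eqref{rho_eqn}, which carries non-combs of type $1^{3(n-2)+1}3$ to non-combs of type $1^{3(n-2)+1}$, and, together with Lemma~\ref{hatlem1} and the $b$-removing Specht isomorphism \eqref{specht_eqn}, forces any extra $2$-column irreducible $S^\mu$ in $\tilde\rho_{n,3}$ (necessarily with $3$ columns when a row of length $3$ is prepended, by Lemma~\ref{hatlem1}) to come from an extra irreducible $S^{\mu^-}$ in $\rho_{n-1,3}$ with at most $2$ columns --- but by the inductive hypothesis the only such is $S^{3^{n-3}21^2}$, whose prepend is $S^{3^{n-2}21^2}$. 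Hence $\gamma_{n,3}=0$ and $\tilde\rho_{n,3}\cong S^{3^{n-2}21^2}$. \textbf{The main obstacle} is the second part of this last paragraph: establishing that no irreducible with strictly fewer than $2$ columns (i.e. a single column $S^{1^{3n-2}}$) and no ``small'' $2$-column irreducible other than $3^{n-2}21^2$ sneaks into $\tilde\rho_{n,3}$ --- equivalently, carrying out the reduction cleanly so that the only surviving $2$-column shape is the maximal one. I expect this to require the explicit enumeration of non-comb types for $k=3$ and a direct verification that the image of the Garnir/Jacobi relations is large enough, which is exactly the kind of hands-on computation the excerpt flags as a ``warmup'' for the Stabilization Theorem.
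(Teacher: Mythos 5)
Your proposal has a genuine gap at precisely the point you flag as the ``main obstacle,'' and it is not a repairable gap within the framework you set up. After correctly reducing the problem (via Theorem~\ref{decomposition_thm_2}, Theorem~\ref{noncombth}(2), and the inductive hypothesis) to showing $\gamma_{n,3}=0$, you propose to detect $\gamma_{n,3}$ with the $b$-removing machinery of Lemma~\ref{hatlem2} and Lemma~\ref{hatlem1}. But that machinery is constructed to be blind to exactly the irreducibles you need to kill: the averaging by $\alpha=\sum_{\sigma\in\sg_B}\sigma$ in the proof of Lemma~\ref{hatlem1} annihilates $\hat S^\lambda$ whenever $\lambda$ has fewer than $k$ columns, because some column of a $b$-tableau must then contain a repeated $b$. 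Consequently $\hat\rho_{n,3}\cong\rho_{n-1,3}$ records only the multiplicities of irreducibles in $\rho_{n,3}$ with exactly $3$ columns, and is consistent with \emph{any} value of $\gamma_{n,3}$. Showing $\gamma_{n,3}=0$ (for $n\ge 3$) is exactly the $k=3$ case of Theorem~\ref{rhocolth}/the Stabilization Theorem, which is the hard content; one cannot get it for free from the reduction-to-$\hat\rho$ isomorphisms.

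There is also an early misstatement: you claim that ``by the argument at the end of the proof of Theorem~\ref{noncombth}, every irreducible in $\tilde\rho_{n,3}$ has at most $2$ columns.'' The $j_1,j_2,j_3$-counting argument in that proof applies specifically to $n=2$ (binary brackets in $\lie_{k+1}$) and does not generalize to arbitrary $n$; indeed the conclusion is false in general, e.g.\ $\tilde\rho_{3,3}\cong S^{321^2}$ has three columns. Fortunately your main reduction chain (cancelling $S^{3^{n-1}1}$ from the two decompositions of $\rho_{n,3}$) does not actually rely on this claim, so it is a cosmetic error — but the dependence on $\gamma_{n,3}=0$ is essential and unproved. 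The paper avoids the problem entirely by a direct, non-inductive argument: it realizes the space of non-combs modulo antisymmetry as $\tilde V_{n,3}\cong\sgn_2[\sgn_n]\bullet\sgn_{n-2}$, identifies the relevant Jacobi relations with the image of the operator $\varphi_2$ of Lemma~\ref{indlem}, uses the eigenvalue/content computation of that lemma to get $\tilde V_{n,3}/R\cong S^{3^{n-2}21^2}$ (a single irreducible), and then uses $\tilde\rho_{n,3}\ne 0$ (Theorem~\ref{noncombth}(3)) together with $\tilde\rho_{n,3}\subsetsim\tilde V_{n,3}/R$ to force equality. That is where the ``hands-on computation'' you anticipate actually lives, and it is packaged in Lemma~\ref{indlem} rather than in an enumeration of non-comb types.
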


\begin{proof} Let $ V_{n,k}$ be the vector space of $n$-ary bracketed permutations on $[k(n-1)+1]$ subject only to the anticommuting relations, but not the generalized Jacobi relations.  Let $\tilde V_{n,k}$ be the subspace of  $ V_{n,k}$ generated by the bracketed permutations that are not combs.    Clearly $\tilde V_{n,k}$ is  invariant under the action of $\sg_{k(n-1)+1}$ on the multilinear component of the full vector space $V_{n,k}$.

Now let $k=3$. For each $\alpha \in \sg_{3n-2}$, let $v_{\alpha} $ be the bracketed permutation (subject only to the anticommuting relations),
$$ [[\alpha(1), \dots, \alpha(n)], [\alpha(n+1), \dots, \alpha(2n)], \alpha(2n+1),\dots,\alpha(3n-2)].$$
Clearly $\{v_\alpha: \alpha \in \sg_{3n-2}\}$ forms a of generating set  for $\tilde V_{n,3}$ and $\sigma \in \sg_{3n-2}$ acts on the generators by
$\sigma v_{\alpha} = v_{\sigma \alpha}$.  For an $\sg_i$-module $U$ and  an $\sg_j$-module $V$, let   
$U[V]$  denote the $\sg_{ij}$-module obtained by  inducing the $(\sg_j \wr \sg_i)$-module $U \otimes T^i(V)$, where $ \wr $ denotes  wreath product, and $T^i$ denotes the $i$th tensor power.  We will refer to $U[V]$ as the {\it composition product} of $U$ and $V$.
    Now note that as $\sg_{3n-2}$-modules,
\begin{equation}  \tilde V_{n,3} \cong  \sgn_{2}[\sgn_n] \bullet \sgn_{n-2}.\end{equation}

It is well known (see \cite[Section I8, Example 9]{Ma}) that the composition product 
$\sgn_{2}[\sgn_n]$ decomposes into irreducibles as follows
$$
\sgn_{2}[\sgn_n] \cong \bigoplus_{\begin{subarray}{c} i\in [n] \\ i \mbox{ \scriptsize{odd} } \end{subarray}} S^{2^{n-i} 1^{2i}}.
$$
Hence 
\begin{equation} \label{decompnoneq} \tilde V_{n,3} \cong \bigoplus_{\begin{subarray}{c} i\in [n] \\ i \mbox{ \scriptsize{odd} } \end{subarray}} S^{2^{n-i} 1^{2i}} \bullet \sgn_{n-2}.\end{equation}

For $\alpha \in \sg_{3n-2}$, we form the relation
$$R_\alpha: = 2(n-2) v_\alpha - \sum_{\begin{subarray}{c} i \in [2n] \\  j \in [3n-2]\setminus [2n]  \end{subarray}} v_{\alpha (i,j)}.$$
Let $R$ be the submodule of $\tilde V_{n,3}$ given  by $$R= \langle R_\alpha : \alpha \in \sg_{3n-2}\rangle.$$
Note that $R_\alpha$ is the sum of  $2(n-2)$ relations, namely,
$$v_\alpha - \sum_{i=1}^n  v_{\alpha (i,j)} \qquad \mbox{and} \qquad v_\alpha - \sum_{i =n+1}^{2n} v_{\alpha (i,j)}, \quad \mbox{ for each } j = 2n+1, \dots,  3n-2.$$
   By antisymmetry of the brackets these relations are alternative generalized Jacobi relations (\ref{filipeq}).  
Consequently,
\begin{equation} \label{modReq} \tilde \rho_{n,3} \subsetsim \tilde V_{n,3} / R, \end{equation} 
where $U \subsetsim V$ means $U$ is isomorphic to a submodule of $V$. 
We will show that the two modules are isomorphic to the irreducible
$S^{3^{n-2}21^2}$ (and are therefore  equal to each other).

Let $\tau$ be the linear operator on 
$\tilde V_{n,3} \cong \sgn_2[\sgn_n] \bullet \sgn_{n-2}$ that takes each generator $v_\alpha$ to $R_\alpha$. It is not difficult to see that this is a well defined
linear operator that restricts to a linear operator on   the direct summands of (\ref{decompnoneq}).   Let  $\tau_i$ be the restriction of $\tau$ to the direct summand $S^{2^{n-i} 1^{2i}} \bullet \sgn_{n-2}$.  Note that Lemma~\ref{indlem} can be applied since $\tau_i$ is equal to $\varphi_2^{\lambda_1,\lambda_2}$, where $\lambda_1 = 2^{n-i} 1^{2i} $ and $\lambda_2 = 1^{n-2}$.   For $\lambda_1$ and $\lambda_2$ to be compatible, $i$ must equal $0,1,2$, or $n$. Since $i$ is odd, we can eliminate $i=0,2$.  We can also eliminate $i=n$, since in this case, the number of columns of  $\lambda_1$ is $1$, which is less than $d=2$. This yields $$\ker \tau_1 =  S^{2^{n-1} 1^{2}\conc 1^{n-2}} = S^{3^{n-2}2 1^2}.$$
and
$\ker \tau_i =   (0)$ if $i \ne 1$.
Hence 
\begin{equation} \label{kereq}\ker \tau = \bigoplus_{\mbox{ \scriptsize odd } i \in [n]  } \ker \tau_i = \ker \tau_1 = S^{3^{n-2}2 1^2}.\end{equation}

 We claim that  
\begin{equation}  \im \tau= R.\end{equation}  Indeed, we have $\tau(v_\alpha) = R_\alpha$ for all $\alpha \in \sg_{3n-2}$.  Since the $ v_\alpha$ generate $\tilde V_{n,3}$ and the $R_\alpha$ generate $R$, the claim holds.

We  conclude from this and (\ref{kereq}) that
$$\tilde V_{n,3}/ R \cong S^{3^{n-2}21^2}.$$

Now by (\ref{modReq}) we  have that $\tilde \rho_{n,3} $ is either trivial or is isomorphic to  the irreducible $S^{3^{n-2}21^2}$.  The former case is impossible by Part~3 of Theorem~\ref{noncombth}.  Therefore $\tilde \rho_{n,3} \cong S^{3^{n-2}21^2}$, as desired.
\end{proof}

Now by Theorem~\ref{noncombth} we have,
\begin{corollary}[Theorem~\ref{k=3}] For all $n\ge 2$, as
 $\sg_{3n-2}$-modules,   $$\rho _{n,3} \cong S^{3^{n-1}1} \oplus S^{3^{n-2}21^2}.$$
 Consequently, by the hook length formula, $\dim \rho _{n,3} = \displaystyle \frac{4}{ \prod_{i=1}^3 (n+ i)} {3n \choose n,n,n}$.
\end{corollary}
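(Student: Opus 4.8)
The plan is to combine the two structural inputs already assembled in the excerpt: Theorem~\ref{noncombth}, which splits $\rho_{n,3}$ as $S^{3^{n-1}1}\oplus\tilde\rho_{n,3}$, and Theorem~\ref{keq3th}, which identifies $\tilde\rho_{n,3}\cong S^{3^{n-2}21^2}$. First I would invoke Theorem~\ref{noncombth}(2) with $k=3$ to write $\rho_{n,3}\cong S^{3^{n-1}1}\oplus\tilde\rho_{n,3}$ as $\sg_{3n-2}$-modules. Then I would substitute the isomorphism $\tilde\rho_{n,3}\cong S^{3^{n-2}21^2}$ furnished by Theorem~\ref{keq3th}, yielding $\rho_{n,3}\cong S^{3^{n-1}1}\oplus S^{3^{n-2}21^2}$. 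Since both of the cited theorems are proved for all $n\ge 2$ (the base case $n=2$ being covered as well, where $S^{3^01}=S^{31}$ and $S^{3^020\,1^2}=S^{21^2}$ recover the Kraskiewicz--Weyman answer $\lie_4\cong S^{31}\oplus S^{21^2}$), this already establishes the module decomposition.

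For the dimension statement, I would compute $\dim\rho_{n,3}=\dim S^{3^{n-1}1}+\dim S^{3^{n-2}21^2}$ using the hook length formula on each of the two shapes. The shape $3^{n-1}1$ is a $3\times n$ rectangle with a single extra box appended; the shape $3^{n-2}21^2$ is obtained from the $3\times n$ rectangle by moving boxes around on the last two rows. In both cases the hook lengths differ from those of the plain $3\times n$ rectangle only in a bounded number of cells, so each dimension can be written as a rational multiple of $\binom{3n}{n,n,n}=(3n)!/(n!)^3$, the number of standard Young tableaux of the $3\times n$ rectangle divided appropriately — more precisely $\dim S^{n^3}=\frac{2\,(3n)!}{n!(n+1)!(n+2)!}$. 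A direct hook computation gives $\dim S^{3^{n-1}1}=\frac{3\,(3n)!}{(n-1)!(n+1)!(n+3)!}\cdot(\text{small factor})$ and similarly for the other shape; summing and simplifying should collapse to $\frac{4}{(n+1)(n+2)(n+3)}\binom{3n}{n,n,n}$. This is a routine but slightly fiddly algebraic simplification; I would carry it out by clearing the common factor $(3n)!/((n-1)!\,(n+1)!\,(n+3)!)$ or equivalently by writing both dimensions over the common denominator $\prod_{i=1}^3(n+i)$.

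The only genuine content here has already been discharged: there is no new obstacle in this corollary itself, since it is a formal consequence of Theorems~\ref{noncombth} and~\ref{keq3th}. If anything, the one point requiring a line of care is the arithmetic identity $\dim S^{3^{n-1}1}+\dim S^{3^{n-2}21^2}=\frac{4}{\prod_{i=1}^3(n+i)}\binom{3n}{n,n,n}$, and I would verify it either by the hook length formula as above or, as a sanity check, by confirming it against the known small values $n=2$ ($\dim\rho_{2,3}=\dim\lie_4=6$, and $\frac{4}{3\cdot4\cdot5}\cdot\frac{6!}{(2!)^3}=\frac{4}{60}\cdot 90=6$) and $n=3$. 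Thus the proof is: apply Theorem~\ref{noncombth}, substitute Theorem~\ref{keq3th}, and finish with the hook length computation.
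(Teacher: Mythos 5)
Your proof is correct and is precisely the paper's argument: the module decomposition is obtained by substituting Theorem~\ref{keq3th} ($\tilde\rho_{n,3}\cong S^{3^{n-2}21^2}$) into Theorem~\ref{noncombth}(2) ($\rho_{n,3}\cong S^{3^{n-1}1}\oplus\tilde\rho_{n,3}$), and the dimension formula then follows by the hook length formula. The paper states this as a one-line corollary without spelling out the hook-length arithmetic, which you flesh out correctly (your $n=2$ sanity check $\frac{4}{60}\cdot 90=6=\dim\lie_4$ confirms the formula).
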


\section{Proof of the Stabilization Theorem, Part 1: trees and quotients} \label{treequotsec}

In this section we present the first part of the proof of the Stabilization Theorem discussed in the introduction, which we restate here.
 For $k \ge 1$, set $\beta_{1,k} = \rho_{1,k} = S^1$.  For $n \ge 2$ and $k \ge 1$, let $\beta_{n,k}$ be the $\sg_{k(n-1)+1}$-module whose decomposition into irreducibles is obtained by adding a row of length $k$ to the top of each Young diagram in the decomposition of $\rho_{n-1,k}$.

\begin{theorem}[Theorem~\ref{introstabth} (Stabilization Theorem)] \label{stabth} Let $n \ge k \ge 1$.  Then   as $\sg_{k(n-1)+1}$-modules, 
\begin{equation}\label{stabeq} \rho_{n,k} \cong \beta_{n,k}.\end{equation} \end{theorem}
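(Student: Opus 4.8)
The goal is to show $\rho_{n,k} \cong \beta_{n,k}$ when $n \ge k$. By Theorem~\ref{decomposition_thm_2} we already know $\rho_{n,k} \cong \beta_{n,k} \oplus \gamma_{n,k}$, where every irreducible of $\gamma_{n,k}$ has at most $k-1$ columns. So the entire content of the Stabilization Theorem is the assertion that $\gamma_{n,k} = 0$ whenever $n \ge k$; equivalently, that \emph{no} irreducible $S^\lambda$ of $\rho_{n,k}$ with at most $k-1$ columns occurs when $n \ge k$. The plan is to prove this by finding, for $n \ge k$, a suitable explicit quotient of $\rho_{n,k}$ (or a presentation refinement) in which only Young diagrams with exactly $k$ columns can survive, and then to show that the passage from $\rho_{n,k}$ to that quotient loses nothing. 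I would build this quotient out of the ``tree Specht modules of the first kind'' announced in the introduction: one starts from the comb/non-comb generators of $V_{n,k} \cong \sgn_n \bullet \sgn_{n-1}^{\bullet(k-1)}$ and imposes the alternative Jacobi relations \eqref{filipeq} organized along rooted trees rather than just combs, so that the relations become instances of the operator $\varphi_d$ from Lemma~\ref{indlem} applied to induction products of Specht modules stacked according to tree structure.

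\textbf{Key steps, in order.} First, I would reduce to counting multiplicities of $(k-1)$-column irreducibles: by Theorem~\ref{decomposition_thm_2} it suffices to show that for $n \ge k$ every $S^\lambda$ in $\rho_{n,k}$ has exactly $k$ columns. Second, I would set up the tree Specht module framework: given a rooted tree $P$ on the bracket-structure of an $n$-ary bracketed word, attach to it a module built as an iterated induction product $\sgn_n \bullet \sgn_{n-1} \bullet \cdots$ quotiented by the images of operators $\varphi_d$ of Lemma~\ref{indlem2} applied at each internal edge, and prove (by induction on the number of internal vertices, using Lemma~\ref{indlem2} at each step exactly as in the proof of Theorem~\ref{newpresth}) that this tree Specht module decomposes into genuine Specht modules $S^\mu$ whose shapes $\mu$ are forced by the tree: concatenations $\lambda_1 \conc \lambda_2 \conc \cdots$ of columns whose number is dictated by $P$. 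In the relevant case the tree structure forces exactly $k$ columns. Third, I would show $\rho_{n,k}$ maps onto this tree-Specht-type object (via Proposition~\ref{coneprop} and the fact that the Jacobi relations \eqref{filipeq} are exactly the operator relations), and conversely that for $n \ge k$ the kernel of this surjection carries no $(k-1)$-column irreducibles — this is where the hypothesis $n \ge k$ must enter, presumably because when $n \ge k$ the columns in the relevant induction products are long enough (length $\ge n-1$ at each stage) that the compatibility condition in Lemma~\ref{indlem} is automatically met and no ``short'' concatenation can occur. Combining, $\rho_{n,k}$ has only $k$-column irreducibles, so $\gamma_{n,k} = 0$, i.e.\ \eqref{stabeq} holds.

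\textbf{Main obstacle.} The hard part will be the converse direction of Step three: controlling the kernel of the surjection from $\rho_{n,k}$ onto the tree-Specht quotient and showing it contributes no irreducible with $\le k-1$ columns. The subtlety is that $\rho_{n,k}$ is itself defined as a quotient of $V_{n,k}$ by \emph{all} Jacobi relations, not just those realizable along a single tree shape; one must show that the additional relations coming from non-tree-shaped bracketings are consequences of the tree ones (or at least do not remove any $k$-column irreducible), and simultaneously that imposing the tree relations already kills everything with fewer columns. I expect this to require a careful induction on $n$ (peeling off a row of length $k$, in the spirit of Lemma~\ref{hatlem2}), combined with the dimension/multiplicity bookkeeping from Theorem~\ref{decomposition_thm_2} and Pieri's rule (Proposition~\ref{lekprop}), and it is the reason the proof is split across Sections~\ref{treequotsec} and~\ref{trspecsec}: the ``first kind'' of tree Specht module handles the quotient side and the ``second kind'' is presumably needed to pin down the kernel. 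The case $n = k$ (on the diagonal) will likely be the most delicate sub-case and may need separate treatment.
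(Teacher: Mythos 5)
Your top-level reduction matches the paper: by Theorem~\ref{decomposition_thm_2} it suffices to show that when $n \ge k$ every irreducible of $\rho_{n,k}$ has exactly $k$ columns, and you correctly anticipate that tree Specht modules built from the $\varphi_d$ operators of Lemmas~\ref{indlem} and~\ref{indlem2} are the main tools. But the architecture of your Step three has a genuine gap. The paper does not build one surjection from $\rho_{n,k}$ onto a tree-Specht object and then control its kernel. Instead it filters $\rho_{n,k}$ by tree shapes $T \in \mathbb T_{n,k}$, ordered lexicographically by the depth vector $\delta(T)$, and argues by induction on $\delta(T)$. The crucial step is the opposite containment from the one you propose: for increasing $T$ all of whose internal nodes have a leaf child, the successive quotient $\rho_T/(\rho_T \cap \rho_{D(T)})$ embeds as a \emph{submodule} of a tree Specht module $S^{\hat T, \mu_T}$ of the first kind (Lemma~\ref{treetablem}), and this is exactly what makes the column bound propagate with no need to control any kernel. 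For non-increasing $T$, Lemma~\ref{depveclem} rewrites bracketed permutations of shape $T$ via the generalized Jacobi relation as sums of those with lexicographically smaller depth vector, driving the induction. Without the depth-vector filtration and this submodule-direction embedding, the "control the kernel" half of your Step three is exactly the hard part you'd be left with, and I don't see how it would go.

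Two smaller but real inaccuracies. The hypothesis $n \ge k$ does not enter through long columns making compatibility in $\varphi_d$ automatic; it enters through Lemma~\ref{mindeglem}, which says that when $n \ge k$ every internal node of a tree in $\mathbb T_{n,k}$ has a leaf child, so $\mu_T$ (the leaf-children counts) is a genuine $\hat T$-partition and the tree Specht module $S^{\hat T,\mu_T}$ is defined at all. And the two kinds of tree Specht modules are not split along a "quotient side" and a "kernel side": the first kind is what the filtration quotients embed into, while the second kind $\tilde S^{T,\mu}$ is purely a device to prove the column count for the first kind — Theorem~\ref{firstkindth} follows from the inclusion $S^{T,\mu} \subsetsim \tilde S^{T,\mu}$ (Theorem~\ref{2kindth}) together with the fact that $\tilde S^{T,\mu}$ has only $|T|$-column irreducibles, which is where Lemma~\ref{indlem2} is actually applied (Theorem~\ref{col2th}).
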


In light of Theorem~\ref{decomposition_thm_2},  equation (\ref{stabeq})  is equivalent to 
the assertion that  all the irreducibles contained in $\rho_{n,k}$ have exactly $k$ columns when $n \ge k$. In the previous section, we proved this for $3=k \le n$  by constructing the $\sg_{3(n-1)+1}$-module filtration 
 $$(0) \subset \tilde \rho_{n,3} \subset \rho_{n,3},$$ where
 $\tilde \rho_{n,3} $ is the submodule of $\rho_{n,3}$ spanned by the non-combs, and then showing that 
$\tilde \rho_{n,3}$ and $\rho_{n,3} / \tilde \rho_{n,3}$  are both irreducibles with  exactly $3$ columns; see Theorems~\ref{keq3th} and~\ref{noncombth}.  In this section,  with considerably more effort, we again employ the idea of working with quotients of submodules generated by bracketed permutations of  fixed ``shapes".

It is natural to view $n$-ary bracketed permutations as leaf-labeled plane rooted  $n$-ary trees.  Let $T$ be an (unlabeled) plane rooted $n$-ary tree with $k$ internal nodes and let $w$ be a labeling of the leaves of $T$ with distinct labels from set $A$, where $|A| = k(n-1) +1$.  Then the leaf-labeled tree $(T,w)$ encodes the bracketed permutation $[(T,w)]$ defined recursively as follows. 
If $k=0$ then $[(T,w)] = a$, where $A=\{a\}$, and if $k > 0$ then $[(T,w)] = [ [(T_1,w_1)],[(T_2,w_2)],\dots, [(T_n,w_n)]]$ where $(T_1,w_1),(T_2,w_2),\dots, (T_n,w_n)$ are the subtrees of the root of $(T,w) $ ordered from left to right.  We say that $ T$ is the {\em shape} of the bracketed permutation $[(T,w)]$. 

For example, in Figure~\ref{brtrfig} two leaf labeled plane rooted ternary trees $(T,w)$ are given with their associated bracketed permutation $[(T,w)]$ beneath them.  

\begin{figure}[h!]  \begin{center} \includegraphics[height=1.7in]{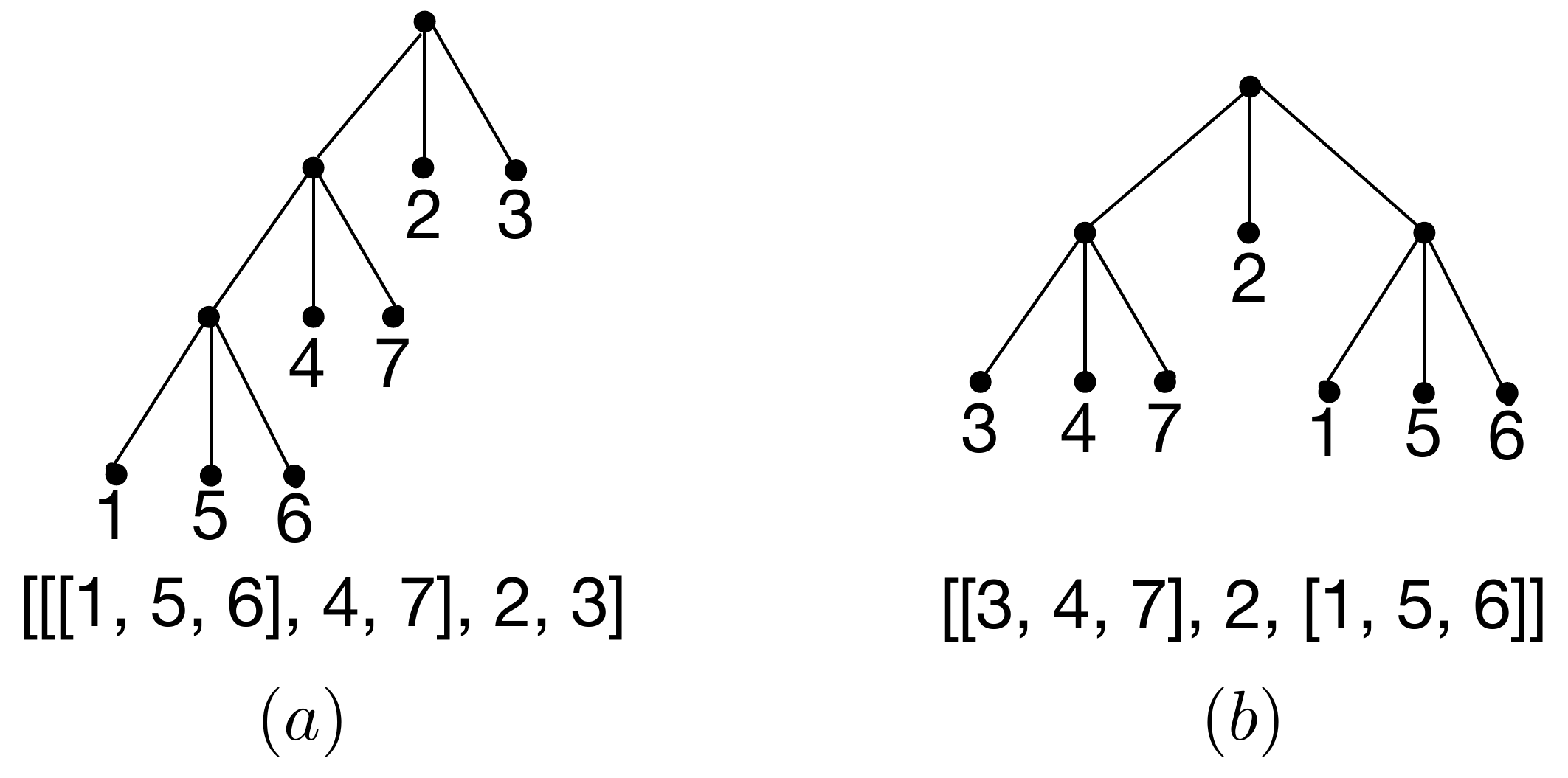} \end{center}
\caption{Leaf labeled ternary trees $(T,w)$ and corresponding bracketed permutations [(T,w)]}
\label{brtrfig}
\end{figure}

For $n,k \ge 1$, let $\mathbb T_{n,k}$ be the set of  plane  rooted $n$-ary trees  with $k$ internal nodes.  Note that for the leaf labeled trees $(T,w)$ in Figure~\ref{brtrfig}, the unlabeled tree $T$ belongs to $\mathbb T_{3,3}$.  
We start with a simple observation. 
 \begin{lemma}\label{mindeglem} Let $n \ge k \ge 1$ and $T \in \mathbb T_{n,k}$.  Then every internal node of $T$ has a  child that's a leaf. 
\end{lemma}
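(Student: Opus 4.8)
The plan is to prove this by a counting argument on internal nodes, exploiting that $k \le n$. First I would set up the key invariant: for a plane rooted $n$-ary tree $T$ with $k$ internal nodes, the total number of leaves is $k(n-1)+1$, and each of the $k$ internal nodes has exactly $n$ children (each child being either a leaf or an internal node). Suppose for contradiction that some internal node $v$ of $T$ has all $n$ of its children being internal nodes. I want to derive a contradiction with $k \le n$ by showing $T$ must then have strictly more than $k$ internal nodes, or rather by bounding things the other way.

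The cleaner route is probably to count leaves. Root the tree and consider the internal node $v$ all of whose children are internal. Then among the $k$ internal nodes we have $v$ together with its $n$ children $v_1,\dots,v_n$, so $k \ge n+1 > n$, contradicting $n \ge k$. Wait — this already works: if $v$ is an internal node with no leaf child, then $v$ and its $n$ children are $n+1$ distinct internal nodes of $T$, forcing $k \ge n+1$, which contradicts $k \le n$. So the argument is actually very short. The only thing to be careful about is the edge case $k=0$ (no internal nodes, statement vacuous) and making sure the children of an internal node are genuinely distinct nodes and genuinely internal under the hypothesis — both immediate from the definitions of plane rooted $n$-ary tree given just above the lemma.

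So the steps I would carry out are: (1) recall that in $\mathbb T_{n,k}$ every internal node has exactly $n$ children; (2) assume $T \in \mathbb T_{n,k}$ with $n \ge k \ge 1$ has an internal node $v$ none of whose children is a leaf; (3) observe the $n$ children of $v$ are then pairwise-distinct internal nodes, none equal to $v$, giving at least $n+1$ internal nodes in $T$; (4) conclude $k \ge n+1$, contradicting $n \ge k$; hence every internal node has a leaf child.

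Honestly there is no real obstacle here — the main "difficulty" is just recognizing that the bound $k \le n$ makes the pigeonhole trivial. If one wanted a slightly more robust phrasing (in case the authors intend something about the total count rather than a single bad node), one could instead argue: a tree in which \emph{some} internal node lacks a leaf child contains as a subgraph a node with $n$ internal children, and the smallest such configuration already uses $n+1$ internal nodes. I would present the contradiction argument as above since it is the most direct and uses only the definitions already in the excerpt.
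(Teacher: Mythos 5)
Your argument is correct and is essentially the same as the paper's: both observe that an internal node with no leaf-child, together with its $n$ (necessarily internal) children, forces at least $n+1$ internal nodes, contradicting $k\le n$. The only cosmetic difference is that the paper phrases the count as ``the subtree rooted at $a$ has at least $n+1$ internal nodes,'' whereas you name the $n+1$ nodes explicitly.
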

\begin{proof}Suppose $T$ has an internal node $a$ with no leaf-children.  Then the subtree
rooted at $a$ has at least $n+1$ internal nodes.  But since $k$ is the total number of internal nodes of $T$, we have $k \ge n+1 $ which contradicts the hypothesis.  
\end{proof}

Let $n,k \ge 1$. For each $T \in \mathbb T_{n,k}$, define the {\em depth vector} of $T$ by $$\delta(T) =(\delta_1(T),\delta_2(T),\dots, \delta_{d_T}(T)),$$ where $\delta_i(T)$ is the number of leaves of $T$ at depth $i$ and $d_T$ is the depth of $T$. (The depth of a node is the length of the path from the root to the node, and the depth of $T$ is the maximum depth over all nodes of $T$).   For the leaf labeled tree $(T,w)$ in Figure~\ref{brtrfig} (a), the  depth vector of the unlabeled tree $T$  is $\delta(T) = (2,2,3)$,  and for the tree $(T,w)$ in Figure~\ref{brtrfig} (b), the depth vector of the unlabeled tree  $T$ is $\delta(T)= (1,6)$.

We say that $T\in \mathbb T_{n,k}$  is {\em increasing} if for all  internal nodes $a, b$ of $T$, if $a$ is the parent of $b$ then  the number $\mu_T(a)$ of leaf-children of $a$  is less than or equal to the number $\mu_T(b)$ of leaf-children  of $b$.  In other words, $T$ is increasing if when the internal nodes $a$ of $T$ are labeled with $\mu_T(a)$ then the labels increase as we read down the tree. Examples  of  increasing and non-increasing trees   are given  in Figure~\ref{mutreefig}.

\begin{figure}[h!]  \begin{center} \includegraphics[height=1.7in]{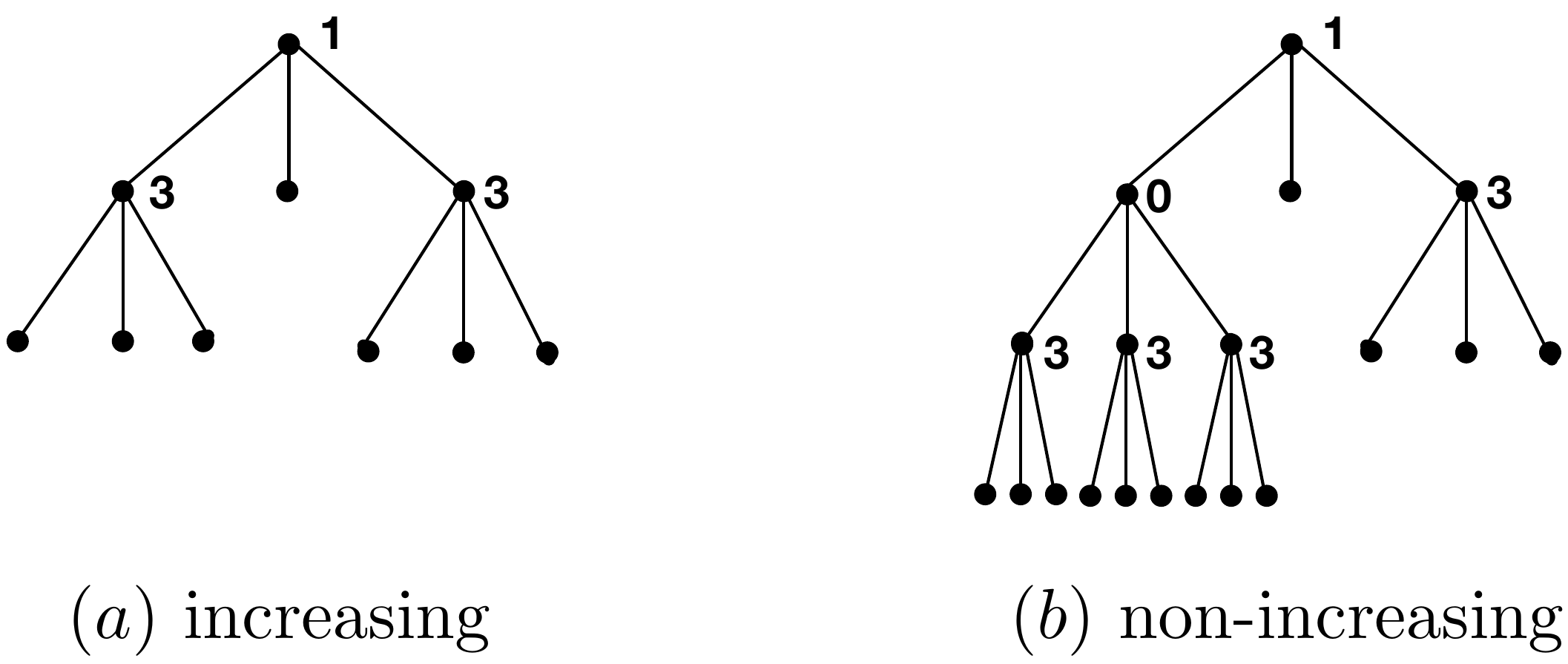} \end{center}
\caption{Internal nodes $a$ of $T$ labeled with $\mu_T(a)$}
\label{mutreefig}
\end{figure}

 \begin{ex}\label{combex} {\rm The {\em comb tree} $C_{n,k} \in \mathbb T_{n,k}$ is defined recursively as follows: if $k=1$ then $C_{n,k}$ is a root with $n$ children that are leaves and if $k>1$ then the subtree rooted at the left most child of the root of $C_{n,k}$ is $C_{n,k-1}$ and the other children of the root are leaves.   For example, the unlabeled tree $T$  in Figure~\ref{brtrfig} (a) is a comb, while the one in   Figure~\ref{brtrfig} (b) is not.  Note that $C_{n,k}$ is the shape of the $n$-comb defined in Section~\ref{Filsubsec}.  Also note that $C_{n,k}$ is increasing,
its depth vector $\delta(C_{n,k}) $ is $(n-1,n-1,\dots,n-1,n)$,
and $C_{n,k}$ has the lexicographically largest depth vector in $\mathbb T_{n,k}$.   }
\end{ex}

\begin{lemma} \label{depveclem}  
Any bracketed permutation of non-increasing shape $T \in \mathbb T_{n,k}$ can be expressed as a  sum (in $\rho_{n,k}$) of bracketed permutations whose shape $S$  satisfies
  $\delta(S) <_{lex} \delta(T)$, where $<_{lex}$ denotes lexicographical order.
 
 \end{lemma}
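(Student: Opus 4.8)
The plan is to show that if $T$ is non-increasing, there is an internal node $a$ with a child $b$ (also internal) such that $\mu_T(a) > \mu_T(b)$, and then to use the alternative generalized Jacobi relation (\ref{filipeq}) to rewrite any bracketed permutation of shape $T$ as a sum of bracketed permutations whose shapes have lexicographically smaller depth vectors. First I would reduce to a local statement: choose $a$ to be an internal node violating the increasing condition, meaning $a$ is the parent of an internal node $b$ with $\mu_T(a) > \mu_T(b) \ge 0$. By Lemma~\ref{mindeglem} (applicable since $n \ge k$), $a$ itself has a leaf-child, so $\mu_T(a) \ge 1$; in particular $a$ has at least one leaf-child and at least one internal child, namely $b$. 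The idea is that the subtree rooted at $a$ looks like a bracket $[w_1, \dots, w_n]$ in which $w_1 = [u_1, \dots, u_n]$ corresponds to the subtree at $b$, and enough of the $w_2, \dots, w_n$ are single leaves (because $\mu_T(a)$ is large — in fact, I would want $\mu_T(a) \ge 1$ to produce a usable $y_1$).

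Next I would apply (\ref{filipeq}) to this subtree: writing the subtree at $a$ as $[[u_1,\dots,u_n], y_1, w_3, \dots, w_n]$ where $y_1$ is a leaf-child of $a$ (here I'm relabeling so the subtree at $b$ is the first argument, using antisymmetry (\ref{antsym})), relation (\ref{filipeq}) expresses it as
$$\sum_{i=1}^{n} [[u_1, \dots, u_{i-1}, y_1, u_{i+1}, \dots, u_n], u_i, w_3, \dots, w_n].$$
In each summand on the right, the first argument is the subtree formerly at $b$ with one leaf-child swapped for $y_1$ (so still $\mu_T(b)$ leaf-children, i.e. unchanged locally at $b$'s position), but now $u_i$ — which was either a leaf or an internal subtree — sits where $y_1$ used to be. I would argue that after this move, the new tree $S$ has one fewer leaf at the depth of $y_1$ (namely $\mathrm{depth}(a)+1$) whenever $u_i$ is an internal node, and when $u_i$ is a leaf the operation is a symmetry that doesn't change the shape at all, so those terms can be collected/cancelled or handled separately. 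The crucial bookkeeping: the depth of $y_1$ relative to $T$ is $\mathrm{depth}(a)+1$, and the depth of $u_i$ in $T$ is $\mathrm{depth}(b)+1 = \mathrm{depth}(a)+2$; after the swap, in those terms where $u_i$ is internal, a whole subtree (with at least one leaf) moves from depth $\mathrm{depth}(a)+2$-and-below up to depth $\mathrm{depth}(a)+1$-and-below, while $y_1$ moves down from $\mathrm{depth}(a)+1$ to $\mathrm{depth}(a)+2$. I need to check the net effect on $\delta$ at the shallowest affected coordinate is a strict decrease.

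The main obstacle I anticipate is precisely this last point: verifying that the depth vector strictly decreases in lexicographic order, and in particular that the \emph{shallowest} coordinate at which $\delta(S)$ and $\delta(T)$ differ goes down rather than up. The delicate case is when $u_i$ is itself a leaf: then the ``swap'' of two leaf-children of two adjacent internal nodes does change where leaves sit (one goes up, one goes down), and one must ensure these are handled by recognizing that such a term either leaves the shape unchanged (if we swap a leaf-child of $b$ with the leaf-child $y_1$ of $a$, giving back essentially the same tree up to the symmetry already accounted for) or, more carefully, by choosing $a$ to be the \emph{deepest} non-increasing violation so that below it everything is increasing and the only shape-changing moves are the ones that move internal subtrees upward. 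I would organize the argument by inducting on the number of internal nodes $b$ that are children of an internal node $a$ with $\mu_T(a) > \mu_T(b)$, or alternatively by a direct appeal to well-foundedness of $<_{lex}$: apply one Jacobi move, observe every resulting shape $S$ has $\delta(S) \le_{lex} \delta(T)$ with at least the ``moving'' terms strictly smaller and the shape-preserving terms combining to a coefficient that lets us solve for the shape-$T$ term (using that the relation has a nonzero coefficient on the original), then iterate. Since $\mathbb T_{n,k}$ is finite, this terminates, yielding the claim.
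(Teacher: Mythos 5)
There is a genuine gap, and it stems from your choice of Jacobi relation. You apply the alternative relation (\ref{filipeq}) at node $a$, taking the inner bracket to be the subtree at $b$ and $y_1$ a leaf-child of $a$. As you yourself note, each summand in which $u_i$ is a \emph{leaf} has exactly the same shape $T$ (the swap of two leaf labels at depths $\mathrm{depth}(a)+1$ and $\mathrm{depth}(a)+2$ changes the labeling but not the unlabeled tree). So the relation reads
$$[(T,w)] \;=\; \sum_{i:\,u_i \text{ leaf}} [(T,w_i')] \;+\; \sum_{i:\,u_i \text{ internal}} [(S_i,v_i)],$$
which does \emph{not} express $[(T,w)]$ as a sum of terms of strictly smaller depth vector; it expresses it as such a sum \emph{plus} other bracketed permutations of the same shape $T$. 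Your proposed fixes do not close this: choosing $a$ to be the deepest violation has no bearing on whether $b$ has leaf-children, and ``solving for the shape-$T$ term'' would require an argument that the resulting linear system on shape-$T$ permutations is solvable, which you do not supply (and each iteration of (\ref{filipeq}) generates yet more shape-$T$ terms, so termination is not obvious). Your reliance on Lemma~\ref{mindeglem} is also a red herring — the hypothesis $n\ge k$ does not appear in the lemma being proved, and in any case $\mu_T(a) > \mu_T(b) \ge 0$ already forces $\mu_T(a)\ge 1$ — but that is a minor point.

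The missing idea is that the \emph{original} generalized Jacobi relation (\ref{type1}), not (\ref{filipeq}), is the right tool. Apply (\ref{type1}) at $a$ with the inner bracket $[x_1,\dots,x_n]$ being the subtree at $b$ and $x_{n+1},\dots,x_{2n-1}$ the remaining children of $a$. In each summand the children of $a$ become $x_1,\dots,\widehat{x_i},\dots,x_n$ together with a new internal node, so $\mu_S(a)$ equals $\mu_T(b)$ if $x_i$ is internal and $\mu_T(b)-1$ if $x_i$ is a leaf; either way $\mu_S(a) < \mu_T(a)$, and nothing at shallower depths changes. Thus \emph{every} term on the right-hand side of (\ref{type1}) has $\delta(S) <_{lex} \delta(T)$, with no shape-preserving residue to worry about. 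Switching to (\ref{type1}) gives the clean one-step argument you were looking for.
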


\begin{proof} 

Assume $[(T,w)]$ is a bracketed permutation such that $T \in \mathbb T_{n,k}$ is  non-increasing.  Then $T$  has internal nodes $a$ and $b$, where $a$ is the parent of $b$ and $\mu_T(a) > \mu_T(b)$.    When we apply the generalized Jacobi relation (\ref{type1}) to the leaf-labeled subtree of $(T,w)$ rooted at  $a$, we express $[(T,w)]$ as a  sum of bracketed permutations whose respective shapes $S$ are obtained from $T$ by exchanging  all the subtrees rooted at the children of $a$ other than $b$ with the subtrees rooted at all but one child  $x$ of $b$.  (See Figure~\ref{nondecfig} for an example.) This implies $\mu_S(a) = \mu_T(b) < \mu_T(a)$ if $x$ is an internal node, and $\mu_S(a) = \mu_T(b) -1< \mu_T(a)$ if $x$ is a leaf.  In either case, $\mu_S(a) < \mu_T(a)$, which implies that $\delta_j(S) < \delta_j(T)$, where $j-1$ is the depth of $a$.   Since $\delta_i(S) = \delta_i(T)$ for all  $i <j$, we have $\delta(S) <_{lex} \delta(T)$ as desired.
 \end{proof}
 
 \begin{figure}[h!]  \begin{center} \includegraphics[height=2.4in]{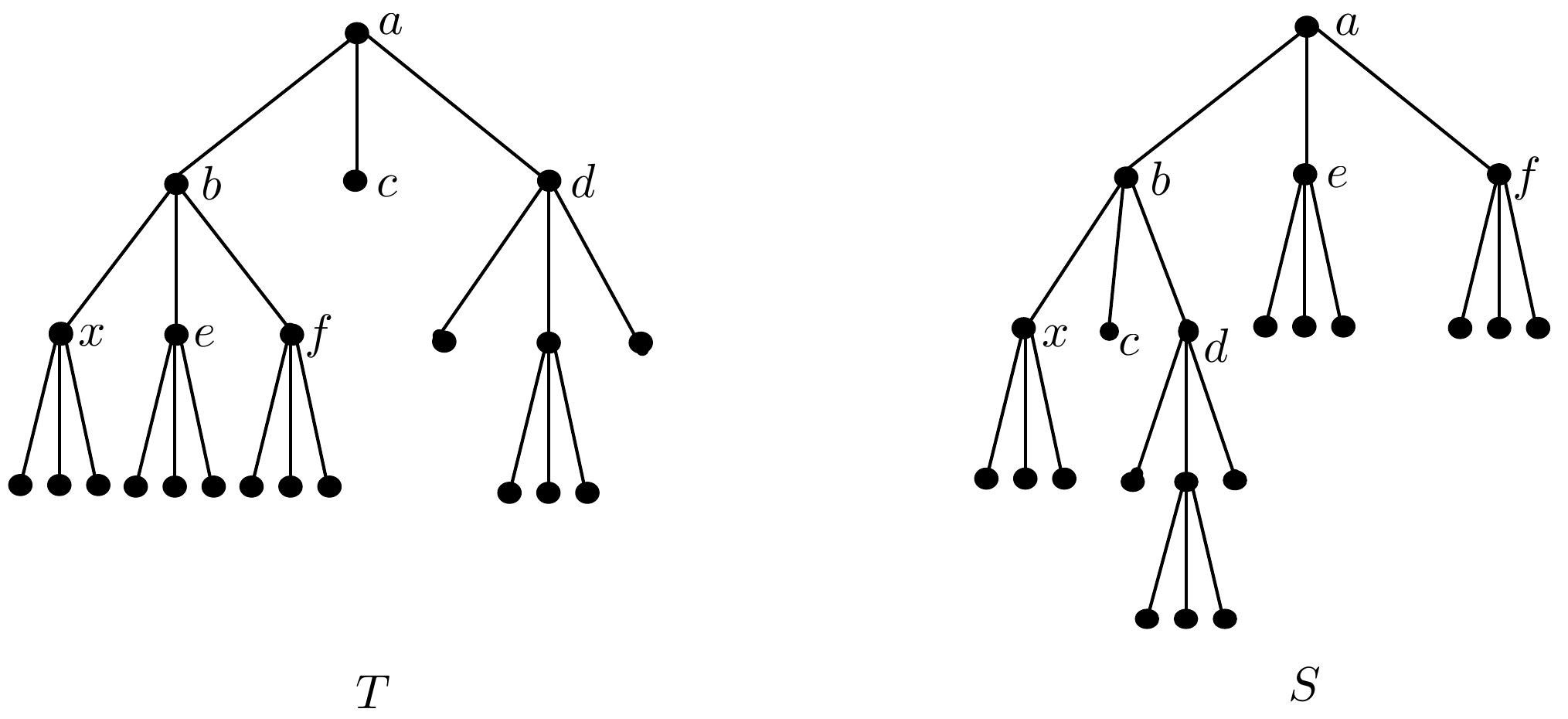} \end{center}
\caption{A tree  $S$ obtained from $T$ by applying generalized Jacobi relation (\ref{type1})  to $(T,w)$}
\label{nondecfig}
\end{figure}

The  lemmas above will enable us to reduce the 
 Stabilization Theorem (Theorem~\ref{stabth}) to the following key lemma.  For $T\in \mathbb T_{n,k}$, let $\rho_T$ denote the submodule of $\rho_{n,k}$ spanned by bracketed permutations of shape $T$. 
For any subset $\mathcal S \subseteq \mathbb T_{n,k}$, let $\rho_{\mathcal S}$ denote the submodule of $\rho_{n,k}$ spanned by bracketed permutations whose shapes are in $\mathcal S$.  For $T \in \mathbb T_{n,k}$, let $$D(T) =\{S \in \mathbb T_{n,k}: \delta(S) <_{lex}\delta(T)\}.$$

\begin{lemma}[Quotient Lemma] \label{quotlem}
Let  $T \in \mathbb T_{n,k}$ be an increasing tree for which each internal node has a child that's a leaf.  
   Then every irreducible contained in the  quotient module $\rho_T/(\rho_T \cap \rho_{D(T)})$ has exactly $k$ columns. 
\end{lemma}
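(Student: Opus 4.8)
The plan is to exhibit $\rho_T/(\rho_T\cap\rho_{D(T)})$ as a quotient of an induction product of Specht modules together with a linear operator of the kind studied in Section~\ref{specsec}, and then to control the columns of the surviving irreducibles via Lemma~\ref{indlem2}. The starting point is that $T$ is increasing with every internal node having a leaf-child; write the sequence of leaf-child counts of the internal nodes read from the root downward as a weakly increasing sequence $c_1\le c_2\le\cdots\le c_k$ (so $\mu_T$ of the deepest internal node is the largest). Because every internal node has a leaf-child, each $c_i\ge 1$, and the ``spine'' of $T$ is a single path of internal nodes (any branching would create an internal node whose subtree is too large, by the same counting as in Lemma~\ref{mindeglem}); hence $T$ looks like a comb in which the $i$-th internal node from the top carries $c_i$ leaves hanging off it, and there is one extra leaf at the bottom.

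First I would set up the ambient space: let $\tilde V_T$ be the span of all $(T,w)$ subject only to the antisymmetry relations. Antisymmetry within each ``bundle'' of leaves at a fixed internal node makes $\tilde V_T$ isomorphic, as an $\sg_{k(n-1)+1}$-module, to the induction product $\sgn_{c_1}\bullet\sgn_{c_2}\bullet\cdots\bullet\sgn_{c_k}\bullet\sgn_{1}$ of sign representations on the successive bundles (reading down the spine, plus the singleton leaf at the bottom). Next, the relations that cut $\tilde V_T$ down to $\rho_T$ modulo lower-shape terms are exactly the generalized Jacobi relations of type~(\ref{type1}) applied at each internal node $a$: by Lemma~\ref{depveclem}, since $T$ is increasing, \emph{every} Jacobi expansion at an internal node $a$ produces terms of shape $T$ again (when the exchanged child of the node below $a$ happens to be a leaf, keeping $\mu$ unchanged would require $\mu_T(a)\le\mu_T(b)$, which holds) together with terms of strictly smaller depth vector; so in the quotient $\rho_T/(\rho_T\cap\rho_{D(T)})$ these relations become relations purely among shape-$T$ permutations. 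The resulting relation set is precisely the image of an operator $\varphi_d$ of the form in Lemma~\ref{indlem2}: at each internal node $a$ sitting on the spine with the bundle below it, the Jacobi relation says $(\text{something})\cdot\mathrm{id}-\sum(i,j)$ annihilates the class, where the sum is over transpositions of an element of the bundle at $a$ with an element ``below'', and the scalar $d$ is forced to be the number of columns already assembled below, i.e.\ essentially $k$ at the innermost step.

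Then I would proceed by induction down the spine, peeling off one internal node at a time from the bottom. At the bottom the subtree is a single internal node with $c_k$ leaf-children and one extra leaf below: that piece already contributes a $2$-column (or, via the same mechanism, appropriately-many-column) Specht module. At each step up the spine, I invoke Lemma~\ref{indlem2} with $Y_1$ the module produced so far (all irreducibles with $j$ columns, where $j$ is the count accumulated), $Y_2=\sgn_{c_i}$, and $d=j$: the kernel of $\varphi_d$ is the span of the $S^{\lambda_1\conc\lambda_2}$ with $\lambda_1$ having exactly $d$ columns and compatible with $\lambda_2=1^{c_i}$. Concatenating a single column $1^{c_i}$ with a $d$-column shape $\lambda_1$ whose last column has length $\ge c_i$ (compatibility, which the increasing condition guarantees since deeper bundles are at least as large) yields a $(d+1)$-column shape. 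Iterating from the bottom of the spine to the top, the number of columns increments by exactly $1$ at each internal node, ending at $k$ after all $k$ internal nodes are processed — and the trailing singleton leaf doesn't add a column. Hence every irreducible in the quotient has exactly $k$ columns.

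The main obstacle I expect is the bookkeeping that identifies the quotient relations \emph{exactly} with $\im\varphi_d$ at each stage — in particular verifying that (a) the Jacobi relations at distinct spine nodes assemble into the single concatenated operator rather than interfering, so that the iterated kernel computation via Lemma~\ref{indlem2} is legitimate, and (b) the depth-vector argument of Lemma~\ref{depveclem} really does send \emph{all} non-shape-$T$ terms into $\rho_{D(T)}$ so that nothing of shape $T$ is spuriously lost or retained. A secondary subtlety is checking that the compatibility hypothesis of Lemma~\ref{indlem2}(2) ($\lambda_1$ has $d$ columns and is compatible with $\lambda_2=1^{c_i}$) holds at every stage; this is where the \emph{increasing} hypothesis on $T$ is used, since it forces the bundle sizes to be nondecreasing down the spine, which translates into the last column of the accumulated shape being long enough to sit next to the next single column. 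Once these identifications are nailed down, the column count follows mechanically from iterating Lemma~\ref{indlem2}.
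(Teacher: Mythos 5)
You assert that because every internal node has a leaf-child, ``the `spine' of $T$ is a single path of internal nodes (any branching would create an internal node whose subtree is too large, by the same counting as in Lemma~\ref{mindeglem})'', and you then build the whole argument on the premise that $T$ is a comb-like tree with a linear chain of internal nodes. This is not a consequence of the hypotheses. Lemma~\ref{mindeglem} forbids an internal node with \emph{no} leaf-children (because such a node's subtree would need at least $n+1\geq k+1$ internal nodes), but it places no obstruction on an internal node having two or more internal-node children alongside its leaf-children. For a concrete counterexample, take $n=4$, $k=3$: let the root have two internal-node children and two leaf-children, and let each internal-node child have four leaf-children. This tree lies in $\mathbb T_{4,3}$, is increasing ($\mu_T(\text{root})=2\leq 4$), and every internal node has a leaf-child; yet the pruned tree $\hat T$ of internal nodes is a branching ``V'', not a path. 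Once the spine can branch, the ``peel off one internal node at a time from the bottom'' iteration of Lemma~\ref{indlem2} breaks down: there is no single bottom-to-top chain along which to build up concatenations.

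This is exactly the obstruction the paper's proof is designed to overcome. The paper does not reduce to a path; instead it introduces two generalizations of the Specht module indexed by an arbitrary rooted tree and a $T$-partition, namely $S^{\hat T,\mu_T}$ (first kind, whose Garnir relations only swap with a node's parent) and $\tilde S^{\hat T,\mu_T}$ (second kind, whose Garnir relations swap with \emph{all} descendants and are realized as $\mathrm{im}\,\varphi_d$). It shows that $\rho_T/(\rho_T\cap\rho_{D(T)})$ is a submodule of $S^{\hat T,\mu_T}$ (Lemma~\ref{treetablem}, using the increasing hypothesis and the depth-vector comparison), that $\tilde S^{\hat T,\mu_T}$ has all irreducibles with exactly $|\hat T|=k$ columns (Theorem~\ref{col2th}, by applying Lemma~\ref{indlem2} recursively at the root over the induction product of the subtrees), and — the nontrivial step your proposal has no analogue of — that $S^{\hat T,\mu_T}$ embeds into $\tilde S^{\hat T,\mu_T}$ (Theorem~\ref{2kindth}). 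This last inclusion is where the branching case is genuinely harder, and it is handled by summing the second-kind Garnir data over the children of the root. A secondary issue in your writeup: you claim the relation set in the quotient ``is precisely the image of an operator $\varphi_d$''; the argument only needs, and only proves, that the Garnir relations land in the kernel, i.e.\ a submodule inclusion, which is what the paper establishes. I would encourage you to revisit the structure of the trees that can occur before committing to an iteration scheme; the path case is a useful warm-up (and does recover Fulton's and Theorem~\ref{newpresth}'s presentations) but is strictly special.
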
 

Section~\ref{trspecsec} is devoted to the  proof of the Quotient Lemma.  
We now use this lemma to prove the
following result, which by Theorem~\ref{decomposition_thm_2} is equivalent to the Stabilization Theorem.

\begin{theorem} \label{rhocolth} Let $n \ge k \ge 1$.   Each irreducible in $\rho_{n,k}$ has exactly $k$ columns.
\end{theorem}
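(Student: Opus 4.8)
The strategy is to induct on the lexicographic position of the depth vectors of the tree shapes appearing in $\rho_{n,k}$, using the Quotient Lemma as the engine. The point of Lemmas~\ref{mindeglem} and~\ref{depveclem} is that every bracketed permutation in $\rho_{n,k}$ can be rewritten, modulo bracketed permutations of strictly smaller depth vector, as a combination of bracketed permutations whose shapes are \emph{increasing} trees; and by Lemma~\ref{mindeglem}, since $n\ge k$, every such shape has the property that each internal node has a leaf-child. So I would first set up a filtration of $\rho_{n,k}$ indexed by the lexicographic order on depth vectors: for a depth vector $\delta$, let $\rho_{\le \delta}$ (resp.\ $\rho_{<\delta}$) be the submodule spanned by all bracketed permutations whose shape $S$ satisfies $\delta(S)\le_{lex}\delta$ (resp.\ $<_{lex}\delta$). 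These are genuinely $\sg_{k(n-1)+1}$-submodules because the $\sg_{k(n-1)+1}$-action only relabels leaves and hence preserves shapes, while the defining relations (generalized Jacobi) only produce shapes of equal-or-smaller depth vector, so passing to $\rho_{n,k}$ is consistent.

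\textbf{Key steps.} (1) Fix a depth vector $\delta$ that occurs as $\delta(T)$ for some $T\in\mathbb T_{n,k}$, and consider the subquotient $\rho_{\le\delta}/\rho_{<\delta}$. I claim this subquotient is spanned by (images of) bracketed permutations whose shape is an \emph{increasing} tree with depth vector exactly $\delta$: indeed any bracketed permutation of shape $S$ with $\delta(S)=\delta$ that is non-increasing is, by Lemma~\ref{depveclem}, a sum of bracketed permutations of strictly smaller depth vector, hence zero in the subquotient. (2) For each increasing $T$ with $\delta(T)=\delta$, the natural map $\rho_T\to\rho_{\le\delta}/\rho_{<\delta}$ has image the quotient $\rho_T/(\rho_T\cap\rho_{<\delta})$, and since $D(T)\subseteq\{S:\delta(S)<_{lex}\delta(T)\}$ with equality on the relevant spanning sets, $\rho_T\cap\rho_{<\delta}=\rho_T\cap\rho_{D(T)}$; so each such image is a quotient of $\rho_T/(\rho_T\cap\rho_{D(T)})$. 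By the Quotient Lemma (applicable precisely because $n\ge k$ guarantees, via Lemma~\ref{mindeglem}, that each internal node of $T$ has a leaf-child), every irreducible in that quotient has exactly $k$ columns. (3) The whole subquotient $\rho_{\le\delta}/\rho_{<\delta}$ is the sum of these images over increasing $T$ with $\delta(T)=\delta$, hence every irreducible constituent of it has exactly $k$ columns. (4) Running over all depth vectors $\delta$ and using that $\rho_{n,k}$ is built from the successive subquotients $\rho_{\le\delta}/\rho_{<\delta}$, every irreducible constituent of $\rho_{n,k}$ has exactly $k$ columns. (5) Finally invoke Proposition~\ref{lekprop}: no irreducible in $\rho_{n,k}$ has more than $k$ columns, so ``at most $k$'' combined with ``exactly $k$'' is automatic, and the statement ``each irreducible has exactly $k$ columns'' is precisely what we proved; conversely this yields $\rho_{n,k}\cong\beta_{n,k}$ by Theorem~\ref{decomposition_thm_2} since $\gamma_{n,k}$, whose irreducibles have at most $k-1$ columns, must vanish.

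\textbf{Main obstacle.} Everything here is bookkeeping \emph{given} the Quotient Lemma — the filtration is routine, and Lemmas~\ref{mindeglem} and~\ref{depveclem} do exactly the work of reducing to increasing trees with a leaf-child at every internal node. So the entire difficulty is concentrated in the Quotient Lemma itself, i.e.\ showing that $\rho_T/(\rho_T\cap\rho_{D(T)})$ has only $k$-column irreducibles for an increasing $T$ each of whose internal nodes has a leaf-child. I expect that is where the ``tree Specht modules'' of Section~\ref{trspecsec} enter: one should identify this quotient with (a quotient of) an induction product of Specht modules attached to the layers of $T$, analogous to the $k=3$ computation where $\tilde V_{n,3}/R$ was analyzed via $\sgn_2[\sgn_n]\bullet\sgn_{n-2}$ and Lemma~\ref{indlem}, and then show the column count is controlled — presumably by an iterated application of Lemma~\ref{indlem2}, peeling off one tree-layer at a time, with each application forcing a concatenation that adds columns up to the target of $k$. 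The delicate points will be (a) correctly describing the relations that survive in the quotient (the Garnir-type relations coming from generalized Jacobi applied within an increasing tree), and (b) checking the compatibility/column hypotheses of Lemma~\ref{indlem2} at each peel, which is exactly the role played by ``increasing'' and by ``every internal node has a leaf-child.''
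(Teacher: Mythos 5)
Your proposal is correct and takes essentially the same route as the paper: both reduce to the Quotient Lemma via Lemma~\ref{mindeglem} (so that $n\ge k$ gives a leaf-child at every internal node) and Lemma~\ref{depveclem} (to eliminate non-increasing shapes), inducting along the lexicographic order on depth vectors. The only cosmetic difference is that the paper uses the semisimple direct-sum decomposition $\rho_T \cong (\rho_T/(\rho_T\cap\rho_{D(T)}))\oplus(\rho_T\cap\rho_{D(T)})$ and inducts tree-by-tree, whereas you package the same information into a filtration $\rho_{<\delta}\subseteq\rho_{\le\delta}$ and examine subquotients; in characteristic $0$ these amount to the same thing.
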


\begin{proof}  
The result is equivalent to the following assertion:
For all $T \in  \mathbb T_{n,k}$, each irreducible  in $\rho_T$ has exactly $k$ columns.
We prove this assertion by induction on the depth vector $\delta(T)$, where the depth vectors are ordered lexicogaphically.  Since by Lemma~\ref{mindeglem}, every internal node of $T \in \mathbb T_{n,k}$ has a leaf child, we will be able to apply the Quotient Lemma (Lemma~\ref{quotlem}) to any increasing $T \in \mathbb T_{n,k}$.

 We start with the $\sg_{k(n-1)+1}$-module isomorphism 
\begin{equation} \label{isoeq} \rho_T \cong (\rho_T/(\rho_T \cap \rho_{D(T)}) )\oplus (\rho_T \cap \rho_{D(T)}).\end{equation}
The base case handles the trees with  minimum depth vector.  Let $T$ be such a tree.  Then $D(T)=\emptyset$ and by Lemma~\ref{depveclem},  $T$ is increasing.    The second summand of $(\ref{isoeq})$ vanishes and by applying the Quotient Lemma to the first summand, we get the desired result that each irreducible in $\rho_T$ has exactly $k$ columns. 

The induction step is handled with two cases.  Suppose $T$ does not have minimum depth vector.  Then $D(T) \ne \emptyset$.

 Case 1:  $T$ is increasing.  
By the Quotient Lemma (Lemma~\ref{quotlem}), all the irreducibles in the first  summand of (\ref{isoeq}) have exactly $k$ columns.  
Since $D(T) \ne \emptyset$, we can apply the induction hypothesis to the trees in $D(T)$, and conclude that all the irreducibles in  $\rho_{D(T)}$ have exactly $k$ columns, which implies that all the irreducibles in the second summand of (\ref{isoeq}) have exactly $k$ columns.

Case 2: $T$ is not increasing.  By Lemma~\ref{depveclem}, $\rho_T$ is a submodule of  a  sum $\sum_S \rho_S$, where all the trees $S$ have smaller depth vector.  By the induction hypothesis,  all the irreducibles of $\rho_S$ have exactly $k$ columns.  Hence the same is true for the sum and therefore for $\rho_T$. 
\end{proof}

\section{Proof of the Stabilization Theorem, Part 2: tree Specht modules} \label{trspecsec}
Recall that in the previous section we reduced the Stabilization Theorem to the Quotient Lemma.  In this section we prove the Quotient Lemma  by introducing two generalizations of the notion of Specht module. In addition to their application in the current work, we feel they might have some independent interest and be worthy of further study. 

\subsection{Tree Specht modules of the first kind} \label{trespecsubsec}

Let $T$ be an unlabeled  plane rooted tree and let $N$ be a positive integer.  (Note that we are not requiring $T$ to be an $n$-ary tree.)
A $T$-{\em partition} $\mu$ of  $N$ is a labeling of the nodes of $T$ with positive integers satisfying:
\begin{itemize}
\item $\sum_{a \in T} \mu(a) = N$
\item If $a$ is the parent of $b$ then $\mu(a) \le \mu(b)$.
\end{itemize} 
Note that if we view $T$ as the Hasse diagram of a poset $P$ in which the root  is the maximum element then the notion of $T$-partition of $N$ is the same as Stanley's $P$-partition of $N$; see \cite[Sec. 3.15.1]{St1}.  Note also that the $T$-partitions when $T$ is a path  (i.e., each internal node has exactly one child) are the same as ordinary partitions.

\begin{definition} \rm{Let $\mu$ be a $T$-partition of $N$.  A {\em $\mu$-tableau} $t$ is a filling of the nodes  of $T$ that satisfies
\begin{itemize}
\item each node $a \in T$ is filled with a column $Col(a)$ of distinct positive integers of length $\mu(a)$
\item the sets of entries of the columns are disjoint and their union is $[N]$
\end{itemize}
Let $\mathcal T_{T,\mu}$ denote the set of $\mu$-tableaux, where $\mu$ is a $T$-partition.}
\end{definition} 

 An example of a $T$-partition $\mu$  and a $\mu$-tableau $t \in \mathcal  T_{T,\mu}$ is given in Figure~\ref{treetabfig}.
In the example, we've oriented the tree from right to left with the root to the far right.  Note that top to bottom order matters in each column.  Note also that when $T$ is a path rooted at the right endpoint,   a $\mu$-tableau is an ordinary Young tableau of ordinary partition shape $\mu$.

\begin{figure}[h!]  \begin{center} \includegraphics[height=2.4in]{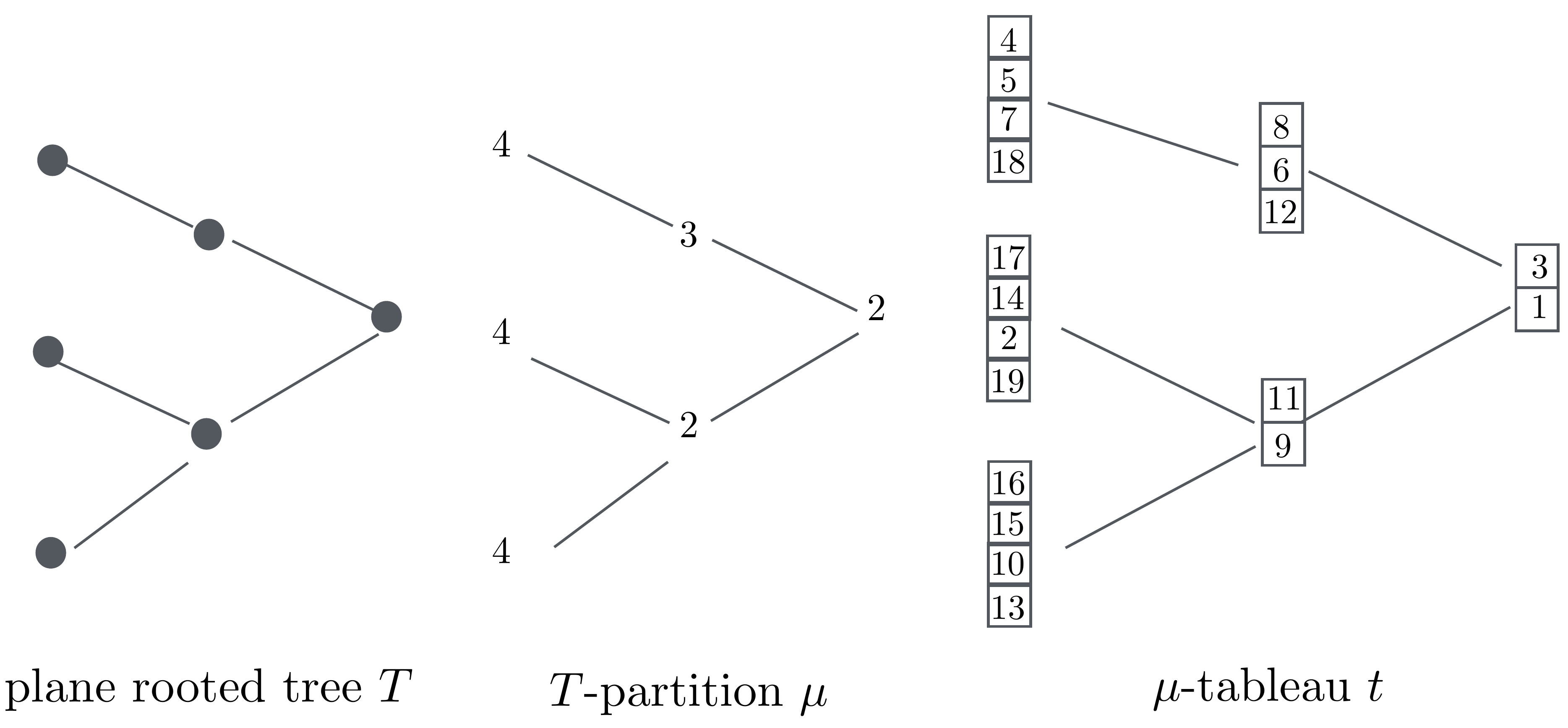} \end{center}
\caption{A tree partition and a tree  tableau.}
\label{treetabfig}
\end{figure}

 \begin{definition} \rm{Let $M^{T,\mu}$ be the vector space (over $\C$) generated by $\mathcal T_{T,\mu}$ subject to the {\em column relations}, which are of the form $t+s$, where $s \in \mathcal T_{T,\mu}$ is obtained from $t \in \mathcal T_{T,\mu}$ by switching two elements in the same column $Col(a)$ for any node $a \in T$.  Given $t \in \mathcal T_{T,\mu}$, let $\bar t$ denote the coset of $t$ in $M^{T,\mu}$.  We call these cosets  {\em  $\mu$-tabloids}.}
\end{definition}

A $\mu$-tableau is {\em column strict} if each column is increasing.  Clearly
$$\{\bar t : t \mbox{ is a column strict $\mu$-tableau}\}$$
is a basis for $M^{T,\mu}$.  
Note that $\sg_N$ permutes the entries  of the $\mu$-tableau $t \in \mathcal T_{T,\mu}$.  So $M^{T,\mu}$ is an $\sg_N$-module.  Note also that if we order the nodes $x_1,\dots,x_{|T|} $  of $T$ in some canonical way, say in preorder, we can view $\mu$ as a sequence $(\mu(x_1),\dots,\mu(x_{|T|})$ and $M^{T,\mu}$ as the induction product $$\sgn_{\mu(x_1)} \bullet \,\, \cdots \,\, \bullet \sgn_{\mu(x_{|T|})}.$$
This allows for a right action of $\sg_N$ as well as the left action described above.

\begin{definition}\label{gar1def} \rm{Let $\mu$ be a $T$-partition of $N$. The {\em tree Garnir relations of the first kind}  are defined by $$g^t_{a}:=
\bar t-\sum_s \bar s,$$ where  $a$ is a non-root node of $T$, $t \in  \mathcal T_{T,\mu}$, and the sum is over all $s\in \mathcal T_{T,\mu}$ obtained from $t\in \mathcal T_{T,\mu}$ by exchanging any
entry of  $Col(a)$  with the top entry of $Col(p(a))$, where $p(a)$ denotes the parent of $a$ in $T$. Let 
$G^{T,\mu}$ be the subspace of $M^{T,\mu}$ generated by the tree Garnir relations
$g^t_{a}$.  That is,
$$G^{T,\mu}:= \langle g_a^t : a \in T\setminus\{r\}, \, t\in \mathcal T_{T,\mu}\rangle, $$  where $r$ denotes the root of $T$.
The {\em tree Specht module of the first kind} $S^{T,\mu}$  is generated by the   $\mu$-tabloids  subject only to the tree Garnir relations of the first kind, i.e.,
$$S^{T,\mu} = M^{T,\mu}/ G^{T,\mu}.$$}
\end{definition}

Note that when $T$ is a path, the tree Garnir relations of the first kind are precisely the ordinary Garnir relations given in (\ref{gareq}) and the presentation defining the tree Specht module of the first kind is Fulton's  presentation for the ordinary Specht module given in Proposition~\ref{presentprop}.  Therefore if $T$ is a path then for all $T$-partitions~$\mu$,  
\begin{equation} \label{isoeq1} S^{T,\mu}\cong S^{\mu^*}, \end{equation} 
where  $\mu^*$ is the conjugate of $\mu$ viewed as an ordinary partition.  Note that the irreducible $S^{\mu^*}$ has exactly $|T|$ columns, where $|T|$ is the number of nodes of $T$.  We generalize this property  in the following result, whose proof is delayed to the next subsection.

\begin{theorem} \label{firstkindth} Let $\mu$ be a $T$-partition of $N$.  Then every irreducible in the $\sg_N$-module $S^{T,\mu}$ has exactly $|T|$ columns.
  \end{theorem}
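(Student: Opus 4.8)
The plan is to reduce the statement to Lemma~\ref{indlem2} by induction on the number of nodes of $T$, peeling off a leaf of $T$ (a node with no children — recall $T$ is rooted, so the leaves are the nodes farthest from the root in the Hasse-diagram sense, i.e.\ the minimal elements of the associated poset $P$). Concretely, fix a leaf $a$ of $T$ with parent $p(a)$, and let $T'$ be the tree obtained by deleting $a$. Given a $T$-partition $\mu$, restricting $\mu$ to $T'$ gives a $T'$-partition $\mu'$ of $N - \mu(a)$ (the inequality $\mu(p(a)) \le \mu(a)$ is no longer needed once $a$ is gone, and all remaining inequalities survive). Using the identification $M^{T,\mu} \cong \sgn_{\mu(x_1)} \bullet \cdots \bullet \sgn_{\mu(x_{|T|})}$ with the nodes ordered so that $a$ comes last, we have $M^{T,\mu} \cong M^{T',\mu'} \bullet \sgn_{\mu(a)}$, hence
\[
S^{T,\mu} \;\cong\; \bigl(S^{T',\mu'} \bullet \sgn_{\mu(a)}\bigr) \big/ \langle g^t_{a} : t \in \mathcal T_{T,\mu}\rangle,
\]
since the tree Garnir relations $g^t_b$ for $b \neq a$ are exactly the ones defining $S^{T',\mu'}$ (they never involve the column at $a$), and the only extra relations are the $g^t_a$, which exchange entries of $Col(a)$ with the top entry of $Col(p(a))$.

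The key point is then to recognize the span of $\{g^t_a : t\}$ as the image of the operator $\varphi_d$ from Lemma~\ref{indlem2}. Here $Y_1 = S^{T',\mu'}$, $Y_2 = \sgn_{\mu(a)} = S^{1^{\mu(a)}}$, and $d = \mu(p(a))$ (note that by the definition of $\varphi_d$ in Lemma~\ref{indlem2}, $g^t_a$ is literally $\mu(p(a))\,\mu(a)\,\bar t$ minus the sum of transposition-images pairing the $\mu(a)$ entries of $Col(a)$ with the $\mu(p(a))$ entries of $Col(p(a))$, summed appropriately — this matches $dn_2 I - \sum_{i,j}(i,j)$ restricted to the relevant blocks; one should double-check the scalar and that Fulton's relation exchanging with only the \emph{top} of $Col(p(a))$ generates the same subspace as exchanging with \emph{any} entry of $Col(p(a))$, which follows because $S^{T',\mu'}$ already has $Col(p(a))$ antisymmetrized). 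By the induction hypothesis every irreducible $S^{\lambda_1}$ of $Y_1$ has exactly $|T'| = |T|-1$ columns. For the operator $\varphi_d$ with $d = \mu(p(a))$: we need $d$ to be at least the number of columns of every irreducible in $Y_1$, i.e.\ $\mu(p(a)) \ge |T|-1$ — this is \emph{false} in general, so I would instead take $d$ to be \emph{any} integer $\ge |T|-1$ and observe that changing $d$ only shifts $\varphi_d$ by a scalar multiple of the identity on a complement of the kernel; more carefully, one wants the Garnir subspace to equal $\operatorname{im}\varphi_{d_0}$ for the \emph{specific} value $d_0 = \mu(p(a))$, and then Lemma~\ref{indlem2} tells us $S^{T,\mu} \cong \ker \varphi_{d_0}$ whose irreducibles are exactly those $\lambda_1 \conc 1^{\mu(a)}$ with $S^{\lambda_1}$ in $Y_1$, $\lambda_1$ having $d_0$ columns and compatible with $1^{\mu(a)}$. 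Adding a column $1^{\mu(a)}$ increases the column count by one, giving $|T|$ columns — \emph{provided} $\lambda_1$ genuinely has $d_0$ columns.

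I expect the main obstacle to be precisely this last compatibility issue: the inductive hypothesis only says irreducibles of $S^{T',\mu'}$ have $|T|-1$ columns, but $\mu(p(a))$ could be smaller than $|T|-1$, so a priori Lemma~\ref{indlem2} with $d = \mu(p(a))$ would force the kernel to be $0$ and collapse everything. The resolution must be that the relevant structure guarantees the columns at and above $p(a)$ are ``long enough'': one should strengthen the inductive statement to track not just the number of columns but which ones are forced to have a prescribed length — essentially, that the irreducible $S^{\lambda_1}$ appearing in $S^{T',\mu'}$ always has its last column (the one corresponding to $p(a)$'s subtree) of length dictated by the $T'$-partition. A cleaner route, which I would pursue, is to choose the leaf $a$ so that $p(a)$ is a node all of whose other children are leaves too — such a node exists (take any deepest internal node), peel off \emph{all} the leaf-children of $p(a)$ at once as $\sgn$ factors, then peel $p(a)$ itself, and argue by a stronger induction that records the column-length profile of the Specht modules along the ``spine'' from the current node to the root. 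The bookkeeping for this strengthened hypothesis, and verifying the scalar $d_0 = \mu(p(a))$ really does make the Garnir subspace coincide with $\operatorname{im}\varphi_{d_0}$ rather than a proper sub- or superspace, is where the real work lies.
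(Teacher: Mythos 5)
The instinct to invoke Lemma~\ref{indlem2} is right, and you correctly sense that something is off, but the proposal has a genuine gap that your final paragraph does not resolve.  The operator $\varphi_d$ in Lemma~\ref{indlem2} is defined by summing the transpositions $(i,j)$ over \emph{all} $i$ in the $Y_1$-block and all $j$ in the $Y_2$-block; it is a global operator.  The first-kind tree Garnir relation $g^t_a$, by contrast, only involves exchanges between $Col(a)$ and $Col(p(a))$.  You note this yourself when you write that it ``matches $dn_2 I - \sum_{i,j}(i,j)$ restricted to the relevant blocks,'' but Lemma~\ref{indlem2} says nothing about a block-restricted version of $\varphi_d$, and there is no value of $d$ for which $\langle g^t_a : t\rangle$ is obviously $\operatorname{im}\varphi_d$: the natural scalar you can manufacture by summing over positions of $Col(p(a))$ is $\mu(p(a))\mu(a)$, with transpositions confined to $Col(p(a))$, whereas Lemma~\ref{indlem2} needs the scalar $(|T|-1)\mu(a)$ with transpositions ranging over all of $T'$.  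When $\mu(p(a))<|T|-1$, which happens routinely, the hypothesis of Lemma~\ref{indlem2} fails outright, exactly as you worry.  Strengthening the induction to record column lengths does not repair this; the mismatch is between a local relation and a global operator, not a failure of bookkeeping.

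The paper's resolution is different in kind and is the entire content of Section~\ref{trspecsec}: it introduces a \emph{second} generalization $\tilde S^{T,\mu}$, whose Garnir relation $\tilde g^t_a$ exchanges $Col(a)$ with \emph{every} descendant column and carries scalar $|D_a|\mu(a)$.  When $a$ is the root this is exactly $\varphi_{|T|-1}^{Y,\sgn_{\mu(r)}}$, so peeling off the \emph{root} (not a leaf) makes Lemma~\ref{indlem2} apply cleanly and yields Theorem~\ref{col2th} for $\tilde S^{T,\mu}$.  The theorem you are proving then follows only after proving the nontrivial inclusion $S^{T,\mu}\subsetsim \tilde S^{T,\mu}$ (Theorem~\ref{2kindth}), which requires an induction with two cases (root with one child versus several), repeated use of Fulton's presentation and the new presentation (\ref{newgareq}) on the path from each leaf to the root, and a careful comparison of the two Garnir subspaces.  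This inclusion is precisely the step your proposal is implicitly assuming when it treats the local relation $g^t_a$ as if it were the global one; without it, the argument does not close.
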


We now describe the relationship between tree Specht modules and the main subject of this paper, namely $\rho_{n,k}$.  Let $n,k \ge 1$.   Given an increasing tree $T \in \mathbb T_{n,k}$  for which each internal node has a child that's a leaf, we prune $T$ by removing all its leaves to get the plane rooted tree $\hat T$.  The nodes  of $\hat T$ are the  internal nodes of $T$. For each such node $a$, recall that $\mu_T(a)$ denotes the number of leaf-children of $a$ in $T$.   Since each internal node of $T$ has a leaf-child, $\mu_T(a) \ge 1$ for all nodes $a$ of $\hat T$.
Since $T$ is increasing, $\mu_T$ is a $\hat T$-partition of $k(n-1)+1$.  
We now associate with each $\mu_T$-tableau $t \in \mathcal T_{\hat T,\mu_T}$, a  leaf labeling $w_t$ of $T$ defined by labeling the $i$th leaf-child  of $a$ (from left to right), where $a$ is an internal node of $T$ and $i \in [\mu_T(a)]$,  with the $i$th entry of $Col(a)$  of $t$ (from top down); see Figure~\ref{tabtolabfig}.  

\begin{figure}[h!]  \begin{center} \includegraphics[height=2.4in]{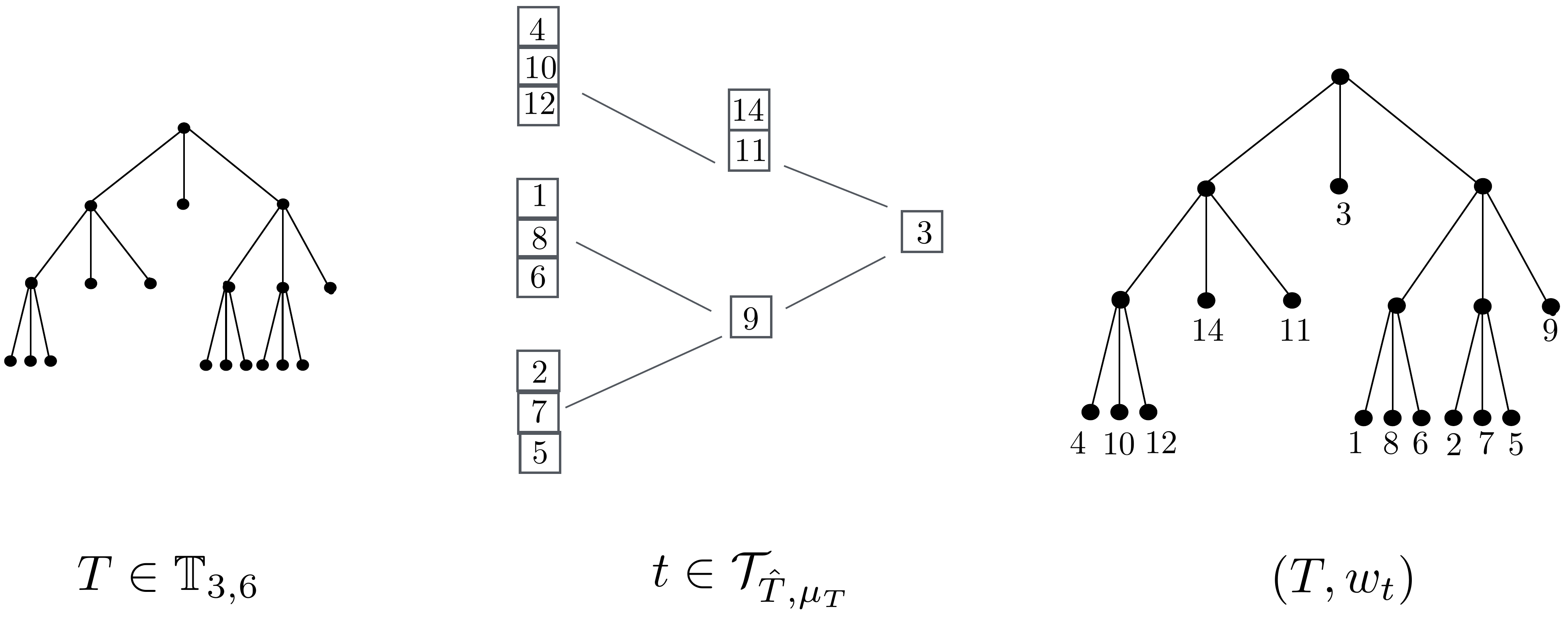} \end{center}
\caption{Leaf labeled tree $(T,w_t)$ associated with $\mu_T$-tableau $t$.}
\label{tabtolabfig}
\end{figure}

\begin{lemma} \label{treetablem} Let  $T \in \mathbb T_{n,k}$ be an increasing tree for which each internal node has a child that's a leaf.  Then the $\sg_{k(n-1)+1}$-module
$\rho_T/ (\rho_T \cap \rho_{D(T)})$ is isomorphic to a submodule of  $S^{\hat T, \mu_T}$.
\end{lemma}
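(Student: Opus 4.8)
\textbf{Proof proposal for Lemma~\ref{treetablem}.}

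The plan is to construct an explicit surjective $\sg_{k(n-1)+1}$-module homomorphism $\Psi$ from $S^{\hat T,\mu_T}$ onto the quotient $\rho_T/(\rho_T\cap\rho_{D(T)})$, and then to deduce the lemma by taking adjoints (equivalently, by passing to the dual, since over a field of characteristic $0$ every module is self-dual and a surjection $S^{\hat T,\mu_T}\twoheadrightarrow Q$ dualizes to an injection $Q\hookrightarrow S^{\hat T,\mu_T}$). First I would define $\Psi$ on the generating set of $M^{\hat T,\mu_T}$: send a $\mu_T$-tabloid $\bar t$ to the class of the bracketed permutation $[(T,w_t)]$ in $\rho_T/(\rho_T\cap\rho_{D(T)})$, where $w_t$ is the leaf labeling of $T$ associated with $t$ as in Figure~\ref{tabtolabfig}. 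Since the leaf labelings $w_t$, as $t$ ranges over $\mathcal T_{\hat T,\mu_T}$, realize (up to sign, coming from the antisymmetry of the brackets) all leaf labelings of $T$, the image of $\Psi$ contains all bracketed permutations of shape $T$, so $\Psi$ is surjective onto $\rho_T/(\rho_T\cap\rho_{D(T)})$ as soon as it is well defined. Equivariance under the left $\sg_{k(n-1)+1}$-action is immediate because relabeling the entries of $t$ by $\sigma$ relabels the leaves of $(T,w_t)$ by $\sigma$.

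The heart of the argument is checking that $\Psi$ is well defined, i.e., that it kills the column relations and the tree Garnir relations of the first kind. The column relations are easy: swapping two entries of $Col(a)$ in $t$ swaps the labels of two leaf-children of the internal node $a$ in $(T,w_t)$, and by the antisymmetry relation~(\ref{antsym}) applied at the internal node $a$ this changes $[(T,w_t)]$ by a sign, matching the relation $t+s\mapsto 0$. For the tree Garnir relation $g^t_a = \bar t - \sum_s \bar s$ (where $a$ is a non-root node of $\hat T$, i.e., a non-root internal node of $T$, and the $s$ are obtained by exchanging an entry of $Col(a)$ with the top entry of $Col(p(a))$), I would apply the generalized Jacobi relation~(\ref{type1}) to the leaf-labeled subtree of $(T,w_t)$ rooted at the node $p(a)$, exactly in the form used in the proof of Lemma~\ref{depveclem}. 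The key observation is that $a$ is a child of $p(a)$ that is itself an internal node, while $p(a)$ has at least one leaf-child (by hypothesis); applying~(\ref{type1}) "through" the edge from $p(a)$ to $a$ expresses $[(T,w_t)]$ as a sum of terms. Among these terms, the ones in which a leaf of $p(a)$ gets inserted into the bracket at $a$ produce precisely the bracketed permutations $[(T,w_s)]$ of shape $T$ occurring in $\sum_s \bar s$ — this is where I would match the combinatorics of "exchange an entry of $Col(a)$ with the top entry of $Col(p(a))$" against the insertion of a leaf-child of $p(a)$ as a new leaf-child of $a$. The remaining terms of~(\ref{type1}) have a shape $S$ with $\mu_S(p(a)) < \mu_T(p(a))$, hence $\delta(S) <_{lex} \delta(T)$ by the computation in the proof of Lemma~\ref{depveclem}, so they lie in $\rho_{D(T)}$ and die in the quotient. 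Thus in $\rho_T/(\rho_T\cap\rho_{D(T)})$ the image of $g^t_a$ is zero, so $\Psi$ descends to $S^{\hat T,\mu_T}$.

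The main obstacle I anticipate is the careful bookkeeping in this last step: one must verify that applying~(\ref{type1}) at $p(a)$ in the direction of the edge to $a$ genuinely produces each summand $[(T,w_s)]$ with the correct sign and multiplicity one, and that no shape-$T$ terms other than those in $\sum_s\bar s$ survive — in particular one must use that $p(a)$ has a leaf-child (so that "the top entry of $Col(p(a))$" exists and is the natural thing to move) and that $a$ is increasing below $p(a)$ (so that $\mu_T$ really is a $\hat T$-partition and the counts line up). A secondary, more routine point is the passage from "surjection" to "submodule": I would phrase $S^{\hat T,\mu_T}$ and $\rho_T$ via induction products of sign representations (as the excerpt notes, $M^{T,\mu}\cong\sgn_{\mu(x_1)}\bullet\cdots\bullet\sgn_{\mu(x_{|T|})}$) so that both carry a natural nondegenerate invariant bilinear form, and then observe that a surjection of self-dual semisimple modules splits, giving $\rho_T/(\rho_T\cap\rho_{D(T)})$ as a direct summand, hence a submodule, of $S^{\hat T,\mu_T}$. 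Combined with Theorem~\ref{firstkindth} (every irreducible in $S^{\hat T,\mu_T}$ has exactly $|\hat T| = k$ columns), this lemma is exactly what is needed to finish the Quotient Lemma.
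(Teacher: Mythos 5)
Your overall strategy matches the paper's: define a map $M^{\hat T,\mu_T}\to\rho_T/(\rho_T\cap\rho_{D(T)})$ on tabloids by $\bar t\mapsto[(T,w_t)]$, check that it kills the column relations (via antisymmetry) and the tree Garnir relations (via a generalized Jacobi relation applied at $p(a)$), and conclude by complete reducibility. But there is a genuine gap in the Garnir step: you invoke relation~(\ref{type1}) ``in the form used in Lemma~\ref{depveclem},'' and that is the wrong relation here.

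Applying~(\ref{type1}) at $p(a)$ with the inner bracket at $a$ writes $[(T,w_t)]$ as $\sum_i [x_1,\dots,x_{i-1},[x_i,x_{n+1},\dots,x_{2n-1}],x_{i+1},\dots,x_n]$, where $x_1,\dots,x_n$ are the children of $a$ and $x_{n+1},\dots,x_{2n-1}$ are the other children of $p(a)$. Each right-hand term bundles one child $x_i$ of $a$ together with \emph{all} $n-1$ siblings of $a$ into a new bracket, while the remaining $n-1$ children of $a$ move up to become children of $p(a)$. This does not swap a single leaf of $a$ with a single leaf of $p(a)$, so these are not the Garnir summands $[(T,w_s)]$, and in general none of them has shape $T$. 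Worse, the resulting shape has $\mu_S(p(a))$ equal to $\mu_T(a)$ or $\mu_T(a)-1$; since $T$ is increasing we have $\mu_T(p(a))\le\mu_T(a)$, so $\mu_S(p(a))$ can equal or exceed $\mu_T(p(a))$, and the depth vector need not drop lexicographically. Thus the claims ``the shape-$T$ terms are exactly those in $\sum_s\bar s$'' and ``the remaining terms lie in $\rho_{D(T)}$'' both fail.

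What is needed is the \emph{alternative} generalized Jacobi relation~(\ref{filipeq}), which the paper uses precisely at this point: $[[x_1,\dots,x_n],y_1,\dots,y_{n-1}]=\sum_i[[x_1,\dots,x_{i-1},y_1,x_{i+1},\dots,x_n],x_i,y_2,\dots,y_{n-1}]$. Taking $[x_1,\dots,x_n]$ to be the bracket at $a$ and $y_1$ to be the leftmost leaf-child $b$ of $p(a)$ (which exists by your hypothesis on $T$), each right-hand term swaps exactly one $x_i$ with the single leaf $y_1$. When $x_i$ is a leaf-child of $a$ the shape stays $T$ and you recover exactly the Garnir summands; when $x_i$ is an internal child of $a$, the number of leaves at the depth of $b$ drops by one and nothing shallower changes, so $\delta(S)<_{lex}\delta(T)$ and the term lies in $\rho_{D(T)}$. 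Once you replace~(\ref{type1}) by~(\ref{filipeq}) in this step, your argument goes through; the map, the column-relation check, and the passage from a surjection of $S^{\hat T,\mu_T}$ onto the quotient to a submodule embedding (by semisimplicity/self-duality) are all correct and agree with the paper.
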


\begin{proof}   Define the linear map $\varphi: M^{\hat T, \mu_T} \to \rho_T$, which takes the  $\mu_T$-tabloid $\bar t$ to the  bracketed permutation  $[(T,w_t)]$. 
 It follows from the antisymmetry of the bracket that this is a well defined $\sg_{k(n-1)+1}$-module  homomophism. Note also that the map is surjective.

Now consider the composition  $$\pi \circ \varphi: M^{\hat T, \mu_T} \to \rho_T/ (\rho_T \cap \rho_{D(T)}),$$ where   $\pi:\rho_T \to \rho_T/ (\rho_T \cap \rho_{D(T)})$ is the projection map.   We will show that the tree Garnir relations $g_a^t$, where $a$ is a nonroot node of $ \hat T$ and $t \in \mathcal T_{\hat T, \mu_T}$, satisfy
 \begin{equation} \label{treegareq}  g_a^t \in \ker (\pi \circ \varphi).\end{equation} 
 
  Indeed, first note that
$$\varphi(g_a^t) = \varphi(\bar t-\sum_s \bar s) = \varphi(\bar t)-\sum_s \varphi(\bar s) 
= [(T,w_t)] - \sum_u [(T,u)],$$ where  the summation index $s$ is as in Definition~\ref{gar1def} and the final sum is over all $u$ obtained from  $w_t$  by switching the label of a leaf child of $a$ with the label of the left most leaf    child $b$ of the  parent  $p(a)$ of $a$.   We claim that the alternative generalized Jacobi relation (\ref{filipeq}) applied to the subtree of $T$ rooted at  $p(a)$ yields  

\begin{equation} \label{inDeq}  [(T,w_t)]  -\sum_u [(T,u)] \in \rho_{D(T)} .\end{equation}
Indeed, the bracketed permutations on the right hand side of (\ref{filipeq}) that aren't one of the $[(T,u)] $'s have shapes $S$ that can be obtained from $T$ by switching the leaf $b$ with the subtree rooted at any child $c$ of $a$ that's an internal node; see Figure~\ref{derivfig}. Clearly, the number of leaves at  the depth of $b$ (or $a$) in $T$ is reduced by this switch, while nothing higher up in the tree changes.  Hence $\delta(S) <_{lex} \delta(T)$, which implies (\ref{inDeq}).  

Since $[(T,w_t)] - \sum_u [(T,u)]$ is clearly also in $\rho_T$, we have $\varphi(g_a^t) \in \rho_T \cap \rho_{D(T)}$, which means that  (\ref{treegareq}) holds.
Hence 
$$G^{\hat T,\mu_T} \subseteq \ker (\pi \circ \varphi).$$ Since $\rho_T/ (\rho_T \cap \rho_{D(T)})$ is the image of $\pi \circ \varphi$, it follows  that $\rho_T/ (\rho_T \cap \rho_{D(T)})$  is isomorphic to a submodule of $M^{\hat T,\mu_T}/G^{\hat T,\mu_T} = S^{\hat T,\mu_T}$.
\end{proof}

\begin{figure}[h!]  \begin{center} \includegraphics[height=1.8in]{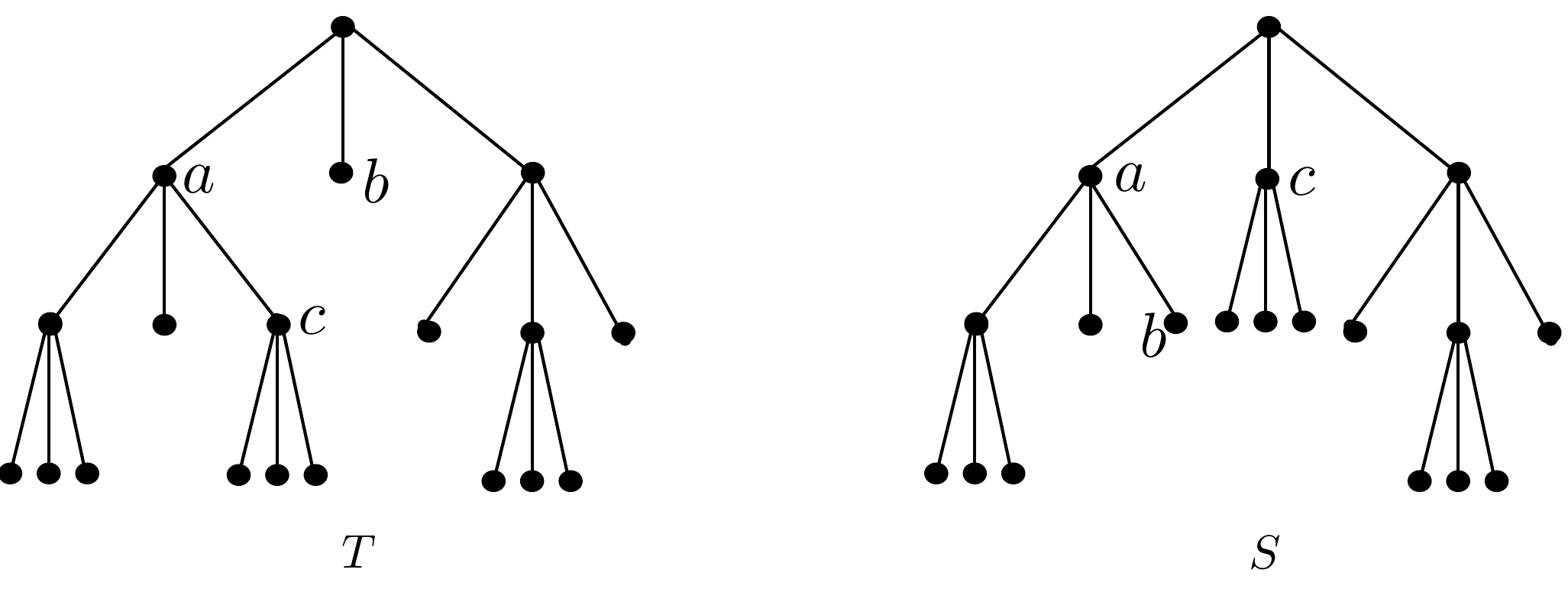} \end{center}
\caption{}
\label{derivfig}
\end{figure}

 \begin{remark} Lemma~\ref{treetablem} applied to the  comb tree $C_{n,k}$ given in Example~\ref{combex} asserts that $$\rho_{n,k}/\tilde \rho_{n,k} \subsetsim S^{k^{n-1}1}$$ since $\tilde \rho_{n,k} = \rho_{D(C_{n,k})}$ and by Proposition~\ref{coneprop},  $\rho_{n,k} = \rho_{C(n,k)} $.  By Theorem~\ref{noncombth}(2), this containment is in fact an isomorphism.
 \end{remark}
 
 We now use Theorem~\ref{firstkindth}, which will be proved in the next subsection, to prove the Quotient Lemma (Lemma~\ref{quotlem}).
 \begin{proof}[Proof of Lemma~\ref{quotlem}] It  follows from Lemma~\ref{treetablem}  that   each irreducible of $\rho_T/ (\rho_T \cap \rho_{D(T)})$ is an irreducible of $S^{\hat T, \mu_T}$.  By Theorem~\ref{firstkindth}, which we  have yet to prove, each irreducible of $S^{\hat T, \mu_T}$ has exactly $|\hat T|$ columns.  Since $|\hat T|$ is the number of internal nodes of $T$, which is $k$,  each irreducible of $\rho_T/ (\rho_T \cap \rho_{D(T)})$ has exactly $k$ columns.    
 \end{proof}

\subsection{Tree Specht modules of the second kind}  Our goal in this subsection is to prove Theorem~\ref{firstkindth} which states that all irreducibles in $S^{T,\mu}$ have exactly $|T|$ columns.  We carry this out by introducing a second generalization $\tilde S^{T,\mu}$ of Specht module, and showing that $S^{T,\mu}$ is isomorphic to a submodule of $\tilde S^{T,\mu}$ and that  all the irreducibles in $\tilde S^{T,\mu}$ have exactly $|T|$ columns.  

Our second generalization of Specht module differs from the first only in the Garnir relations.

\begin{definition} \rm{Let $\mu$ be a $T$-partition of $N$. The {\em tree Garnir relations of the second kind}  are defined by $$\tilde g^t_{a}:=
|D_a|  \mu(a) \bar t-\sum_s \bar s,$$ where    $t \in  \mathcal T_{T,\mu}$, $a$ is an internal node of $T$, the set $D_a$ consists of the  descendants of $a$, and the sum is over all $s\in \mathcal T_{T,\mu}$ obtained from $t\in \mathcal T_{T,\mu}$ by exchanging any
entry of  $Col(a)$  with any entry of $\bigcup_{b \in D_a} Col(b)$. Let $\tilde G^{T,\mu}$ be the subspace of $M^{T,\mu}$ generated by the tree Garnir relations
$\tilde g^t_{a}$. That is,
$$\tilde G^{T,\mu}:= \langle \tilde g_a^t : a \mbox{ an internal node of $T$, }  t\in \mathcal T_{T,\mu}\rangle. $$
The {\em tree Specht module of the second kind} $\tilde S^{T,\mu}$  is generated by the   $\mu$-tabloids  subject only to the tree Garnir relations of the second kind , i.e.,
$$\tilde S^{T,\mu} = M^{T,\mu}/ \tilde G^{T,\mu}.$$}
\end{definition}

Note that when $T$ is a path, the tree Garnir relations of the second kind are precisely the  Garnir relations given in (\ref{newgareq}) and the presentation defining the tree Specht module of the second kind is the presentation for the ordinary Specht module given in Theorem~\ref{newpresth}.  More precisely if $T$ is a path, then for all $T$-partitions $\mu$
\begin{equation} \label{iso2eq} \tilde S^{T,\mu}  \cong S^{\mu*},\end{equation}
where $\mu^*$ is the conjugate of $\mu$ viewed as an ordinary partition.

\begin{theorem} \label{col2th} Let $\mu$ be a $T$-partition of $N$.  Then every irreducible in the $\sg_N$-module $\tilde S^{T,\mu}$ has exactly $|T|$ columns and the last column has length $\mu(r)$ where $r$ is the root of $T$. 
\end{theorem}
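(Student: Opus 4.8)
The plan is to realize $\tilde S^{T,\mu}$ as a quotient of the kernel of one of the operators to which Lemma~\ref{indlem2} applies, and then read off the column data directly from that lemma. First dispose of the trivial case $|T|=1$: then $T$ has no internal node, so there are no tree Garnir relations of the second kind and $\tilde S^{T,\mu}=M^{T,\mu}=\sgn_N=S^{1^N}$, which is a single column of length $N=\mu(r)$. So assume $|T|\ge 2$, whence the root $r$ is an internal node. The subspace $\tilde G^{T,\mu}$ contains the submodule $R_r:=\langle \tilde g^t_r : t\in\mathcal T_{T,\mu}\rangle$ generated by the Garnir relations of the second kind attached to $r$ alone, so $\tilde S^{T,\mu}$ is a quotient of $M^{T,\mu}/R_r$; it therefore suffices to show that every irreducible of $M^{T,\mu}/R_r$ has exactly $|T|$ columns, with last column of length $\mu(r)$.

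The central step is to identify $R_r$ with the image of one of the operators $\varphi_d$ of Lemma~\ref{indlem2}. List the non-root nodes of $T$ as $b_1,\dots,b_{|T|-1}$ and order the nodes of $T$ so that $r$ comes last; this presents $M^{T,\mu}$ as $Y_1\bullet Y_2$, where $Y_1:=\sgn_{\mu(b_1)}\bullet\cdots\bullet\sgn_{\mu(b_{|T|-1})}$ and $Y_2:=\sgn_{\mu(r)}$, so $n_2=\mu(r)$. Put $d:=|T|-1=|D_r|$. Because $Y_1$ is an induction product of $|T|-1$ single-column sign representations, Pieri's rule shows that every irreducible in $Y_1$ has at most $|T|-1=d$ columns, so Lemma~\ref{indlem2} may be applied to $\varphi_d=\varphi^{Y_1,Y_2}_d$. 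Now compare formulas: $\varphi_d(\bar t)=d\,\mu(r)\,\bar t-\sum_s\bar s$, where the $(N-\mu(r))\mu(r)$ summands run over the tableaux $s$ obtained from $t$ by exchanging an entry of $Col(r)$ with a non-$Col(r)$ entry; since $D_r$ consists of all non-root nodes, $\bigcup_{b\in D_r}Col(b)$ is precisely the set of non-$Col(r)$ entries and $|D_r|\,\mu(r)=d\,\mu(r)$, so $\varphi_d(\bar t)=\tilde g^t_r$. Hence $R_r=\im\varphi_d$.

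To finish, invoke Lemma~\ref{indlem2} with $Y_2=\sgn_{\mu(r)}=S^{1^{\mu(r)}}$: every irreducible of $\ker\varphi_d$ has the form $S^{\lambda_1\conc 1^{\mu(r)}}$ with $S^{\lambda_1}$ an irreducible of $Y_1$ that has exactly $d=|T|-1$ columns and whose last column has length $\ge\mu(r)$ (the compatibility condition for concatenation with $1^{\mu(r)}$). Any such $\lambda_1\conc 1^{\mu(r)}$ has exactly $(|T|-1)+1=|T|$ columns and its last column, being the appended $1^{\mu(r)}$, has length $\mu(r)$. Finally, since $M^{T,\mu}$ is semisimple and $\varphi_d$ is an $\sg_N$-endomorphism, $\varphi_d$ preserves each isotypic component, and rank--nullity there yields that the multiplicity of $S^\lambda$ in $M^{T,\mu}/R_r=M^{T,\mu}/\im\varphi_d$ equals its multiplicity in $\ker\varphi_d$ for every $\lambda$. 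Thus every irreducible of $M^{T,\mu}/R_r$ — and so, being a quotient, every irreducible of $\tilde S^{T,\mu}$ — has exactly $|T|$ columns with last column of length $\mu(r)$.

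The step I expect to demand the most care is the identification $R_r=\im\varphi_d$ in the second paragraph: one has to match the ``right action'' of the transpositions in the definition of $\varphi_d$ (Lemma~\ref{indlem2}), taken with $Col(r)$ sitting in the second tensor factor of $M^{T,\mu}\cong Y_1\bullet Y_2$, with the entry-exchange operation defining $\tilde g^t_r$, and to check that the scalar $d\,n_2$ coincides with the coefficient $|D_r|\mu(r)$ of $\bar t$ in $\tilde g^t_r$. The remaining ingredients — the Pieri bound on the columns of $Y_1$, and the observation that quotienting can only remove irreducible constituents — are routine.
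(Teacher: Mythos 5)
Your proof is correct, and it takes a route that is close in spirit to the paper's but not identical. The paper first quotients $M^{T,\mu}$ by all the non-root Garnir relations of the second kind, which yields $(\tilde S^{T_1,\mu_1}\bullet\cdots\bullet\tilde S^{T_m,\mu_m})\bullet\sgn_{\mu(r)}$ where $T_1,\dots,T_m$ are the subtrees rooted at the children of $r$; it then identifies the remaining relation $\langle\tilde g^t_r\rangle$ with $\im\varphi_{|T|-1}$ on that space and invokes Lemma~\ref{indlem2}, obtaining an \emph{exact} (iff) description of the irreducibles in $\tilde S^{T,\mu}$ as the $S^{\lambda\conc 1^{\mu(r)}}$ with $\lambda$ an irreducible of the first factor having exactly $|T|-1$ columns. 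You instead impose only the root relation, work directly with $M^{T,\mu}=Y_1\bullet\sgn_{\mu(r)}$ where $Y_1$ is the raw induction product of sign modules over the non-root nodes, identify $R_r=\im\varphi_{|T|-1}$ there, conclude $M^{T,\mu}/R_r\cong\ker\varphi_{|T|-1}$ by semisimplicity and rank--nullity on isotypic components, and then observe that $\tilde S^{T,\mu}$ is a further quotient. Both arguments hinge on the same Lemma~\ref{indlem2} with $d=|T|-1$ and on the same Pieri bound for the column count of the first factor; the paper's choice of quotient gives a sharper recursive characterization of $\tilde S^{T,\mu}$, while yours gets by with a containment — which is all the theorem's statement requires — and so avoids factoring through the subtree Specht modules. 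One small streamlining of your write-up: the explicit $M/\im\varphi\cong\ker\varphi$ step you flag is the same step the paper uses silently when passing from $(Y\bullet\sgn_{\mu(r)})/\langle\tilde g^t_r\rangle$ to $\ker\varphi_{|T|-1}$, so your care there is warranted but not an extra burden relative to the paper's argument.
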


\begin{proof} The result is obvious in  the case where $|T|=1$.  So assume $|T| > 1$.   Let $T_1,T_2, \dots,T_m$ be the subtrees rooted at the children of the root $r$ indexed from left to right.  The $T$-partition  $\mu$ restricted to $T_i$ defines a $T_i$-partition $\mu_i$. 
Let $N_i = \sum_{a \in T_i} \mu_i(a)$.  Clearly 
$ \sum_{i=1}^m N_i = N - \mu(r) $.

Let $Y$ be the  $\sg_{N-\mu(r)}$-module
$$Y= \tilde S^{T_1,\mu_1} \bullet \, \cdots \, \bullet \tilde S^{T_m,\mu_m}.$$  
Then $$\tilde S^{T,\mu} \cong (Y \bullet \sgn_{\mu(r)}) / \langle \tilde g_r^t : t \in \mathcal T_{T,\mu}\rangle.$$

Since  $\tilde S^{T_i,\mu_i} $ is isomorphic to a submodule of $M^{T_i,\mu_i} $ for each $i$,   by  Pieri's rule, each irreducible in $Y$ has at most $\sum_{i=1}^m |T_i| =|T|-1$ columns.   
By the definition of $ \varphi_d^{Y_1,Y_2}$ given in Lemma~\ref{indlem2}, $$\langle \tilde g_r^t : t \in \mathcal T_{T,\mu}\rangle = \im \varphi_{|T|-1}^{Y,\sgn_{\mu(r)}}.$$  It follows that $$\tilde S^{T,\mu} \cong \ker \varphi_{|T|-1}^{Y,\sgn_{\mu(r)}}.$$
Hence by Lemma~\ref{indlem2}, every irreducible in $\tilde S^{T,\mu} $ is of the form $S^{\lambda \conc 1^{\mu(r)}}$, where $S^\lambda$ is an irreducible of $Y$ with exactly $|T|-1$ columns. Therefore the irreducibles of $\tilde S^{T,\mu} $ have  exactly $|T|$ columns and the last column has length $\mu(r)$.
\end{proof}

Our final result  of this section relates the two types of tree Specht modules and thereby completes the proof of the Stabilization Theorem.  Recall that $U \subsetsim V$ means that $U$  that is isomorphic to a submodule of $V$.
\begin{theorem} \label{2kindth} Let $\mu$ be a $T$-partition of $N$.  Then $S^{T,\mu} \subsetsim \tilde S^{T,\mu}$ with isomorphism when $T$ is a path.
\end{theorem}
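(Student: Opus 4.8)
The plan is to construct an explicit $\sg_N$-module homomorphism $\Phi \colon S^{T,\mu} \to \tilde S^{T,\mu}$, show it is well defined, and then show it is injective; the equality-when-$T$-is-a-path assertion follows since both sides are then isomorphic to $S^{\mu^*}$ by (\ref{isoeq1}) and (\ref{iso2eq}). Since both modules are quotients of the same space $M^{T,\mu}$ (by $G^{T,\mu}$ and $\tilde G^{T,\mu}$ respectively), the natural candidate is the map induced by the identity on $M^{T,\mu}$, which is well defined precisely when $G^{T,\mu} \subseteq \tilde G^{T,\mu}$. So the first and most important step is to prove this containment of submodules of $M^{T,\mu}$: every tree Garnir relation of the first kind $g^t_a$ lies in $\tilde G^{T,\mu}$.

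To prove $g^t_a \in \tilde G^{T,\mu}$, I would argue by downward induction on the depth of the node $a$ (equivalently, induction on $|D_a|$, the number of descendants of $a$), peeling off one level of the tree at a time. The key algebraic identity is the one already exploited in the proof of Theorem~\ref{newpresth} and encoded in Lemma~\ref{indlem2}: writing $p = p(a)$ for the parent of $a$, the second-kind relation $\tilde g^t_p$ exchanges an entry of $Col(p)$ with any entry of $\bigcup_{b \in D_p} Col(b) \supseteq \bigcup_{b \in D_a}Col(b)$, whereas the first-kind relation $g^t_a$ exchanges an entry of $Col(a)$ only with the \emph{top} entry of $Col(p)$. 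The strategy is to realize, inside the induction product decomposition $M^{T,\mu} = \sgn_{\mu(x_1)} \bullet \cdots \bullet \sgn_{\mu(x_{|T|})}$ with the nodes listed so that $a$ and its descendants come last, that modding $M^{T,\mu}$ by the relations $\tilde g^s_b$ for all $b \in D_p \cup \{p\}$ factors as an iterated application of the operator $\varphi_d$ of Lemma~\ref{indlem2}, and that the first-kind relations $g^s_b$ for these same $b$ are contained in the image of that composite operator. Concretely: by Lemma~\ref{indlem} (applied with $\lambda_1$ a single column $1^{\mu(p)}$ coming from $Col(p)$ and $\lambda_2$ the column shape from $Col(a)$, and $d=2$), the first-kind Garnir element $g^t_a$ is exactly $\varphi_2(\bar t)$ up to the normalization in (\ref{newgareq}) versus (\ref{gareq}); one then checks that the $d$-scaled first-kind relation sits inside $\langle \tilde g^s_a : s\rangle \subseteq \tilde G^{T,\mu}$ by the same eigenvalue/kernel bookkeeping used to prove Theorem~\ref{newpresth}. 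Carrying this out level by level from the leaves up to the root establishes $G^{T,\mu} \subseteq \tilde G^{T,\mu}$, hence $\Phi$ is well defined and surjective; since $\dim \tilde S^{T,\mu} \ge \dim S^{T,\mu}$ would be backwards, instead one concludes $\Phi$ is a surjection \emph{from} $M^{T,\mu}/G^{T,\mu}$ \emph{onto} $M^{T,\mu}/\tilde G^{T,\mu}$ — wait: the containment $G^{T,\mu}\subseteq\tilde G^{T,\mu}$ gives a surjection $S^{T,\mu}\twoheadrightarrow \tilde S^{T,\mu}$, the wrong direction for "$\subsetsim$".

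Because of that, the actual map must go the other way, and the real content is that the first-kind relations are \emph{strong enough} to recover the second-kind ones in a suitable completed sense — but that is false in general dimension count, so instead $S^{T,\mu}$ embeds in $\tilde S^{T,\mu}$ via a nontrivial (non-identity) homomorphism. The approach I would take here is representation-theoretic rather than presentation-theoretic: compare characters. By Theorem~\ref{col2th}, every irreducible of $\tilde S^{T,\mu}$ has exactly $|T|$ columns; the goal (Theorem~\ref{firstkindth}) is to show the same for $S^{T,\mu}$, and Theorem~\ref{2kindth} is the vehicle. So I would instead prove directly, by induction on $|T|$ paralleling the proof of Theorem~\ref{col2th}, that $S^{T,\mu} \cong \ker \psi$ for a suitable operator $\psi$ on $Y \bullet \sgn_{\mu(r)}$ where $Y = S^{T_1,\mu_1}\bullet\cdots\bullet S^{T_m,\mu_m}$, using that the root Garnir relation of the first kind only pairs $Col(r)$ with the \emph{tops} of the $Col(c_i)$, $c_i$ a child of $r$. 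This root-level first-kind relation, combined with the inductively-known first-kind relations inside each $S^{T_i,\mu_i}$, should generate the same submodule as $\im\varphi_{|T|-1}$ restricted appropriately, giving $S^{T,\mu}\subsetsim (Y\bullet\sgn_{\mu(r)})/\im\varphi_{|T|-1}^{Y,\sgn_{\mu(r)}} \cong \ker\varphi_{|T|-1}^{Y,\sgn_{\mu(r)}}$, which is a submodule of $\tilde S^{T,\mu}$ by the argument in Theorem~\ref{col2th}. When $T$ is a path, $Y$ is itself a single ordinary Specht module (an irreducible), the "top entry" and "any entry" distinctions collapse because each column has a unique predecessor column, and Lemma~\ref{indlem} gives an exact kernel, so the containment is an equality — recovering (\ref{isoeq1})$=$(\ref{iso2eq}).

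The main obstacle I anticipate is precisely reconciling the first-kind relations (which only involve the \emph{top} entry of the parent column) with the operator $\varphi_d$ from Lemma~\ref{indlem2} (which symmetrically involves \emph{all} entries of both blocks): a priori $G^{T,\mu}$ is a proper subspace of $\tilde G^{T,\mu}$, so $S^{T,\mu}$ genuinely surjects onto $\tilde S^{T,\mu}$ and the embedding in the other direction is not tautological. The resolution should be that, at each node, the extra relations in $\tilde G$ beyond those in $G$ are already consequences of the $G$-relations at \emph{that} node together with $G$-relations at the descendant nodes — this is exactly the kind of statement Fulton's presentation (Proposition~\ref{presentprop}) and our Theorem~\ref{newpresth} establish for a single path, and the tree case should follow by grafting those path arguments along each root-to-node chain. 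Making the bookkeeping of which relations generate which submodule precise, uniformly over the branching structure of $T$, is where the work lies; everything else is an induction on $|T|$ that mirrors the already-written proof of Theorem~\ref{col2th}.
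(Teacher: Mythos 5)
Your second approach (after correctly rejecting the naive ``$G^{T,\mu}\subseteq\tilde G^{T,\mu}$'' idea and noting it gives a surjection in the wrong direction) is in the right spirit and agrees with the paper's strategy for one of its two cases: induct on $|T|$, peel the root off, and when the root has a \emph{single} child $b$, reduce to the statement that modding $S^{T',\mu'}\bullet\sgn_{\mu(r)}$ by the first-kind root Garnir relations gives $\sum_\lambda d_\lambda S^{\lambda\conc 1^{\mu(r)}}$, via Fulton's presentation (Proposition~\ref{presentprop}) on the first-kind side and Theorem~\ref{newpresth} / Lemma~\ref{indlem} on the second-kind side, then compare multiplicities $d_\lambda\le\tilde d_\lambda$ from the induction hypothesis. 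You also correctly identify the path case as giving an isomorphism by (\ref{isoeq1}) and (\ref{iso2eq}).

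The gap is the multi-child root. Your key claim --- that ``the root-level first-kind relation, combined with the inductively-known first-kind relations inside each $S^{T_i,\mu_i}$, should generate the same submodule as $\im\varphi_{|T|-1}$'' --- is exactly the statement that needs proving when $r$ has $m\ge 2$ children, and it does not follow ``by grafting path arguments along each root-to-node chain.'' The second-kind root relation $\tilde g^t_r$ couples $Col(r)$ simultaneously with \emph{all} entries of \emph{all} descendant columns across all branches, while the first-kind relations $g^t_{b_i}$ couple $Col(b_i)$ only with the top of $Col(r)$, one branch at a time; reconciling these is not a per-branch path computation. The paper's Case~2 handles this by introducing, for each branch, the adjoined single-child tree $T_i^r = T_i\cup\{r\}$, to which Case~1 and the induction hypothesis apply, giving $\tilde G^{T,\mu}_i\subsetsim G^{T,\mu}_i$ inside $M^{T,\mu}$ via the induction-product factorizations (\ref{subtreeeq1})--(\ref{subtreeeq2}); it then uses the explicit decomposition $\tilde g^t_r = \sum_{i=1}^m\tilde g^t_{r,i}$ to conclude $\tilde G^{T,\mu}\subseteq\sum_i\tilde G^{T,\mu}_i\subsetsim\sum_i G^{T,\mu}_i = G^{T,\mu}$ and hence $S^{T,\mu}\subsetsim\tilde S^{T,\mu}$. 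This per-branch splitting of the root relation, and the recombination across branches, is absent from your sketch; without it the inductive step does not close. You do flag that ``making the bookkeeping precise, uniformly over the branching structure of $T$, is where the work lies,'' and indeed that bookkeeping \emph{is} the content of the theorem --- so this is a recognized but unfilled gap, not a wrong approach.
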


\begin{proof} The isomorphism follows from (\ref{isoeq1}) and (\ref{iso2eq}).  We prove the inclusion by induction on the number of nodes of $T$ and split the argument into two cases.

{\bf Case 1:} The root $r$ of $T$ has only one child $b$.  Let $T^\prime$ be the tree obtained from $T$ by removing $r$ and let $\mu^\prime$ be $\mu$ restricted to $T^\prime$.

We have 
 \begin{equation} \label{decSmueq} \tilde S^{T,\mu} = (\tilde S^{T^\prime, \mu^\prime} \bullet \sgn_{\mu(r)} )/ \langle \tilde g_r^t : t \in \mathcal T_{T, \mu}
\rangle \end{equation} 

By Theorem~\ref{col2th}, the decomposition of $\tilde S^{T^\prime, \mu^\prime}$  into irreducibles is of the form,
\begin{equation} \label{decompsec6eq} \tilde S^{T^\prime, \mu^\prime} \cong \sum_{\lambda \in D} \tilde d_\lambda S^\lambda,\end{equation} where $\tilde d_\lambda$ is a nonnegative integer and $D$ is the set  all Young diagrams $\lambda \vdash N- \mu(r)$ whose number of columns is $|T|-1$ and  last column length  is $\mu(b)$.  Therefore, since the virtual representations of the symmetric groups form a ring under  sum and induction product, by (\ref{decSmueq})
\begin{equation*} \tilde S^{T,\mu} \cong \sum_{\lambda \in D} \tilde d_\lambda (( S^\lambda \bullet \sgn_{\mu(r)}) /  \langle \tilde g_{|T|}^t : t \in \mathcal T_{\lambda\conc 1^{\mu(r)}}
\rangle),
\end{equation*} 
where $\tilde g_{|T|}^t$ is as  defined in  (\ref{newgareq}).
It now follows from  Theorem~\ref{newpresth} that 
\begin{align*} \tilde S^{T,\mu} &\cong \sum_{\lambda \in D} \tilde d_\lambda (( \tilde S^\lambda \bullet \sgn_{\mu(r)}) /  \langle \tilde g_{|T|}^t : t \in \mathcal T_{\lambda\conc 1^{\mu(r)}} \rangle).
\\ &\cong \sum_{\lambda \in D} \tilde d_\lambda  \tilde S^{\lambda\conc 1^{\mu(r)}}
\\ &\cong \sum_{\lambda \in D} \tilde d_\lambda   S^{\lambda\conc 1^{\mu(r)}}.
\end{align*} 

By  the induction hypothesis and (\ref{decompsec6eq}), we have 
that $$ S^{T^\prime, \mu^\prime} \cong \sum_{\lambda \in D} d_\lambda S^\lambda,$$ for some $d_\lambda$ satisfying $0 \le d_\lambda \le \tilde d_\lambda$. It follows that
\begin{align*} S^{T,\mu} &\cong S^{T^{\prime},\mu^\prime} \bullet \sgn_{\mu(r)}/\langle g_{b}^t : t \in \mathcal T_{T,\mu}
\rangle 
\\ &\cong \sum_{\lambda \in D} d_\lambda ((S^\lambda \bullet \sgn_{\mu(r)}) /  \langle g_{|T|-1}^t : t \in \mathcal T_{\lambda\conc 1^{\mu(r)}}
\rangle),\end{align*} 
 where $g_{|T|}^t$ is as  defined in (\ref{gareq}), since $\lambda \in D$ means $\lambda$ has $|T|-1$ columns and its last column has length $\mu(b) \ge \mu(r)$. 
By Fulton's presentation (Proposition~\ref{presentprop}) for Specht modules, we now have
\begin{equation*} \label{D1decompeq} S^{T,\mu} \cong  \sum_{\lambda\in D} d_\lambda S^{\lambda\conc1^{\mu(r)}}\subseteq \sum_{\lambda\in D} \tilde d_\lambda S^{\lambda\conc1^{\mu(r)}} \cong \tilde S^{T,\mu},\end{equation*}
which completes the proof for Case 1.

{\bf Case 2:} The root $r$ of $T$ has children $b_1,\dots,b_m$, where $m \ge 2$.  For each $i$, let $T_i$ be the subtree of $T$ rooted at $b_i$ and let $\mu_i$ be the restriction of $\mu$ to $T_i$.
Let $T_i^r$ be the subtree $T_i \cup \{r\}$ of $T$ and let $\mu_i^r$ be the restriction of $\mu$ to $T_i^r$. Let  $G^{T,\mu}_i$ be the submodule of $M^{T,\mu}$ generated by the Garnir relations of the first kind that involve $b_i$ or any of its descendants, i.e. $$G^{T,\mu}_i = \langle g^t_c: c \in T_i, t \in \mathcal T_{T,\mu}  \rangle .$$ 
Clearly $$\sum_{i=1}^m G^{T,\mu}_i = G^{T,\mu}$$   
It is not difficult to see that
\begin{equation} \label{subtreeeq1} M^{T,\mu}/G^{T,\mu}_i \cong M^{T_1,\mu_1} \bullet \,\, \cdots \,\, \bullet M^{T_{i-1},\mu_{i-1} }\bullet S^{T^r_i,\mu^r_i} \bullet  M^{T_{i+1},\mu_{i+1}} \bullet \,\, \cdots \,\, \bullet M^{T_{m},\mu_{m} }.\end{equation}

Now for $t \in \mathcal T_{T,\mu}$ and internal node $c \in T^r_i$, define
$$\tilde g_{c,i}^{t} := \begin{cases} \tilde g_c^t & \mbox{if } c \ne r \\
|T_i| \mu(r) \bar t - \sum_{s\in A_i} \bar s & \mbox{if } c = r,
\end{cases}$$
where $A_i$ is the set of  $\mu$-tableaux  that can be obtained from $t$ by switching any entry of  $Col(r)$ with any entry of $Col(a)$ for any node $a$ in $T_i$ other than the root $r$. Let $$\tilde G^{T,\mu}_i = \langle \tilde g^{t}_{c,i} : c \mbox{ is an internal node of }  T_i^r, t \in \mathcal T_{T,\mu} \rangle.$$  Note that
\begin{equation} \label{subtreeeq2} M^{T,\mu}/\tilde G^{T,\mu}_i  \cong   M^{T_1,\mu_1} \bullet \,\, \cdots \,\, \bullet M^{T_{i-1},\mu_{i-1} }\bullet \tilde S^{T^r_i,\mu^r_i} \bullet  M^{T_{i+1},\mu_{i+1}} \bullet \,\, \cdots \,\, \bullet M^{T_{m},\mu_{m} }.\end{equation} 
It follows from Case 1 or the induction hypothesis that $S^{T^r_i,\mu^r_i} \subsetsim \tilde S^{T^r_i,\mu^r_i}$.  Thus by (\ref{subtreeeq1}) and (\ref{subtreeeq2})   $$M^{T,\mu}/G^{T,\mu}_i   \subsetsim M^{T,\mu}/\tilde G^{T,\mu}_i .$$   This means that $\tilde G^{T,\mu}_i  \subsetsim G^{T,\mu}_i $ for all $i$.  We therefore have
\begin{equation} \label{Gcontaineq} \sum_{i=1}^m \tilde G^{T,\mu}_i  \subsetsim \sum_{i=1}^m  G^{T,\mu}_i  = G^{T,\mu} .\end{equation}
We claim  that for all $t \in \mathcal T_{T,\mu}$ and internal nodes $c \in T$,
$$\tilde g_c^t = \begin{cases} \tilde g^{t}_{c,i} & \mbox{if  }\,\, c\in T_i \\
\sum_{i=1}^m \tilde g^{t}_{r,i}  & \mbox{if } \,\,c= r.
\end{cases}
$$
Indeed, the first case follows immediately from the definitions.  For the second case,
we have 
\begin{align*} \sum_{i=1}^m \tilde g^{t}_{r,i} &= \sum_{i=1}^m \left( |T_i|  \mu(r) \bar t - \sum_{s \in A_i} \bar s \right) 
\\ &= \sum_{i=1}^m |T_i| \mu(r) \bar t - \sum_{i=1}^m \sum_{s \in A_i} \bar s
\\ &= (|T| - 1) \mu(r) \bar t - \sum_{s \in \bigcup A_i} \bar s
\\ &= \tilde g^t_r 
\end{align*} 

From this and (\ref{Gcontaineq}) we can conclude that $\tilde G^{T,\mu}  \subsetsim \sum_{i=1}^m \tilde G^{T,\mu}_i \subsetsim G^{T,\mu}$, which implies that
$$S^{T,\mu} = M^{T,\mu}/G^{T,\mu} \subsetsim M^{T,\mu}/\tilde G^{T,\mu}  = \tilde S^{T,\mu}.$$
Therefore the induction step is complete. 
\end{proof}

\begin{proof}[Proof of Theorem~\ref{firstkindth}] This is an immediate consequence of Theorems~\ref{2kindth} and~\ref{col2th}.
\end{proof}

\subsection{Recap of proof of Stabilization Theorem}

Let $n \ge k$ and let $T \in \mathbb T_{n,k}$ be an increasing tree.   By Lemma~\ref{mindeglem} every internal node of $T$ has a leaf child.  From Lemma~\ref{treetablem} and Theorem~\ref{2kindth}, we have the inclusions
$$\rho_T/ (\rho_T \cap \rho_{D(T)}) \subsetsim S^{\hat T, \mu_T} \subsetsim \tilde S^{\hat T, \mu_T},$$
where $\hat T$ is obtained from $T $ by removing its leaves.  
Since by Theorem~\ref{col2th}, the irreducibles in $\tilde S^{\hat T, \mu_T}$ have exactly $|\hat T|$ columns, the same is true  for $S^{\hat T, \mu_T}$ and for the quotient $\rho_T/ (\rho_T \cap \rho_{D(T)}) $.  Since $|\hat T|$ is equal to the number of internal nodes of $T$, which is $k$, the Quotient Lemma (Lemma~\ref{quotlem}) holds.  Recall that the only thing left to do in the   induction  proof of Theorem~\ref{rhocolth}, 
 was to prove the Quotient Lemma. Therefore Theorem~\ref{rhocolth} is now proved.  This implies that 
  $\gamma_{n,k} $ in Theorem~\ref{decomposition_thm_2} is $(0)$, which   means that $\rho_{n,k} = \beta_{n,k}$, as asserted by the Stabilization Theorem.

\section{The $k=4$ case and further considerations} \label{fursec}
\subsection{Decomposition for k=4} With the Stabilization Theorem now proved and the decomposition for $k=3$ established, we are also now able to fill in the  $k=4$ column of Table~\ref{decomptable}.  Using a computer program 
written in C++, we found that $\dim \rho_{3,4}= 1077$.  This is used in our proof of the following result.

\begin{theorem} \label{decomp4} For all $n \ge 3$, the $\sg_{4n-3}$-module $\rho_{n,4}$ decomposes as 
$$ \rho_{n,4} \cong    S^{4^{n-1}1} \oplus  S^{4^{n-2}32} \oplus   S^{4^{n-2} 31^2} \oplus    S^{4^{n-2} 2^21} \oplus S^{4^{n-2} 21^3}    \oplus S^{4^{n-3} 3^21^3} \oplus S^{4^{n-3}32^3} .$$ 
\end{theorem}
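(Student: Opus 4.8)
The plan is to reduce the statement to the case $n=3$ using the Stabilization Theorem (Theorem~\ref{introstabth}): for $n\ge 4$ we have $\rho_{n,4}\cong\beta_{n,4}$, and iterating the operation ``adjoin a row of length $4$ to the top of every Young diagram in the decomposition'', starting from $\rho_{3,4}$, produces precisely the displayed decomposition, since under this operation the seven partitions listed for $n$ pass to the seven partitions listed for $n+1$ (e.g. $4^{n-1}1\mapsto 4^{n}1$ and $4^{n-3}3^21^3\mapsto 4^{n-2}3^21^3$, and so on). So it suffices to establish
$$\rho_{3,4}\cong S^{4^21}\oplus S^{432}\oplus S^{431^2}\oplus S^{42^21}\oplus S^{421^3}\oplus S^{3^21^3}\oplus S^{32^3}.$$

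For $n=3$ I would first pin down the $4$-column part. By Theorem~\ref{decomposition_thm}, $\rho_{3,4}\cong\beta_{3,4}\oplus\gamma_{3,4}$ with every irreducible of $\gamma_{3,4}$ having at most $3$ columns; and since $\beta_{3,4}$ is built from $\rho_{2,4}$ by adjoining a row of length $4$ to each diagram, every irreducible of $\beta_{3,4}$ has exactly $4$ columns. Thus $\beta_{3,4}$ is exactly the $4$-column part of $\rho_{3,4}$ and $\gamma_{3,4}$ is the part with at most $3$ columns. Now $\rho_{2,4}=\lie_5$, and the Kraskiewicz--Weyman theorem (Theorem~\ref{liek}) with $i=1$, that is, counting standard Young tableaux of each shape $\lambda\vdash 5$ with major index $\equiv 1\bmod 5$, gives $\rho_{2,4}\cong S^{41}\oplus S^{32}\oplus S^{31^2}\oplus S^{2^21}\oplus S^{21^3}$, so $\beta_{3,4}\cong S^{4^21}\oplus S^{432}\oplus S^{431^2}\oplus S^{42^21}\oplus S^{421^3}$, of dimension $84+168+216+216+189=873$ by the hook length formula. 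Using the computer value $\dim\rho_{3,4}=1077$ we get $\dim\gamma_{3,4}=204$.

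The crux is to identify $\gamma_{3,4}$ from this, and dimension alone does not suffice, since there are several ways to write $204$ as a sum of dimensions of irreducibles of $\sg_9$ with at most $3$ columns. The additional leverage comes from the decomposition of $\rho_{3,3}$, via the observation that $\rho_{n,k}$ is a quotient of $\rho_{n,k-1}\bullet\sgn_{n-1}$: a comb with $k$ brackets has the form $[C',a_1,\dots,a_{n-1}]$ with $C'$ a comb with $k-1$ brackets, so factoring the space of combs (which spans $\rho_{n,k}$ by Proposition~\ref{coneprop}) first by the Jacobi relations internal to $C'$ yields $\rho_{n,k-1}\bullet\sgn_{n-1}$, and then by the remaining Jacobi relations yields $\rho_{n,k}$. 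For $n=3$, $k=4$ this gives a surjection $\rho_{3,3}\bullet\sgn_2\to\rho_{3,4}$, and $\rho_{3,3}\cong S^{3^21}\oplus S^{321^2}$ by Theorem~\ref{k=3}. Expanding $(S^{3^21}\oplus S^{321^2})\bullet\sgn_2$ by Pieri's rule, the only constituents with at most $3$ columns are $S^{3^221},S^{3^21^3},S^{32^3},S^{32^21^2},S^{321^4}$, occurring with multiplicities at most $2,2,1,1,1$ respectively; hence these are the only candidate constituents of $\gamma_{3,4}$, with the same multiplicity bounds. Their dimensions are $168,120,84,162,105$, and the only combination respecting the bounds that sums to $204$ is $120+84$. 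Therefore $\gamma_{3,4}\cong S^{3^21^3}\oplus S^{32^3}$, which completes the case $n=3$ and hence the theorem.

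The main obstacle is exactly this last identification: the reduction to $n=3$, the Kraskiewicz--Weyman computation of $\rho_{2,4}$, and the hook-length bookkeeping are routine, but the step that makes the argument work is the quotient map $\rho_{3,3}\bullet\sgn_2\to\rho_{3,4}$, which shrinks the list of possible constituents of $\gamma_{3,4}$ enough that the resulting small Diophantine problem has a unique solution. Verifying the Pieri expansion and the multiplicity bounds carefully, in particular that no constituent outside the five listed shapes can occur, is where the care is needed.
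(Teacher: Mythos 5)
Your proposal is correct and follows essentially the same route as the paper: reduce to $n=3$ via the Stabilization Theorem, pin down $\beta_{3,4}$ from Kraskiewicz--Weyman and the hook-length formula ($\dim=873$), subtract from the computer value $\dim\rho_{3,4}=1077$ to get $\dim\gamma_{3,4}=204$, then use Proposition~\ref{coneprop} and Theorem~\ref{k=3} to realize $\gamma_{3,4}$ inside the at-most-three-column part of $(S^{3^21}\oplus S^{321^2})\bullet\sgn_2$, and finish by the Pieri multiplicity bounds and dimension count. (Incidentally, your value $\dim S^{32^21^2}=162$ is correct; the paper's proof misprints it as $108$, though the conclusion of the dimension count is unaffected either way.)
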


\begin{proof} We begin by proving the result for $n=3$, which is,
\begin{equation} \label{rho34eq}  \rho_{3,4} \cong    S^{4^21} \oplus  S^{432} \oplus   S^{431^2} \oplus    S^{42^21} \oplus S^{421^3}    \oplus S^{3^21^3} \oplus S^{32^3} .\end{equation}
 From the  Kraskiewicz-Weyman decomposition (Theorem~\ref{liek}) we obtain
$$\rho_{2,4}  \cong    S^{4^11} \oplus  S^{32} \oplus   S^{31^2} \oplus    S^{2^21} \oplus S^{21^3}.$$ It follows that
$$ \beta_{3,4} \cong S^{4^21}\oplus  S^{432} \oplus   S^{431^2} \oplus    S^{42^21} \oplus S^{421^3}.$$
By  the well-known hook length formula (c.f. \cite{Fu,St2}) we have $\dim \beta_{3,4} = 873$.  As was mentioned above, our computer calculation gave $\dim \rho_{3,4}= 1077$.  Hence    by Theorem~\ref{decomposition_thm_2}, 
$$\rho_{3,4} \cong S^{4^21}\oplus  S^{432} \oplus   S^{431^2} \oplus    S^{42^21} \oplus S^{421^3} \oplus  \gamma_{3,4},$$ where $\gamma_{3,4}$ is an $\sg_{9}$-module of dimension $204$ that decomposes into  irreducibles whose Young diagrams have fewer than $4$ columns.  We need only show that $\gamma_{3,4} \cong S^{3^21^3} \oplus S^{32^3}$, which by the hook length formula has dimension $204$.

By Proposition~\ref{coneprop}, we have that $\rho_{3,4}$ is isomorphic to a submodule of  the induction product $\rho_{3,3} \bullet \sgn_2$.  Therefore by Theorem~\ref{k=3},  $\rho_{3,4}$ is isomorphic to a submodule of $$(S^{3^21} \oplus S^{321^2}) \bullet \sgn_2 \cong S^{3^21} \bullet \sgn_2 \, \oplus \, S^{321^2} \bullet \sgn_2.$$  By Pieri's rule,   this module decomposes into $$\nu \oplus (S^{3^221} \oplus S^{3^21^3}) \oplus (S^{3^221} \oplus S^{3^21^3} \oplus S^{32^3} \oplus S^{32^21^2} \oplus S^{321^4}),$$
where $\nu$ is an $\sg_9$ module whose irreducibles have $4$ columns.  It follows that $\gamma_{3,4}$ is a submodule of 
$$2S^{3^221} \oplus 2S^{3^21^3} \oplus S^{32^3} \oplus S^{32^21^2} \oplus S^{321^4}.$$  By the hook length formula,
the dimensions of these irreducibles are $168, 120, 84, 108, 105$, respectively.  The only way to get a submodule of dimension $204$ from these irreducibles  is by taking $S^{3^21^3} \oplus S^{32^3}$.  Therefore 
$$\gamma_{3,4} = S^{3^21^3} \oplus S^{32^3},$$
which means that (\ref{rho34eq}) holds.

The result for general $n$ now follows from the Stabilization Theorem.
\end{proof}

\subsection{Above the diagonal} The Stabilization Theorem addresses what happens at or below the $n=k$ diagonal.  We conclude this section with an observation on what happens above the diagonal and a  conjectural converse, which generalizes  Conjecture~4.2 of \cite{FHSW1}.
 
 \begin{theorem} For all $n \le k$, if  $\rho_{n,k}$ has an irreducible with  exactly $j$ columns then  $n \le j \le k$.
 \end{theorem}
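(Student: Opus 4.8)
The plan is to prove the two bounds separately. The upper bound $j\le k$ is immediate: it is exactly Proposition~\ref{lekprop}, which holds for all $n$ and $k$. So the real content is the lower bound, which I would restate as: \emph{if $n\le k$ and $S^\lambda$ is an irreducible contained in $\rho_{n,k}$, then the number of columns of $\lambda$, namely $\lambda_1$, is at least $n$.} I would prove this by induction on $k$, splitting into the cases $n=k$ and $n<k$ (the cases $n\le 1$ being trivial, since $\rho_{1,k}=S^1$).

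For $n=k$ the Stabilization Theorem (Theorem~\ref{stabth}) applies and gives $\rho_{k,k}\cong\beta_{k,k}$. By the definition of $\beta_{k,k}$, its irreducibles are obtained from the irreducibles $S^\nu$ of $\rho_{k-1,k}$ by prepending a row of length $k$ to $\nu$; since Proposition~\ref{lekprop} forces $\nu_1\le k$, this prepending produces a genuine partition whose largest part is exactly $k$. Hence every irreducible of $\rho_{k,k}$ has exactly $k$ columns, and the claim holds. This also covers the base case of the induction, so effectively I only need to induct within the range $n<k$.

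For $n<k$, i.e. $n\le k-1$, I would use Proposition~\ref{coneprop}: the combs span $\rho_{n,k}$, and the assignment sending a comb with $k-1$ brackets, together with the $n-1$ additional leaves of the outermost bracket, to the corresponding comb with $k$ brackets defines --- after inducing the $\sg_{(k-1)(n-1)+1}\times\sg_{n-1}$-action --- a surjection $\rho_{n,k-1}\bullet\sgn_{n-1}\twoheadrightarrow\rho_{n,k}$ of $\sg_{k(n-1)+1}$-modules. It is well defined because $n$-linearity turns any antisymmetry or Jacobi relation among the ``inner'' bracketings into the zero relation once it is bracketed with the $n-1$ outer leaves. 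Working over $\C$, this surjection splits, so $\rho_{n,k}$ is isomorphic to a submodule of $\rho_{n,k-1}\bullet\sgn_{n-1}$. By Pieri's rule, every irreducible of $\rho_{n,k-1}\bullet\sgn_{n-1}$ is of the form $S^\nu$ with $\nu\supseteq\mu$ as Young diagrams for some irreducible $S^\mu$ of $\rho_{n,k-1}$; in particular $\nu_1\ge\mu_1$. Since $n\le k-1$, the induction hypothesis gives $\mu_1\ge n$, hence $\nu_1\ge n$, and combined with $\nu_1\le k$ from Proposition~\ref{lekprop} the induction step is complete.

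The step I expect to require the most care is the construction in the last paragraph: verifying that the ``comb-restriction'' map $\rho_{n,k-1}\bullet\sgn_{n-1}\to\rho_{n,k}$ is genuinely well defined (all relations of $\rho_{n,k-1}$ must die under it) and surjective. If one prefers to sidestep a formal map, the same conclusion follows by observing that the cyclic submodule of $\rho_{n,k}$ generated by a single comb is a quotient of $U\bullet\sgn_{n-1}$ for a suitable submodule $U$ of $\rho_{n,k-1}$, and then summing over all combs (which span $\rho_{n,k}$ by Proposition~\ref{coneprop}). Everything else --- Proposition~\ref{lekprop} for the upper bound, the Stabilization Theorem together with the definition of $\beta_{k,k}$ for $n=k$, and Pieri's rule for the inductive step --- is routine bookkeeping.
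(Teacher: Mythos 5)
Your proof is correct and follows essentially the same route as the paper: the upper bound from Proposition~\ref{lekprop}, the base case $n=k$ from the Stabilization Theorem (which the paper invokes via the equivalent Theorem~\ref{rhocolth}), and the inductive step via the comb-restriction surjection $\rho_{n,k-1}\bullet\sgn_{n-1}\twoheadrightarrow\rho_{n,k}$ from Proposition~\ref{coneprop} followed by Pieri's rule. The extra detail you supply on why that surjection is well defined (multilinearity of the bracket killing any relation once it is bracketed with the $n-1$ outer leaves) is a useful elaboration of a step the paper states without justification.
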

 
 \begin{proof}  We prove this by induction on $k$.    Suppose $\rho_{n,k}$ has an irreducible $S^\lambda$ with exactly $j$ columns.  Then by Proposition~\ref{lekprop}, $j \le k$.  
 
 We must now show that $n \le j$.  If $k=n$ then by Theorem~\ref{rhocolth}, we have $j =k =n$ as desired.  So assume $k >n$.  Since the combs span $\rho_{n,k}$ (Proposition~\ref{coneprop}), we have that $\rho_{n,k}$ is isomorphic to a submodule of the  induction product $\rho_{n,k-1} \bullet \sgn_{n-1}$.  It follows that  $S^{\lambda}$ is in the induction product.  Therefore by Pieri's rule, $\rho_{n,k-1}$ has an irreducible with $h$ columns where $h \in \{ j-1,j\}$.  Since $k >n$, we have $k-1 \ge n$, which means we can apply the induction hypothesis to  $\rho_{n,k-1}$ and conclude that $n\le h \le j $, as desired.
  \end{proof}

\begin{conjecture} If $n \le j \le k$ then there is an irreducible in $\rho_{n,k}$ that has exactly $j$ columns.  \end{conjecture}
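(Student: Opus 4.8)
\medskip

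\noindent\emph{Proof proposal.} The case $j=k$ needs nothing new: by Theorem~\ref{noncombth}(1) the Specht module $S^{k^{n-1}1}$ occurs in $\rho_{n,k}$ and it has exactly $k$ columns. For $n\le j<k$, Theorem~\ref{decomposition_thm_2} shows the statement is equivalent to Conjecture~\ref{lowconj}, so a proof along the lines below would settle both at once: one must exhibit an irreducible with exactly $j$ columns inside $\rho_{n,k}$ itself, since the summand $\beta_{n,k}$ contributes only $k$-column irreducibles. The natural route is to locate such an irreducible in a single layer $\rho_T/(\rho_T\cap\rho_{D(T)})$ of the depth-vector filtration of Section~\ref{treequotsec}, and the trees to use are the increasing $T\in\mathbb T_{n,k}$ that are \emph{deficient}, i.e.\ have some internal node with no leaf-child; these exist precisely because $k>n$ (compare Lemma~\ref{mindeglem}) and, as the proof of the Stabilization Theorem shows, they are the only mechanism by which fewer-than-$k$-column irreducibles can enter $\rho_{n,k}$.

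The concrete plan has three steps, mirroring the $k=3$ argument of Theorem~\ref{keq3th}. First, for the prescribed value $j$, choose a deficient increasing tree $T\in\mathbb T_{n,k}$ whose position in the depth-vector order is convenient (for instance a lexicographically minimal $T$, for which $D(T)=\emptyset$ and the layer is simply $\rho_T$); the module $V_T$ of shape-$T$ bracketed permutations modulo antisymmetry is then an iterated induction product of composition products $\sgn_m[\sgn_n]$ coming from the all-internal nodes and sign modules coming from the leaf-children, and $\rho_T$ is the quotient of $V_T$ by the generalized Jacobi relations supported on the deficient nodes. Second, decompose $V_T$ into irreducibles using Macdonald's plethysm expansion of $\sgn_m[\sgn_n]$ (as in \cite[I8, Ex.\ 9]{Ma}) together with Pieri and the Littlewood--Richardson rule, and verify that a $j$-column constituent is present; the column spread of $V_T$ reaches down to $j$ exactly when $T$ has the right amount of branching, and a bookkeeping argument should identify the admissible range of $j$ realized in this way as $[n,k]$. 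Third -- and this is the heart of the matter -- show that at least one copy of a $j$-column constituent of $V_T$ survives in the quotient $\rho_T$, i.e.\ is neither in the image of the Jacobi operator on $V_T$ nor (for non-minimal $T$) absorbed into $\rho_{D(T)}$.

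The third step is the genuine obstacle and is strictly harder than in the $k=3$ case. There the Jacobi operator was a single operator $\varphi_2$ of Lemma~\ref{indlem} acting on $\sgn_2[\sgn_n]\bullet\sgn_{n-2}$, so its kernel -- and hence the quotient -- was computed outright. For $k\ge4$ the Jacobi relations at a node with several internal children and further structure below produce a sum of several $\varphi_d$-type operators that need not commute, so Lemma~\ref{indlem2} does not apply directly; moreover composition products do not have a uniform column count, so one must track exactly which plethystic constituents carry the $j$-column piece and prove that such a piece is not killed. I expect the crux to be a refined kernel/eigenvalue formula for an operator of the shape $c\,I-\sum_{i,j}(i,j)$ acting on an induction product one of whose factors is a plethysm $\sgn_m[\sgn_n]$; with such a formula in hand the rest should follow the template of Section~\ref{ncsec} and would in fact pin down the full column statistic of $\rho_{n,k}$, not merely its range. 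Failing a clean formula, an alternative is an explicit straightening argument producing by hand a nonzero $j$-column highest-weight vector in $\rho_T$, generalizing the construction of the generator $R_\alpha$ in the proof of Theorem~\ref{keq3th}.
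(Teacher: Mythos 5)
This statement is a \emph{conjecture} in the paper, not a theorem: the authors prove it only in the special cases $n=2$ (via Theorem~\ref{liek}) and $1\le k\le 4$ (via Table~\ref{knowtable}), and otherwise leave it open. There is therefore no paper proof against which to compare your attempt. What you have written is itself a proof \emph{plan}, not a proof -- you say so explicitly in the third step ("this is the genuine obstacle"), and phrases such as "a bookkeeping argument should identify," "I expect the crux to be," and "failing a clean formula, an alternative is" signal that the essential lemma is missing. The plan therefore cannot be accepted as a resolution of the conjecture.

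As a plan, the framing is correct and aligns with the paper's viewpoint. You are right that the $j=k$ case is disposed of by Theorem~\ref{noncombth}(1), that for $n\le j<k$ the statement is equivalent to Conjecture~\ref{lowconj} because every constituent of $\beta_{n,k}$ has exactly $k$ columns, and that the only route to sub-$k$-column irreducibles is through deficient trees $T$ (internal nodes with no leaf-child), which exist precisely when $k>n$ by the contrapositive of Lemma~\ref{mindeglem}. You also correctly identify where the $k=3$ argument of Theorem~\ref{keq3th} breaks down: at a deficient node the Jacobi relation contributes more than one $\varphi_d$-type operator, these need not commute, Lemma~\ref{indlem2} does not apply to an induction-product factor that is a plethysm $\sgn_m[\sgn_n]$, and one must show a $j$-column constituent of $V_T$ survives both the Jacobi quotient and the absorption into $\rho_{D(T)}$. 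The missing ingredient is exactly the refined kernel/eigenvalue computation (or the explicit highest-weight-vector construction) you call for. Absent that, steps two and three remain conjectural, and your proposal proves nothing beyond what Theorems~\ref{noncombth} and~\ref{decomposition_thm_2} already give. One further caveat: your step two asserts that one can choose a lexicographically minimal deficient increasing $T$ with $D(T)=\emptyset$; in the paper the lex-minimal depth vector in $\mathbb T_{n,k}$ is realized only by trees that are automatically increasing (Lemma~\ref{depveclem}), but nothing guarantees that a tree realizing a \emph{given} $j$ can be chosen to be simultaneously deficient, increasing, and lex-minimal, so the filtration interaction cannot be dodged in general and must be confronted.
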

  
  The conjecture is true when $n =2$.  This can be proved by using the result of Kraskiewicz and Weyman given in Theorem~\ref{liek}.  We leave this as an exercise for the reader.  The conjecture is also true when
  $1 \le k \le 4$ as one can see from our results displayed Table~\ref{knowtable}.  

\vspace{.1in} \no \large {\bf Acknowledgements}
\normalsize
\\
We thank Richard Stanley and Vic Reiner for helpful discussions.

\end{document}